\documentclass[oneside,english]{amsart}
\usepackage[T1]{fontenc}
\usepackage[latin9]{inputenc}
\usepackage{geometry}
\geometry{verbose,tmargin=2.5cm,bmargin=2.5cm,lmargin=2.5cm,rmargin=2.5cm}
\usepackage[active]{srcltx}
\usepackage{babel}
\usepackage{longtable}
\usepackage{amstext}
\usepackage{amsthm}
\usepackage[unicode=true,pdfusetitle,
 bookmarks=true,bookmarksnumbered=false,bookmarksopen=false,
 breaklinks=false,pdfborder={0 0 1},backref=false,colorlinks=false]
 {hyperref}

\makeatletter

\providecommand{\tabularnewline}{\\}

\numberwithin{equation}{section}
\numberwithin{figure}{section}
\theoremstyle{plain}
\newtheorem{conjecture}{\protect\conjecturename}[section]
\theoremstyle{plain}
\newtheorem{thm}{\protect\theoremname}[section]
\theoremstyle{remark}
\newtheorem*{rem*}{\protect\remarkname}
\theoremstyle{plain}
\newtheorem{lem}{\protect\lemmaname}[section]
\theoremstyle{plain}
\newtheorem{prop}{\protect\propositionname}[section]

\usepackage{babel}
\usepackage{amssymb}

\makeatother

\providecommand{\conjecturename}{Conjecture}
\providecommand{\lemmaname}{Lemma}
\providecommand{\propositionname}{Proposition}
\providecommand{\remarkname}{Remark}
\providecommand{\theoremname}{Theorem}

\begin{document}
\title[Conjectures of Schlosser and Zhou]{Some conjectures of Schlosser and Zhou on sign patterns of the coefficients
of infinite products}
\author{Bing He}
\address{School of Mathematics and Statistics, HNP-LAMA, Central South University
\\
 Changsha 410083, Hunan, People's Republic of China}
\email{yuhelingyun@foxmail.com; yuhe123456@foxmail.com}
\author{Linpei Li}
\address{School of Mathematics and Statistics, HNP-LAMA, Central South University\\
 Changsha 410083, Hunan, People's Republic of China}
\email{232111035@csu.edu.cn}
\keywords{sign pattern; infinite product; circle method; partition; vanishing
of coefficients}
\subjclass[2000]{05A16; 05A17; 11P55; 11P82; 11P83}
\begin{abstract}
Recently, Schlosser and Zhou proposed many conjectures on sign patterns
of the coefficients appearing in the $q$-series expansions of the
infinite Borwein product and other infinite products raised to a real
power. In this paper, we will study several of these conjectures.
Let 
\[
G(q):=\prod_{i=1}^{I}\left(\prod_{k=0}^{\infty}(1-q^{m_{i}+kn_{i}})(1-q^{-m_{i}+(k+1)n_{i}})\right)^{u_{i}}
\]
where $I$ is a positive integer, $1\leq m_{i}<n_{i}$ and $u_{i}\neq0$
for $1\leq i\leq I$ and $|q|<1.$ We will establish an asymptotic
formula for the coefficients of $G(q)^{\delta}$ with $\delta$ being
a positive real number by using the Hardy--Ramanujan--Rademacher
circle method. As applications, we apply the asymptotic formula to
confirm some of the conjectures of Schlosser and Zhou.
\end{abstract}

\maketitle

\section{Introduction}

\subsection{Background and motivations}

Let $\mathcal{H}$ denote the upper half-plane $\{z\in\mathbb{C}:\Im(z)>0\}$
and let $q=e^{2\pi i\tau}$ with $\tau\in\mathcal{H}.$ The Dedekind
eta function $\eta(\tau)$ is defined by 
\[
\eta(\tau):=e^{\frac{\pi i\tau}{12}}\prod_{n=1}^{\infty}\left(1-e^{2\pi i\tau}\right)=q^{1/24}(q;q)_{\infty},
\]
 where 
\[
(a;q)_{\infty}:=\prod_{i=0}^{\infty}(1-aq^{i}).
\]
Let $p(n)$ denote the number of unrestricted partitions. We known
that $p(n)$ has an interesting generating function: 
\[
\sum_{n=0}^{\infty}p(n)q^{n}=\frac{1}{(q;q)_{\infty}}.
\]

Applying the modularity of the Dedekind eta function $\eta(\tau),$
Hardy and Ramanujan \cite{HR} and Rademacher \cite{R} proved the
famous asymptotic formula: 
\[
p(n)=\frac{1}{2\sqrt{2}\pi}\sum_{k=1}^{\infty}A_{k}(n)\sqrt{k}\frac{\mathrm{d}}{\mathrm{d}n}\left(\frac{2}{\sqrt{n-\frac{1}{24}}}\sinh\left(\frac{\pi}{k}\sqrt{\frac{2}{3}\left(n-\frac{1}{24}\right)}\right)\right),
\]
where 
\[
A_{k}(n):=\sum_{\substack{0\leq h<k\\
\gcd(h,k)=1
}
}e^{\pi i\left(s(h,k)-2nh/k\right)}
\]
with $s(h,k)$ being the Dedekind sum defined as 
\[
s(d,c):=\sum_{n(\mathrm{mod}c)}\left(\left(\frac{dn}{c}\right)\right)\left(\left(\frac{n}{c}\right)\right).
\]
Here 
\[
\left(\left(x\right)\right):=\begin{cases}
x-\left\lfloor x\right\rfloor -1/2 & \text{if}\quad x\notin\mathbb{Z},\\
0 & \text{if}\quad x\in\mathbb{Z},
\end{cases}
\]
with $\left\lfloor x\right\rfloor $ being the greatest integer not
exceeding the real number $x.$

In 1993, P. Borwein investigated modular forms and considered the
Fourier coefficients of the infinite product 
\[
G_{p}(q):=\frac{(q;q)_{\infty}}{(q^{p};q^{p})_{\infty}}=:\sum_{n\geq0}c_{p}(n)q^{n}
\]
with $p$ being a prime, see, for example, \cite{A}. The infinite
product $G_{p}(q)$ was called the infinite Borwein product by Schlosser
and Zhou \cite{SZ}. We can see that $(q^{p};q^{p})_{\infty}/(q;q)_{\infty}$
is the generating function for the number of partitions into parts
that are not a multiple of $p.$ In \cite[Theorem 2.1]{A}, Andrews
proved that $c_{p}(n)$ and $c_{p}(n+p)$ have the same sign for $n\geq0$
and for all primes $p.$

In 2019, with the help of computer algebra, Schlosser proposed the
following conjecture \cite[ Conjecture 1]{S} on the Fourier coefficients
of the infinite Borwein product raised to a real power.
\begin{conjecture}
\label{conj:S} Let $\delta$ be a real number satisfying 
\[
0.227998127341\cdots\approx\frac{9-\sqrt{73}}{2}\leq\delta\leq1\quad\text{or}\quad2\leq\delta\leq3.
\]
Then the series $A^{\delta}(q),B^{\delta}(q),C^{\delta}(q)$ appearing
in the dissection 
\[
G_{3}(q)^{\delta}=A^{\delta}(q^{3})-qB^{\delta}(q^{3})-q^{2}C^{\delta}(q^{3})
\]
are power series in $q$ with non-negative real coefficients.
\end{conjecture}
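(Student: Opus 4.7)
The plan is to apply the paper's general asymptotic formula for the coefficients of $G(q)^\delta$ to the specific product $G_3(q)^\delta$ and read off the sign pattern of its coefficients from the dominant Farey term. First I would observe that
\[
G_3(q)=\frac{(q;q)_\infty}{(q^3;q^3)_\infty}=\prod_{k=0}^\infty(1-q^{3k+1})(1-q^{3k+2})
\]
is exactly the product $G(q)$ of the excerpt in the case $I=1$, $m_1=1$, $n_1=3$, $u_1=1$, so the main asymptotic gives a Hardy--Ramanujan--Rademacher-type expansion for $a_n(\delta):=[q^n]G_3(q)^\delta$. Matching coefficients in $G_3(q)^\delta=A^\delta(q^3)-qB^\delta(q^3)-q^2C^\delta(q^3)$ then recasts the conjecture as the sign statement
\[
a_n(\delta)\ge 0\ \text{when}\ n\equiv 0\pmod 3,\qquad a_n(\delta)\le 0\ \text{when}\ n\equiv 1,2\pmod 3.
\]

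Second, I would identify the dominant Farey term and track its $n\bmod 3$ dependence. Each denominator $k$ in the Rademacher expansion contributes a sum $\sum_h e^{-2\pi inh/k}(\cdots)$ over reduced residues $h$ together with a hyperbolic factor whose argument depends on the behaviour of $G_3^\delta$ at the cusp $h/k$. Because $G_3=q^{1/12}\eta(\tau)/\eta(3\tau)$ is an eta quotient for $\Gamma_0(3)$, the growth rate at the cusp $h/k$ splits sharply according to whether $3\mid k$; I expect the $k=3$ term to be asymptotically dominant and to produce an oscillation of the form $\cos(2\pi n/3+\phi(\delta))$. The required pattern $+,-,-$ forces $\phi(\delta)$ into the open interval $(-\pi/3,\pi/3)$, and I would compute $\phi(\delta)$ explicitly from the modular transformation of $\eta$ and verify that this inclusion holds throughout $[(9-\sqrt{73})/2,1]\cup[2,3]$.

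Third, the remainder of the proof is quantitative: produce an explicit bound on the sum of all non-dominant Rademacher terms, including the tail over large $k$, and determine a threshold $N_0(\delta)$ past which the $k=3$ main term controls the sign of $a_n(\delta)$. For $n<N_0(\delta)$ one checks the sign pattern by direct evaluation, possibly via computer algebra. The algebraic boundary $(9-\sqrt{73})/2$ strongly suggests a quadratic-in-$\delta$ comparison inequality between two Rademacher contributions whose discriminant reproduces the stated range; running this comparison in both directions would also confirm its sharpness. The principal obstacle is \emph{effectivity} --- keeping all implicit constants in the circle-method estimate tight enough that $N_0(\delta)$ is within computational reach --- together with uniformity of the phase $\phi(\delta)$ across the two closed intervals of allowed $\delta$ (and verifying that $\phi(\delta)$ exits $(-\pi/3,\pi/3)$ just beyond the endpoints, to match the conjectured range). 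A useful sanity check is the classical case $\delta=1$, where $G_3(q)$ has well-studied dissection identities and the sign pattern is already known.
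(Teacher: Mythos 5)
First, a point of orientation: the paper does not prove Conjecture \ref{conj:S}. It is quoted only as background, and the authors explicitly defer its resolution to the earlier asymptotic formula of Schlosser and Zhou (\cite[Theorem 3]{SZ}); the results actually proved here are Theorems \ref{thm:6}--\ref{thm:12}, which concern $Q_{5},\dots,Q_{12}$. So there is no in-paper proof to compare against. That said, your outline is precisely the strategy this paper uses for its own theorems (and the one \cite{SZ} used for this very conjecture): write $G_{3}(q)=(q,q^{2};q^{3})_{\infty}$, i.e.\ $I=1$ and $(m_{1},n_{1},u_{1})=(1,3,1)$ in Theorem \ref{thm:1}; note that $\mathcal{L}_{>0}=\{(1,3),(2,3)\}$, so the cusps with $3\mid k$ dominate and the $k=3$ term oscillates with frequency $2\pi/3$ in $n$; bound the remaining terms explicitly; and verify $n<N_{0}$ by computer, treating each coefficient as a polynomial in $\delta$ and bounding it over the whole $\delta$-interval as in the appendix tables. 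Your reduction of the dissection statement to ``$c_{\delta}(n)\geq0$ for $n\equiv0$ and $c_{\delta}(n)\leq0$ for $n\equiv1,2\pmod 3$'' and your identification of the dominant cusp are both correct, and the hypothesis \eqref{eq:thm1} is satisfied here for all $0<\delta\leq12$, so the machinery does apply on the stated range.

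Two substantive corrections, beyond the fact that what you have is a plan rather than a proof. First, the phase window for the pattern $+,-,-$ at frequency $2\pi/3$ is $\phi\in(-\pi/6,\pi/6)$, not $(-\pi/3,\pi/3)$: the three conditions $\cos\phi>0$, $\cos(2\pi/3+\phi)<0$, $\cos(4\pi/3+\phi)<0$ intersect exactly in $(-\pi/6,\pi/6)$. Second, and more importantly, your expectation that the endpoint $\frac{9-\sqrt{73}}{2}$ should arise from a comparison of two Rademacher contributions, and that $\phi(\delta)$ should exit the admissible window just past the endpoints of the $\delta$-range, mislocates where that range comes from. As the paper itself remarks, $\frac{9-\sqrt{73}}{2}$ comes from the coefficient of $q^{3}$: one computes that this coefficient equals $-\frac{1}{6}\delta(\delta^{2}-9\delta+2)$, which is nonnegative for $\delta>0$ precisely when $\delta^{2}-9\delta+2\leq0$, i.e.\ $\delta\geq\frac{9-\sqrt{73}}{2}$. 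The asymptotic main term retains the correct sign pattern on a much larger set of $\delta$; the restriction to $[\frac{9-\sqrt{73}}{2},1]\cup[2,3]$ is forced entirely by finitely many small-$n$ coefficients, so the finite verification step is not a formality but the place where the stated range actually bites, and no asymptotic ``sharpness'' analysis of $\phi(\delta)$ is available or needed. Finally, all of the quantitative content --- the explicit error constants in $E_{\delta,N}(n)$, the $I_{-1}$-sum and monotonicity estimates (Lemmas \ref{lem:z} and \ref{lem:z2}), uniformity of every bound in $\delta$ over a closed interval, and the interval computation for $n<N_{0}$ --- remains to be carried out, and that is where essentially all of the work in such a proof lies.
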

It should be mentioned that the irrational number $\frac{9-\sqrt{73}}{2}$
comes from the coefficient of $q^{3}$ in $G_{3}(q)^{\delta}.$

In \cite{SZ}, Schlosser and Zhou applied the circle method to asymptotically
estimate the coefficients of the infinite Borwein product raised to
a real power. Although it seems that they only partially affirmed
Conjecture \ref{conj:S}, the asymptotic formula in \cite[Theorem 3]{SZ}
can be used to confirm this conjecture and \cite[Conjecture 17]{SZ}
completely and to settle \cite[Conjectures 20 and 23]{SZ} partially.
Some additional results were also derived in that paper. At the end
of their paper, they utilized computer algebra to pose many conjectures
in the appendix. These conjectures predict precise sign patterns of
the coefficients appearing in the the $q$-series expansions of the
infinite Borwein product and other infinite products raised to a real
power. Many of these conjectures concern precise sign patterns of
the coefficients of the infinite product
\[
\frac{(q^{s},q^{k-s};q^{k})_{\infty}^{\delta}}{(q^{t},q^{k-t};q^{k})_{\infty}^{\delta}},
\]
where 
\[
(a_{1},a_{2},\cdots,a_{m};q)_{\infty}:=(a_{1};q)_{\infty}(a_{2};q)_{\infty}\cdots(a_{m};q)_{\infty}.
\]

Numerous studies have concentrated on infinite products of this form.
Richmond and Szekeres \cite{RS} considered the power series expansions
of the infinite products
\begin{align*}
\frac{(q^{3},q^{5};q^{8})_{\infty}}{(q^{1},q^{7};q^{8})_{\infty}} & =:\sum_{n=0}^{\infty}a(n)q^{n},\\
\frac{(q^{1},q^{7};q^{8})_{\infty}}{(q^{3},q^{5};q^{8})_{\infty}} & =:\sum_{n=0}^{\infty}b(n)q^{n}
\end{align*}
and obtained in \cite[Theorem 5.1]{RS} that $a(4m+3)$ and $b(4m+2)$
are always zero for $m\geq0.$ Furthermore, they made conjectures
\cite[p. 367]{RS} on vanishing of the coefficients of the infinite
products
\[
\frac{(q^{5},q^{7};q^{12})_{\infty}}{(q^{1},q^{11};q^{12})_{\infty}}
\]
and
\[
\frac{(q^{1},q^{11};q^{12})_{\infty}}{(q^{5},q^{7};q^{12})_{\infty}}.
\]
The conjectures of Richmond and Szekeres were resolved by Andrew and
Bressound \cite{AB}, who derived a more general result on the vanishing
coefficients in the infinite product
\[
\frac{(q^{r},q^{2k-r};q^{2k})_{\infty}}{(q^{k-r},q^{k+r};q^{2k})_{\infty}}.
\]

 In \cite{RS}, Richmond and Szekeres also derived an asymptotic
formula for the coefficients of the Rogers--Ramanujan continued fraction,
which is defined by 
\[
R(q):=\dfrac{q^{1/5}}{1+\dfrac{q}{1+\dfrac{q^{2}}{1+\dfrac{q^{3}}{1+\cdots}}}}=q^{1/5}\frac{(q,q^{4};q^{5})_{\infty}}{(q^{2},q^{3};q^{5})_{\infty}}.
\]
If 
\[
\frac{(q,q^{4};q^{5})_{\infty}}{(q^{2},q^{3};q^{5})_{\infty}}=:\sum_{n=0}^{\infty}c(n)q^{n},
\]
then Richmond and Szekeres showed that 
\[
c(n)\sim\frac{2^{1/2}}{5^{3/4}}\cos\left(\frac{4\pi}{5}\left(n+\frac{3}{20}\right)\right)n^{-3/4}\exp\left(\frac{4\pi}{5}\sqrt{\frac{n}{5}}\right).
\]
Hence $c(5n),c(5n+2)>0,$ and $c(5n+1),c(5n+3),c(5n+4)<0$ for sufficiently
large $n.$

Motivated by Richmond--Szekeres \cite{RS} and Andrew--Bressound
\cite{AB}, we focus on several conjectures from \cite{SZ}.
\begin{conjecture}
\label{conj:1} Let 
\[
Q_{5}(q):=\frac{(q,q^{4};q^{5})_{\infty}}{(q^{2},q^{3};q^{5})_{\infty}}.
\]
Then the $q$-series coefficients of $Q_{5}(q)^{\delta}$ exhibit
the sign pattern $+-+--$ for 
\[
1\leq\delta\leq\frac{\sqrt{97}-5}{2}\approx2.424428900898\cdots
\]
For 
\[
2.571366313289\cdots\approx\alpha\leq\delta\leq4,
\]
they exhibit the sign pattern $+-+-+.$ Here $\alpha$ is the unique
real root of the polynomial 
\[
x^{7}+35x^{6}+7x^{5}-6055x^{4}-14336x^{3}+104300x^{2}-184752x+282240
\]
that satisfies $2<\alpha<3.$ For $\delta=-1$ $Q_{5}(q)^{\delta}$
exhibit the sign pattern $++---,$ and for $-3\leq\delta\leq-2$ the
sign pattern $+++--.$
\end{conjecture}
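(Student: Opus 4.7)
The plan is to apply the main asymptotic formula of the paper to the infinite product $Q_{5}(q)$, which is an instance of $G(q)$ with $I=2$, $(m_{1},n_{1},u_{1})=(1,5,1)$ and $(m_{2},n_{2},u_{2})=(2,5,-1)$. For the positive values of $\delta$ in the conjecture the formula applies directly. For the negative-exponent cases $\delta=-1$ and $-3\leq\delta\leq-2$, I would first observe that $Q_{5}(q)^{-1}$ is itself of the form $G(q)$ (with the roles of $u_{1}$ and $u_{2}$ reversed), and then apply the asymptotic to $(Q_{5}(q)^{-1})^{|\delta|}$ with positive exponent $|\delta|$.

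Extracting the dominant cusp contribution from the Hardy--Ramanujan--Rademacher expansion should yield an asymptotic of the shape
\[
[q^{n}]Q_{5}(q)^{\delta}\;\sim\;C(\delta)\,n^{-\kappa(\delta)}\cos\!\Bigl(\tfrac{2\pi}{5}\bigl(an+b(\delta)\bigr)\Bigr)\exp\!\bigl(\lambda(\delta)\sqrt{n}\bigr),
\]
generalizing the Richmond--Szekeres formula recalled in the introduction (which is recovered at $\delta=1$). The period in $n$ is $5$ because the dominant cusp $h/k$ has $k=5$; the phase $b(\delta)$ and the growth constant $\lambda(\delta)$ depend linearly on $\delta$. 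Consequently, the sign of $[q^{n}]Q_{5}(q)^{\delta}$ in each residue class $n\equiv 0,1,2,3,4\pmod{5}$ is governed, for large $n$, by the sign of the cosine evaluated at that residue. I would tabulate these five cosines in each of the four $\delta$-ranges of the conjecture and confirm that the resulting quintuple of signs is exactly the claimed pattern, namely $+-+--$, $+-+-+$, $++---$, or $+++--$.

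The two irrational boundary values are precisely the $\delta$ at which one of the five cosine values vanishes, i.e.\ where $\tfrac{2\pi}{5}(an+b(\delta))\equiv\tfrac{\pi}{2}\pmod{\pi}$ for exactly one residue $n$. For the upper boundary $\delta=\tfrac{\sqrt{97}-5}{2}$ of the first range the vanishing should occur at the residue where the pattern changes between $+-+--$ and the transitional window, and similarly $\alpha$ should be characterized as the root of the corresponding transcendental equation in $\delta$; matching this equation to the stated degree-$7$ polynomial (presumably after clearing denominators in $\cos$ and $\sin$ via Chebyshev identities evaluated at the rational angles $2\pi k/5$) identifies $\alpha$ uniquely in $(2,3)$.

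The principal obstacle is the standard one for circle-method-based sign arguments: the asymptotic only forces the claimed sign from some threshold $N_{0}=N_{0}(\delta)$ onward, so I would need an effective error bound in the main theorem, with the implied constants tracked uniformly as $\delta$ ranges over each compact sub-interval of the conjecture. One would then supplement the asymptotic argument by a computer-algebra verification of the coefficients $[q^{n}]Q_{5}(q)^{\delta}$ for $n<N_{0}$. The most delicate step is the neighborhood of the transition points $\tfrac{\sqrt{97}-5}{2}$ and $\alpha$, where the leading cosine degenerates for one residue class and subdominant cusps must be inspected to pin down the sign; controlling this uniformly in $\delta$ near the boundaries is where the main work lies.
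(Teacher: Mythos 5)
Your overall strategy coincides with the paper's: realize $Q_{5}(q)$ as $G(q)$ with $(m_{1},n_{1},u_{1})=(1,5,1)$, $(m_{2},n_{2},u_{2})=(2,5,-1)$, pass to $(Q_{5}(q)^{-1})^{|\delta|}$ for the negative exponents, extract the dominant contribution from the cusps with $k\equiv 0\pmod 5$ (here $\mathcal{L}_{>0}=\{(2,5),(3,5)\}$, resp.\ $\{(1,5),(4,5)\}$), obtain a main term proportional to $\cos\bigl(\tfrac{4\pi}{5}(n+\tfrac{3\delta}{20})\bigr)$ (resp.\ $\cos\bigl(\tfrac{2\pi}{5}(n-\tfrac{2\delta'}{5})\bigr)$) times a Bessel factor, read off the five signs per residue class, and combine an effective, $\delta$-uniform error bound (the paper gets the threshold $n_{0}=176$, resp.\ $143$) with a computer verification of the finitely many remaining coefficients. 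That part of your plan is exactly what is carried out.

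However, your account of the boundary constants $\tfrac{\sqrt{97}-5}{2}$ and $\alpha$ is wrong, and the step you single out as ``where the main work lies'' does not exist. The phase of the dominant cosine depends on $\delta$ only through the linear shift $\tfrac{3\delta}{20}$, so its zeros, viewed as conditions on $\delta$ for a fixed residue class, form arithmetic progressions of \emph{rational} numbers; no Chebyshev manipulation can make them produce the quadratic irrational $\tfrac{\sqrt{97}-5}{2}$ or the root of a degree-$7$ polynomial. As the paper notes immediately after the conjecture, these constants come from the low-order coefficients themselves: each $c_{\delta}^{(5)}(n)$ is a polynomial in $\delta$, and $\tfrac{\sqrt{97}-5}{2}$ is the relevant root of the coefficient of $q^{4}$, while $\alpha$ is the root of the coefficient of $q^{9}$. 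Consequently there is no degeneration of the leading cosine at the endpoints and no need to inspect subdominant cusps there: the paper verifies that the error-to-main-term ratio stays below $\min_{n\bmod 5}\bigl|\cos\bigl(\tfrac{4\pi}{5}(n+\tfrac{3\delta}{20})\bigr)\bigr|$ uniformly over the whole closed interval $\bigl[1,\tfrac{\sqrt{97}-5}{2}\bigr]\cup[\alpha,4]$, and the entire sensitivity to the endpoint values is absorbed into the finite range $n<n_{0}$, which is handled by bounding each polynomial coefficient over the $\delta$-interval (Tables 1--3). If you pursued your plan of locating the transition inside the asymptotic main term, you would find no transition at all and would be unable to recover the stated algebraic boundary values.
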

It should be mentioned that the irrational number $\frac{\sqrt{97}-5}{2}$
arises from the coefficient of $q^{4}$ in $Q_{5}(q)^{\delta}$ and
the constant $\alpha$ originates from the coefficient of $q^{9}$
in $Q_{5}(q)^{\delta}.$
\begin{conjecture}
\label{conj:2} Let 
\[
Q_{6}(q):=(q,q^{5};q^{6})_{\infty}.
\]
Then the $q$-series coefficients of $Q_{6}(q)^{\delta}$ exhibit
the sign pattern $(+-)^{3}$ for all $\delta\geq3.$
\end{conjecture}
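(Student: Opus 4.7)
The plan is to derive Conjecture \ref{conj:2} from the main circle-method asymptotic of the paper, specialised to $G(q)=Q_6(q)=(q,q^5;q^6)_\infty$, i.e.\ the instance $I=1$, $m_1=1$, $n_1=6$, $u_1=1$. Writing $Q_6(q)=q^{-1/12}\eta(\tau)\eta(6\tau)/\{\eta(2\tau)\eta(3\tau)\}$ exhibits it as an eta quotient on $\Gamma_0(6)$ whose relevant cusps have denominators $k\in\{1,2,3,6\}$; the Rademacher-type expansion produced by the main theorem then decomposes $[q^n]Q_6(q)^\delta$ as a sum, indexed by these denominators, of terms of the shape $A_k(\delta)\,n^{-\rho_k(\delta)}\cos\bigl(2\pi n h_k/k+\phi_k(\delta)\bigr)\exp\bigl(\kappa_k(\delta)\sqrt{n}\bigr)$, plus a controlled error.

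First I would evaluate the growth rates $\kappa_k(\delta)$ at each cusp by computing the order of $Q_6^\delta$ there via the standard eta-quotient valuation formula. I expect, and would verify, that for every $\delta\geq 3$ the unique dominant cusp is $\tau=1/2$ (i.e.\ $k=2$), so that the leading behaviour reduces to
\[
[q^n]Q_6(q)^\delta=(-1)^n B(\delta)\,n^{-\rho_2(\delta)}\exp\bigl(\kappa_2(\delta)\sqrt{n}\bigr)\bigl(1+o(1)\bigr),
\]
using the identity $\cos(\pi n+\phi_2(\delta))=(-1)^n\cos\phi_2(\delta)$. The desired sign pattern $(+-)^3$ then amounts to the strict inequality $B(\delta)=A_2(\delta)\cos\phi_2(\delta)>0$ for every $\delta\geq 3$, and this should be verifiable in closed form by evaluating $A_2$ and $\phi_2$ from the transformation law of $\eta$ at $\tau=1/2$, after which $\cos\phi_2(\delta)$ becomes an explicit elementary function of $\delta$ whose positivity for $\delta\geq 3$ is a one-line check.

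Next I would upgrade the asymptotic to an effective statement: for each $\delta\geq 3$, exhibit a concrete threshold $N_0(\delta)$ beyond which $\operatorname{sign}\bigl([q^n]Q_6(q)^\delta\bigr)=(-1)^n$. For $n<N_0(\delta)$ each coefficient is a polynomial $P_n(\delta)$ in $\delta$, obtained by binomial expansion of $\prod_{k=0}^{K}(1-q^{6k+1})^\delta(1-q^{6k+5})^\delta$ with $K$ chosen large enough that no further factors contribute to $[q^n]$; verifying $(-1)^n P_n(\delta)>0$ on $[3,\infty)$ then reduces to Sturm's theorem or interval arithmetic on finitely many explicit polynomials.

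The main obstacle I anticipate is uniformity of the error as $\delta\to\infty$. The Rademacher-type remainder involves Bessel-function ratios whose decay in $n$ deteriorates as $\delta$ grows, so a crude bound produces an $N_0(\delta)$ that blows up, leaving the conjecture unchecked in the tail $\delta\to\infty$. I would circumvent this by splitting the parameter range into a compact piece $[3,\Delta_0]$, on which the circle-method asymptotic is uniformly effective, and a tail $[\Delta_0,\infty)$, on which I would argue directly that for each fixed $n$ the leading monomial of $P_n(\delta)$ in $\delta$ already dictates the correct sign; this reduces the tail to a combinatorial statement about the signs of the multinomial coefficients arising in the expansion of $Q_6^\delta$, which can be read off from the observation that expanding $(1-q^a)^\delta$ contributes $(-1)^j\binom{\delta}{j}q^{aj}$ and only even-weight compositions survive with $+$ sign.
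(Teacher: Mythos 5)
Your plan for a compact range of $\delta$ is essentially the paper's: specialise Theorem \ref{thm:1} to $Q_{6}$, identify the dominant contribution (which indeed comes from $h/k=1/2$, i.e.\ $(\varkappa,\kappa,k)=(3,6,6)$, with main term $\frac{\sqrt{\delta}\pi}{6}(-1)^{n}(n+\delta/12)^{-1/2}I_{-1}(\cdots)$ and a manifestly positive constant), obtain an explicit threshold ($n\geq 57$ in the paper), and check the finitely many remaining coefficients as polynomials in $\delta$. Up to that point your proposal is sound.

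The genuine gap is your treatment of the tail $\delta\in[\Delta_{0},\infty)$. You propose to argue that for each fixed $n$ the leading monomial of $P_{n}(\delta)$ dictates the sign, and to reduce this to a combinatorial statement about the signs of the multinomial coefficients in the expansion of $Q_{6}(-q)^{\delta}=\prod_{k\geq0}(1+q^{6k+1})^{\delta}(1+q^{6k+5})^{\delta}$. This fails for two reasons. First, the coefficients $\binom{\delta}{j}$ of $(1+q^{a})^{\delta}$ are \emph{not} all positive for non-integer $\delta$: they alternate in sign for $j>\delta$, which is precisely why the conjecture is nontrivial; so no sign pattern can be ``read off'' combinatorially. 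Second, even granting that the leading monomial of $P_{n}(\delta)$ has the right sign, leading-term dominance only yields a threshold $\delta_{0}(n)$ depending on $n$, and nothing bounds these thresholds uniformly; so the region $\{\delta\geq\Delta_{0},\ n\ \text{large}\}$ is never covered. The paper closes this gap with a different, multiplicative device: write $\delta=\delta_{1}+\delta_{2}$ with $\delta_{1}\in\mathbb{Z}_{\geq0}$ and $\delta_{2}\in[3,4)$, note that $Q_{6}(-q)^{\delta_{1}}$ has non-negative coefficients because $Q_{6}(-q)$ itself does (it is a product of factors $1+q^{a}$) and non-negativity is preserved under integer powers and products, and thereby reduce the whole conjecture to the compact range $\delta_{2}\in[3,4)$, where the circle-method estimate is uniform. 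Without this (or an equivalent) reduction, your argument does not establish the statement for all $\delta\geq3$.
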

\begin{conjecture}
\label{conj:3} Let 
\[
Q_{8}(q):=\frac{(q,q^{7};q^{8})_{\infty}}{(q^{3},q^{5};q^{8})_{\infty}}.
\]
Then the q-series coefficients of $Q_{8}(q)^{\delta}$ exhibit for
$\delta=2$ the length $16$ sign pattern $+-++-+--+--+-++-.$ For
\[
2.664479110226972\cdots\approx\beta\leq\delta\leq4
\]
they exhibit the sign pattern $+-++-+--.$ Here $\beta$ is the unique
real root of the polynomial 
\[
\ensuremath{\begin{aligned} & x^{12}-90x^{11}+1457x^{10}+30486x^{9}-537081x^{8}\\
 & +1892346x^{7}-3683653x^{6}-837509646x^{5}+774767020x^{4}\\
 & +3333687384x^{3}-40887173664x^{2}+94379731200x+49816166400
\end{aligned}
}
\]
that satisfies $2<\beta<3.$ For 
\[
-1<\delta\leq\frac{7-\sqrt{73}}{2}\approx-0.77200187265877\cdots
\]
they exhibit the sign pattern $+++----+.$ For $\delta=-2$ they exhibit
the length $16$ sign pattern $+++++----+++----.$
\end{conjecture}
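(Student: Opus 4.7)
The plan is to identify $Q_{8}(q)$ as a special case of the product $G(q)$ introduced at the start of the paper, taking $I=2$, $(m_{1},n_{1},u_{1})=(1,8,1)$ and $(m_{2},n_{2},u_{2})=(3,8,-1)$. The main asymptotic theorem of the paper then applies to $[q^{n}]Q_{8}(q)^{\delta}$ for positive $\delta$ and produces a main term of the shape
\[
\sum_{h/k}C_{h,k}(\delta)\,n^{\gamma(\delta)}\exp\!\bigl(\kappa_{h,k}(\delta)\sqrt{n}\bigr)e^{-2\pi inh/k}+(\text{error}),
\]
the sum being over a finite set of dominant Farey fractions that maximize $\kappa_{h,k}(\delta)$. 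Pairing complex conjugate cusps converts the main term into a sum of cosines whose arguments depend only on $n$ modulo $k$, and the sign of this cosine sum in each residue class will furnish the asymptotic sign pattern asserted in the conjecture.

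For the positive-$\delta$ ranges I would first classify the dominant cusp set as a function of $\delta$. I expect that for $\beta\le\delta\le 4$ a single cusp of denominator $8$ dominates, producing the length-$8$ pattern $+-++-+--$, whereas at $\delta=2$ two cusps of equal exponential weight coexist and interact to produce the length-$16$ pattern $+-++-+--+--+-++-$. For the two negative-$\delta$ ranges, I would observe that $Q_{8}(q)^{-1}=(q^{3},q^{5};q^{8})_{\infty}/(q,q^{7};q^{8})_{\infty}$ is itself an instance of $G(q)$, now with $u_{1}=-1,u_{2}=1$; the main theorem therefore applies to $Q_{8}(q)^{|\delta|}$ in this reciprocal guise, and an analogous cusp analysis should yield the patterns $+++----+$ and $+++++----+++----$ respectively.

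The main obstacle is twofold. First, one must rigorously identify which cusps $h/k$ contribute to the main term as $\delta$ varies across each asserted interval and verify that the corresponding cosine sums are nonzero in every residue class with exactly the claimed sign; this is an exercise in bookkeeping with Dedekind eta transformation formulas and modular $S$-factors, but it must be carried out explicitly if the sharp constants $\beta$ and $(7-\sqrt{73})/2$ are to be justified. Second, the transition values appearing in the statement are pinned down by the lowest-order coefficients (the paper notes the same phenomenon for the analogous constants in Conjecture~\ref{conj:1}), so one must render the error term in the circle-method expansion effective enough to produce an explicit threshold $N_{0}(\delta)$ beyond which the asymptotic sign pattern holds, and then verify the sign pattern for $n<N_{0}(\delta)$ by direct computation. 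Achieving $N_{0}(\delta)$ uniformly on each closed subinterval of the stated $\delta$-ranges, so that the verification reduces to finitely many computer-algebra checks, is the principal technical hurdle.
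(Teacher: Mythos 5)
There is a genuine gap: your plan presupposes that the paper's circle-method machinery (Theorem \ref{thm:1}) delivers a dominant-cusp asymptotic for $[q^{n}]Q_{8}(q)^{\delta}$ throughout all four stated $\delta$-ranges, but for $Q_{8}$ the hypothesis \eqref{eq:thm1} fails once $\delta$ exceeds roughly $2$; in particular it fails on the entire interval $\beta\leq\delta\leq4$. That hypothesis is not a technicality: it is exactly what lets Lemma \ref{lem:16} control the difference $\hat{G}(h,k,z)^{\delta}-\Pi_{h,k}$ on the major arcs, so without it the decomposition into a Bessel main term plus a bounded error collapses, and no choice of "dominant cusp of denominator $8$" can be certified. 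This is why the paper proves only the partial result Theorem \ref{thm:8} and states explicitly that the case $\beta\leq\delta\leq4$ of Conjecture \ref{conj:3} remains open. Your proposal, which is otherwise essentially the paper's strategy (identify dominant $(\varkappa,\kappa)$, pair conjugate cusps into cosines, make the error effective, finish with a finite computation), therefore cannot close the conjecture as stated; at best it reproduces the cases $\delta=2$, $\delta=-2$, and part of the negative range.

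Two further points. First, on the negative side the same effectivity issue you flag as "the principal technical hurdle" is in fact fatal near the left endpoint: the threshold $N_{0}(\delta)$ cannot be made uniform as $\delta\to-1^{+}$, which is why the paper only establishes the pattern for $-0.99<\delta\leq(7-\sqrt{73})/2$ rather than for all $-1<\delta$. Second, your description of the $\delta=2$ mechanism is slightly off: the length-$16$ pattern does not come from two equally weighted cusps of denominator $8$ interfering, but from the fact that the leading term $\cos(3\pi n/4)$ vanishes for $n\equiv2,6\pmod 8$, so one must extract the next-order contribution from denominator-$16$ cusps (the term $\tilde{c}_{2}^{(8)}(n)$ in the paper, with argument $\frac{\pi\sqrt{2\delta}}{8}\sqrt{n+\delta/2}$) to resolve the sign in those residue classes; this requires the refined summation and monotonicity lemmas (Lemmas \ref{lem:zplus1} and \ref{lem:zplus2}) that start the Bessel sum at $k'\geq3$, a step absent from your outline.
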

It should be mentioned that the irrational number $\frac{7-\sqrt{73}}{2}$
comes from the coefficient of $q^{4}$ in $Q_{8}(q)^{\delta}$ and
the constant $\beta$ arises from the coefficient of $q^{14}$ in
$Q_{8}(q)^{\delta}.$ 
\begin{conjecture}
\label{conj:4} Let 
\[
Q_{10}(q):=\frac{(q,q^{9};q^{10})_{\infty}}{(q^{3},q^{7};q^{10})_{\infty}}.
\]
Then the $q$-series coefficients of $Q_{10}(q)^{\delta}$ exhibit
for $\delta=1$ the sign pattern $+-++--+--+.$ And for $\delta=-1$
the sign pattern $++++-----+.$
\end{conjecture}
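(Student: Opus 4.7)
The plan is to reduce both assertions to the main asymptotic formula of the paper applied with a positive exponent. For $\delta=1$, we write
$$Q_{10}(q) = \prod_{k=0}^{\infty}(1-q^{1+10k})(1-q^{9+10k}) \cdot \prod_{k=0}^{\infty}\bigl((1-q^{3+10k})(1-q^{7+10k})\bigr)^{-1},$$
which is exactly of the form $G(q)$ with $I=2$, $(m_1,n_1,u_1)=(1,10,1)$ and $(m_2,n_2,u_2)=(3,10,-1)$. For $\delta=-1$, we instead exploit
$$Q_{10}(q)^{-1} = \frac{(q^3,q^7;q^{10})_{\infty}}{(q,q^9;q^{10})_{\infty}},$$
which is again of the form $G(q)$, now with $(m_1,n_1,u_1)=(3,10,1)$ and $(m_2,n_2,u_2)=(1,10,-1)$. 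In both reformulations the overall exponent is $1>0$, so the main asymptotic theorem applies verbatim; one checks trivially that the parameters satisfy $1\le m_i<n_i$ and $u_i\ne 0$.

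Denoting the series in question by $\sum c(n)q^n$, the Hardy--Ramanujan--Rademacher expansion decomposes $c(n)$ as a sum over Farey fractions $h/k$. Because the relevant exponents $m_i/n_i$ all have denominator $10$, only cusps with $k\mid 10$ can contribute to the leading exponential order, the remaining cusps supplying a relative error of size $\exp(-c\sqrt{n})$. A direct evaluation of the associated Kloosterman-type sums and saddle-point contributions at $k=1,2,5,10$ yields, after combination, a main term of the shape
$$c(n) \;=\; n^{-\mu}\exp(B\sqrt{n})\,F(n\bmod 10) \;+\; O\bigl(n^{-\mu'}\exp(B\sqrt{n})\bigr),$$
where $F\colon\mathbb{Z}/10\mathbb{Z}\to\mathbb{R}$ is a finite sum of cosine values with explicit amplitudes and phases read off from the parameters $(m_i,n_i,u_i)$. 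I would then verify by direct numerical evaluation that $F$ takes the signs $+-++--+--+$ on $\mathbb{Z}/10\mathbb{Z}$ in the $\delta=1$ case, and the signs $++++-----+$ in the $\delta=-1$ case.

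The last step is to turn the error estimate effective, producing an explicit threshold $N_0$ beyond which $|F(n\bmod 10)|\cdot n^{-\mu}\exp(B\sqrt{n})$ strictly dominates the error. The claimed sign pattern for $n\ge N_0$ then follows from the asymptotic, and the finitely many cases $n<N_0$ are handled by direct $q$-series expansion. The principal obstacle will be making this bound $N_0$ small enough for the residual finite check to be mechanical; this requires both a sharp quantitative form of the asymptotic and a confirmation that $F$ is bounded away from zero on every residue class modulo $10$, so that the sign of the main term is genuinely transferred to $c(n)$. A secondary but delicate point is the careful bookkeeping of Dedekind sums and roots of unity when combining the cusps $k=1,2,5,10$: a single sign error in a Kloosterman contribution would flip one residue class in the predicted pattern and is where the verification must be the most scrupulous.
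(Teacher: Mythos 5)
Your proposal follows essentially the same route as the paper: rewrite $Q_{10}(q)$ and $Q_{10}(q)^{-1}$ as products of the form $G(q)$ raised to the positive power $1$, apply the Rademacher-type expansion of Theorem \ref{thm:1} to isolate a main term of the form (Bessel factor) times a cosine depending only on $n\bmod 10$, make the error effective to obtain a threshold (the paper gets $n\ge 241$ for $\delta=1$ and $n\ge 211$ for $\delta=-1$), and check the finitely many remaining coefficients by computer. The one correction to your sketch: for this balanced ratio the cusps $k=1,2,5$ satisfy $\Delta(\varkappa,\kappa)\le 0$ and contribute only to the bounded error, so the exponential main term comes solely from $k\equiv 0\pmod{10}$ with $h\equiv 3,7\pmod{10}$, the $k=10$ term dominating.
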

\begin{conjecture}
\label{conj:5} Let 
\[
Q_{12}(q):=\frac{(q,q^{11};q^{12})_{\infty}}{(q^{5},q^{7};q^{12})_{\infty}}.
\]
Then the $q$-series coefficients of $Q_{12}(q)^{\delta}$ exhibit
for $\delta=1$ the sign pattern $+-+\,0-+-+-0+-,$ for $2\leq\delta\leq3$
they exhibit the sign pattern $+-+--+-+-++-.$ For $\delta=-1$ they
exhibit the sign pattern $+++++\,0-----0,$ and for $-1<\delta<0$
the sign pattern $+++++------+.$
\end{conjecture}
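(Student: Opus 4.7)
The plan is to reduce Conjecture \ref{conj:5} to an application of the asymptotic formula for $G(q)^{\delta}$ promised in the abstract. The choice $I=2$, $(m_1,n_1,u_1)=(1,12,1)$, $(m_2,n_2,u_2)=(5,12,-1)$ gives $Q_{12}(q)=G(q)$, so that for every $\delta>0$ the Hardy--Ramanujan--Rademacher expansion provides
\[
[q^n]Q_{12}(q)^{\delta} = M_{\delta}(n) + E_{\delta}(n),
\]
where $M_{\delta}(n)$ is an explicit main term whose $n$-dependence enters through a cosine factor that is periodic in $n\bmod 12$, and $E_{\delta}(n)$ is exponentially smaller than $M_{\delta}(n)$ whenever the cosine is bounded away from zero. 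The same formula applies to $Q_{12}(q)^{-1}$, obtained from the $G$-form by flipping the signs of $u_{1}$ and $u_{2}$, which handles the negative values of $\delta$ after replacing $\delta$ by $|\delta|$.

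For the interval cases $2\leq\delta\leq 3$ and $-1<\delta<0$, I would evaluate $M_{\delta}(r)$ in each of the twelve residue classes $r\bmod 12$ and verify that the resulting cosine factor has the sign predicted by the conjecture throughout the $\delta$-interval. Since this cosine is continuous in $\delta$, it suffices to check that it remains bounded away from zero on the closed subinterval; this simultaneously allows the error $E_{\delta}(n)$ to be discarded for all $n$ beyond an explicit threshold $N_{0}(\delta)$. The sign pattern for $n<N_{0}(\delta)$ is then confirmed by a finite direct computation on the truncated $q$-series, which is tractable since the exponential growth of $M_{\delta}$ keeps $N_{0}$ reasonably small.

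For the exceptional values $\delta=\pm 1$ the conjectured patterns contain zero entries that the asymptotic argument cannot detect. To establish these exact vanishings I would invoke the Andrews--Bressound theorem on
\[
\frac{(q^{r},q^{2k-r};q^{2k})_{\infty}}{(q^{k-r},q^{k+r};q^{2k})_{\infty}},
\]
specialised with $(k,r)=(6,1)$ for $\delta=1$ and with $(k,r)=(6,5)$ for $\delta=-1$. This yields vanishing at $n\equiv 3,9\pmod{12}$ in the first case and at $n\equiv 5,11\pmod{12}$ in the second, matching the zero entries in both patterns. The signs of the remaining nonzero coefficients then follow from the asymptotic formula applied at $\delta=\pm 1$ together with a direct check of small indices.

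I expect the main obstacle to be the \emph{uniform} control of the error $E_{\delta}(n)$ as $\delta$ varies over an interval: the circle-method estimate produces a bound of the form $|E_{\delta}(n)|\le C(\delta)\,n^{-\rho(\delta)}\exp(\lambda'(\delta)\sqrt{n})$ with $\lambda'(\delta)<\lambda(\delta)$, but both $C(\delta)$ and the implicit threshold $N_{0}(\delta)$ depend on $\delta$, and deducing a sign pattern valid for \emph{every} $\delta\in[2,3]$ (or every $\delta\in(-1,0)$) requires a version of the estimate in which these constants are uniform in $\delta$ on compact subintervals. A secondary difficulty is verifying that the cosine factor in $M_{\delta}(r)$ does not itself vanish or change sign on the $\delta$-interval for any nonzero-residue class; this reduces to showing that an explicit trigonometric expression depending on $\delta$ and on auxiliary data built from $(m_{i},n_{i},u_{i})$ stays strictly away from zero throughout.
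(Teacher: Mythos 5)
Your overall strategy (circle method for the generic residues, Andrews--Bressoud with $(k,r)=(6,1)$ and $(6,5)$ for the zero entries at $n\equiv 3,9$ resp.\ $5,11\pmod{12}$, finite check for small $n$) is exactly the paper's strategy for the cases it can handle. But the paper does \emph{not} prove Conjecture \ref{conj:5}; it only proves the weaker Theorem \ref{thm:12}, and your proposal runs into the same two obstructions without addressing them.

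First, the asymptotic formula of Theorem \ref{thm:1} is conditional on the inequality \eqref{eq:thm1}, which for $Q_{12}$ at $(\varkappa,\kappa)=(5,12)$ reads
\[
\frac{24}{\Delta(\varkappa,\kappa)}\min_{1\leq i\leq I}\left(\varUpsilon\left(\lambda_{m_{i},n_{i}}^{*}(h,k)\right)\frac{d_{i}^{2}}{n_{i}}\right)=1\geq\delta .
\]
So the expansion is only available for $0<\delta\leq1$, and the entire case $2\leq\delta\leq3$ is out of reach of this method: the error control of Lemma \ref{lem:16} (which compares $\hat{G}(h,k,z)^{\delta}$ to $\Pi_{h,k}$) genuinely fails, it is not merely a matter of sharpening constants. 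Your proposal assumes the main-term/error decomposition holds for every $\delta>0$, which is false here.

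Second, for $-1<\delta<0$ the cosine factor in the main term is $\cos\bigl(\tfrac{\pi}{6}(n-2\delta^{\prime})\bigr)$ with $\delta^{\prime}=-\delta$, and its sign pattern over $n\bmod 12$ \emph{changes at} $\delta^{\prime}=\tfrac12$: for $\tfrac12<\delta^{\prime}<1$ one gets $+++++------+$ as conjectured, but for $0<\delta^{\prime}<\tfrac12$ one gets $++++------++$, which differs from the conjectured pattern at $n\equiv4$ and $n\equiv10\pmod{12}$. So the step in your plan ``verify that the cosine factor has the sign predicted by the conjecture throughout the $\delta$-interval'' cannot succeed on all of $(-1,0)$; for large $n$ the asymptotics actually contradict the conjectured pattern on $(-\tfrac12,0)$. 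Finally, the uniformity issue you flag is also real at the endpoints: the thresholds $N_{0}(\delta)$ blow up as $\delta\to0^{-}$ and $\delta\to-1^{+}$, which is why the paper only treats $\delta$ in $(-0.999,-0.501)$ and $[-0.499,-0.001)$ with explicit thresholds $1277$ and $1283$, plus the isolated value $\delta=-\tfrac12$ separately.
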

Generally speaking, for infinite products raised to a real power $\delta,$
every coefficient is a polynomial in $\delta$ and the signs of these
coefficients are uncertain. However, the $q$-series coefficients
of $Q_{5}(q)^{\delta},Q_{6}(q)^{\delta},Q_{8}(q)^{\delta},Q_{10}(q)^{\delta}$
and $Q_{12}(q)^{\delta}$ for $\delta$ within the specified ranges
of real numbers exhibit regular sign patterns. But, polynomiality
of the $q$-series coefficients of $Q_{5}(q)^{\delta},Q_{6}(q)^{\delta},Q_{8}(q)^{\delta},Q_{10}(q)^{\delta}$
and $Q_{12}(q)^{\delta}$ makes proofs of Conjectures \ref{conj:1}--\ref{conj:5}
very difficult.

Up to now, as far as we know, no proofs for these conjectures have
been given.

In order to study these conjectures, we investigate the Fourier coefficients
of a real power of a genegal class of infinite products: 
\[
G(q):=\prod_{i=1}^{I}(q^{m_{i}},q^{n_{i}-m_{i}};q^{n_{i}})_{\infty}^{u_{i}},
\]
where $I$ is a positive integer, $\{m_{i}\}_{i=1}^{I}$ and $\{n_{i}\}_{i=1}^{I}$
are two finite sequence of integers with $1\leq m_{i}<n_{i}$ for
$1\leq i\leq I$ and $\{u_{i}\}_{i=1}^{I}$ is a finite sequence of
nonzero integers. Asymptotic behavior of the coefficients of $G(q)$
was studied by Chern \cite{C}. Some uniform asymptotic formulas for
restricted bipartite partitions can be found in \cite{Z}. In this
paper, using the Hardy--Ramanujan--Rademacher circle method, we
will establish an asymptotic formula for $c_{\delta}(n),$ where 
\[
G(q)^{\delta}=:\sum_{n=0}^{\infty}c_{\delta}(n)q^{n},
\]
with $\delta$ being a positive real number.

\subsection{\label{subsec:1-2} Notations and our main result}

In this subsection, we first state some notations. Several of these
notations are borrowed from Chern \cite{C}. Given a real number $x,$
we define 
\[
\varPhi(x)=\begin{cases}
1, & \text{if}\;x=0,\\
x, & \text{otherwise},
\end{cases}
\]
and 
\[
\varUpsilon(x)=\begin{cases}
0, & \text{if}\;x=0,\\
x, & \text{if}\;0<x\leq1/2,\\
1-x, & \text{if}\;1/2<x<1.
\end{cases}
\]

Let $h,k$ be two integers such that $0\leq h<k$ and $\gcd(h,k)=1,$
and let $m,n$ be two positive integers. We define 
\[
\lambda_{m,n}(h,k):=\left\lceil \frac{mh}{\gcd(n,k)}\right\rceil 
\]
and
\[
\lambda_{m,n}^{*}(h,k):=\lambda_{m,n}(h,k)-\frac{mh}{\gcd(n,k)},
\]
where $\left\lceil x\right\rceil $ denotes the smallest integer not
less than the real number $x.$ For $\lambda_{m,n}^{*}(h,k),$ we
define four subsets of $\{1,2,\cdots,I\}:$
\begin{align*}
\mathcal{I}_{0}(h,k) & :=\{1\leq i\leq I\;|\;\lambda_{m_{i},n_{i}}^{*}(h,k)=0\},\\
\mathcal{I}_{1}(h,k) & :=\{1\leq i\leq I\;|\;\lambda_{m_{i},n_{i}}^{*}(h,k)\neq0\},\\
\mathcal{I}_{0}^{+}(h,k) & :=\{1\leq i\leq I\;|\;\lambda_{m_{i},n_{i}}^{*}(h,k)=0,u_{i}>0\},\\
\mathcal{I}_{0}^{-}(h,k) & :=\{1\leq i\leq I\;|\;\lambda_{m_{i},n_{i}}^{*}(h,k)=0,u_{i}<0\}.
\end{align*}

We also define
\[
q_{i}^{(1)}(h,k):=\exp\left(\frac{2\pi\mathrm{i}\left(m_{i}d_{i}+\lambda_{m_{i},n_{i}}(h,k)n_{i}h_{n_{i}}^{\prime}(h,k)d_{i}\right)}{kn_{i}}+\frac{2\pi\left(m_{i}d_{i}h-\lambda_{m_{i},n_{i}}(h,k)d_{i}^{2}\right)}{n_{i}z}\right),
\]
\[
\hat{q}_{i}^{(1)}(h,k):=\exp\left(\frac{2\pi\left(m_{i}d_{i}h-\lambda_{m_{i},n_{i}}(h,k)d_{i}^{2}\right)}{n_{i}}\right),
\]
\[
q_{i}^{(2)}(h,k):=\exp\left(\frac{2\pi\mathrm{i}h_{n_{i}}^{\prime}(h,k)d_{i}}{k}-\frac{2\pi d_{i}^{2}}{n_{i}z}\right),
\]
\[
\hat{q}_{i}^{(2)}(h,k):=\exp\left(-\frac{2\pi d_{i}^{2}}{n_{i}}\right),
\]
and 
\[
\Pi_{h,k}\text{:=\ensuremath{\prod_{i\in\mathcal{I}_{0}(h,k)}\left(1-q_{i}^{(1)}\right)^{u_{i}\delta}},}
\]
where $d_{i}=\gcd(n_{i},k)$ and $h_{n}^{\prime}(h,k)$ will be defined
in Subsection \ref{subsec:2-2}. Sometimes, for brevity, we write
$q_{i}^{(1)}(h,k),\hat{q}_{i}^{(1)}(h,k),q_{i}^{(2)}(h,k)$ and $\hat{q}_{i}^{(2)}(h,k)$
as $q_{i}^{(1)},\hat{q}_{i}^{(1)},q_{i}^{(2)}$ and $\hat{q}_{i}^{(2)}$
respectively.

We use $\mathrm{i}$ to denote $\sqrt{-1},$ and employ the following
notations: 
\[
\Omega:=\sum_{i=1}^{I}u_{i}\left(\frac{12m_{i}^{2}}{n_{i}}-12m_{i}+2n_{i}\right),
\]
\[
\omega_{h.k}:=\prod_{i=1}^{I}\exp\left(-2u_{i}\pi\mathrm{i}s\left(\frac{n_{i}h}{\gcd(n_{i},k)},\frac{k}{\gcd(n_{i},k)}\right)\right),
\]
\[
\Delta(h,k):=\sum_{i=1}^{I}u_{i}\left(\frac{12\gcd(n_{i},k)^{2}}{n_{i}}(\lambda_{m_{i},n_{i}}^{*}(h,k)-\lambda_{m_{i},n_{i}}^{*2}(h,k))-\frac{2\gcd(n_{i},k)^{2}}{n_{i}}\right),
\]
and
\begin{align*}
\varTheta_{h,k} & :=\prod_{i=1}^{I}\exp\left(u_{i}\pi\mathrm{i}\left(\frac{m_{i}h}{k}-\frac{m_{i}\gcd(n_{i},k)}{kn_{i}}+2\frac{\lambda_{m_{i},n_{i}}^{*}(h,k)m_{i}\gcd(n_{i},k)}{kn_{i}}\right.\right.\\
 & \qquad\qquad\qquad\left.\left.+\frac{(\lambda_{m_{i},n_{i}}^{2}(h,k)-\lambda_{m_{i},n_{i}}(h,k))h_{n_{i}}^{\prime}(h,k)\gcd(n_{i},k)}{k}\right)\right).
\end{align*}

Let $L=\text{lcm}(n_{1},\cdots,n_{I}).$ We define two disjoint sets:
\[
\mathcal{L}_{\leq0}:=\{(\varkappa,\kappa)\;|\;0\leq\varkappa<\kappa,1\leq\kappa\leq L,\Delta(\varkappa,\kappa)\leq0\},
\]
\[
\mathcal{L}_{>0}:=\{(\varkappa,\kappa)\;|\;0\leq\varkappa<\kappa,1\leq\kappa\leq L,\Delta(\varkappa,\kappa)>0\}.
\]

We now present our main result.
\begin{thm}
\label{thm:1} Let $N$ be a positive integer and let $\delta$ be
a positive real number. If the inequality 
\begin{equation}
\min_{1\leq i\leq I}\left(\varUpsilon\left(\lambda_{m_{i},n_{i}}^{*}(h,k)\right)\frac{d_{i}^{2}}{n_{i}}\right)\geq\frac{\delta\Delta(\varkappa,\kappa)}{24}\label{eq:thm1}
\end{equation}
holds for all $1\leq\kappa\leq L$ and $0\leq\varkappa<\kappa$ such
that $(\varkappa,\kappa)\in\mathcal{L}_{>0},$ then for $n>-\delta\Omega/24,$
we have
\begin{align*}
c_{\delta}(n) & =\sum_{(\varkappa,\kappa)\in\mathcal{L}_{>0}}\sum_{\substack{1\leq k\leq N\\
k\equiv\kappa(\mathrm{mod}L)
}
}\sum_{\substack{0\leq h\leq k\\
h\equiv\varkappa(\mathrm{mod}\kappa)
}
}\frac{2\delta\pi}{k}e^{-2\pi inh/k}\mathrm{i}^{\delta\sum_{j=1}^{I}u_{j}}(-1)^{\delta\sum_{j=1}^{I}u_{j}\lambda_{m_{j},n_{j}}(h,k)}\\
 & \;\times\omega_{h.k}^{\delta}\varTheta_{h,k}^{\delta}\Pi_{h,k}\left(\frac{\Delta(\varkappa,\kappa)}{24n\delta+\delta^{2}\Omega}\right)^{1/2}I_{-1}\left(\frac{\pi}{6k}\sqrt{\text{\ensuremath{\Delta(\varkappa,\kappa)}}(24n\delta+\delta^{2}\Omega)}\right)+E_{\delta,N}(n),
\end{align*}
the error term $E_{\delta,N}(n)$ satisfies the bound 
\begin{align*}
\left|E_{\delta,N}(n)\right| & \leq\sum_{(\varkappa,\kappa)\in\mathcal{L}_{\leq0}}\sum_{\substack{1\leq k\leq N\\
k\equiv\kappa(\mathrm{mod}L)
}
}\sum_{\substack{0\leq h\leq k\\
h\equiv\varkappa(\mathrm{mod}\kappa)
}
}\frac{2}{k(N+1)}\exp\left(\frac{24\pi n+\delta\pi\Omega}{6N^{2}}\right)\\
 & \;\times\exp\left(\frac{\delta\pi}{12}\Delta(\varkappa,\kappa)\right)\prod_{i\in\mathcal{I}_{0}^{+}(h,k)}2^{u_{i}\delta}\prod_{i\in\mathcal{I}_{0}^{-}(h,k)}\left(1-e^{2\pi\mathrm{i}/n_{i}}\right)^{u_{i}\delta}\\
 & \;\times\prod_{i\in\mathcal{I}_{0}(h,k)}\left(\hat{q}_{i}^{(1)}\hat{q}_{i}^{(2)},\left(\hat{q}_{i}^{(1)}\right)^{-1}\hat{q}_{i}^{(2)};\hat{q}_{i}^{(2)}\right)^{-|u_{i}|\delta}\\
 & \;\times\prod_{i\in\mathcal{I}_{1}(h,k)}\left(\hat{q}_{i}^{(1)},\left(\hat{q}_{i}^{(1)}\right)^{-1}\hat{q}_{i}^{(2)};\hat{q}_{i}^{(2)}\right)^{-|u_{i}|\delta}\\
 & +\sum_{(\varkappa,\kappa)\in\mathcal{L}_{>0}}\sum_{\substack{1\leq k\leq N\\
k\equiv\kappa(\bmod\,L)
}
}\sum_{\substack{0\leq h\leq k\\
h\equiv\varkappa(\bmod\,\kappa)
}
}\exp\left(\frac{24\pi n+\delta\pi\Omega}{6N^{2}}\right)\\
 & \quad\times\prod_{i\in\mathcal{I}_{0}^{+}(h,k)}2^{u_{i}\delta}\prod_{i\in\mathcal{I}_{0}^{-}(h,k)}\left(1-e^{2\pi\mathrm{i}/n_{i}}\right)^{u_{i}\delta}\exp\left(\frac{\delta\pi}{12}\Delta(\varkappa,\kappa)\right)\\
 & \quad\times\left(\frac{2}{k(N+1)}\prod_{i\in\mathcal{I}_{0}(h,k)}\left(\hat{q}_{i}^{(1)}\hat{q}_{i}^{(2)},\left(\hat{q}_{i}^{(1)}\right)^{-1}\hat{q}_{i}^{(2)};\hat{q}_{i}^{(2)}\right)^{-|u_{i}|\delta}\right.\\
 & \qquad\quad\left.\times\prod_{i\in\mathcal{I}_{1}(h,k)}\left(\hat{q}_{i}^{(1)},\left(\hat{q}_{i}^{(1)}\right)^{-1}\hat{q}_{i}^{(2)};\hat{q}_{i}^{(2)}\right)^{-|u_{i}|\delta}-\frac{2}{k(N+1)}+\frac{\pi\sqrt{2}}{kN}\right).
\end{align*}
\end{thm}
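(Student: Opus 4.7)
The plan is to apply the Hardy--Ramanujan--Rademacher circle method to the generating function $G(q)^{\delta} = \sum_{n\geq 0} c_{\delta}(n) q^{n}$, adapting the strategy developed by Chern~\cite{C} and Schlosser--Zhou~\cite{SZ} to the general infinite product $G(q)$. Starting from Cauchy's formula $c_{\delta}(n) = \frac{1}{2\pi\mathrm{i}} \oint_{\mathcal{C}} G(q)^{\delta} q^{-n-1}\,\mathrm{d}q$ on a circle inside the unit disk, I would take $\mathcal{C}$ to be the Rademacher path of order $N$, namely the contour composed of Ford arcs tangent to the unit circle at each Farey fraction $h/k$ with $1\leq k\leq N$, $0\leq h<k$, $\gcd(h,k)=1$. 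Under the substitution $q = \exp(2\pi\mathrm{i} h/k - 2\pi z/k^{2})$ each arc becomes a vertical segment in the complex $z$-plane, and the contour integral splits into a finite sum of integrals indexed by pairs $(h,k)$.

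Next I would establish a modular-transformation identity for $G(q)^{\delta}$ at each rational point $h/k$. Writing each factor $(q^{m_{i}},q^{n_{i}-m_{i}};q^{n_{i}})_{\infty}$ as an eta-quotient on the subgroup governed by $n_{i}$, the classical transformation of $\eta$ under $\mathrm{SL}_{2}(\mathbb{Z})$---with Dedekind sums $s(n_{i}h/\gcd(n_{i},k),k/\gcd(n_{i},k))$ entering the multiplier---recasts $G(e^{2\pi\mathrm{i} h/k - 2\pi z/k^{2}})^{\delta}$ in the form of a leading exponential in $1/z$ times a residual $q$-product in a transformed variable. The residual factorises according to the partition $\{1,\dots,I\} = \mathcal{I}_{0}(h,k) \sqcup \mathcal{I}_{1}(h,k)$ dictated by vanishing of $\lambda_{m_{i},n_{i}}^{*}(h,k)$: the $\mathcal{I}_{0}$-factors produce the genuinely singular contribution, whose leading piece is the finite product $\Pi_{h,k}$, while the $\mathcal{I}_{1}$-factors together with the remaining Pochhammer tails in $\mathcal{I}_{0}$ tend to $1$ on the main sheet and are controlled by explicit Pochhammer bounds in $\hat{q}_{i}^{(1)}$ and $\hat{q}_{i}^{(2)}$. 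Tracking the twenty-fourth roots of unity, the signs $\mathrm{i}^{\delta\sum u_{j}}$ and $(-1)^{\delta\sum u_{j}\lambda_{m_{j},n_{j}}(h,k)}$, and the eta-multiplier contributions $\omega_{h,k}^{\delta}$ and $\varTheta_{h,k}^{\delta}$ furnishes the precise constants stated in the theorem, and identifies the exponent controlling the growth as $\frac{\pi\delta}{12k^{2}}\Delta(\varkappa,\kappa)\cdot\frac{k^{2}}{z}$.

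On each Ford arc I would then separate the transformed integrand into a main part (the leading exponential $\exp(A/z + Bz)z^{s}$ where $A$ is proportional to $\Delta(\varkappa,\kappa)(24n\delta + \delta^{2}\Omega)$) and a remainder obtained by subtracting $1$ from the residual $q$-product. The main integral is a standard Hankel-type integral whose evaluation produces the Bessel function $I_{-1}\!\left(\frac{\pi}{6k}\sqrt{\Delta(\varkappa,\kappa)(24n\delta+\delta^{2}\Omega)}\right)$; summing over $(\varkappa,\kappa)\in\mathcal{L}_{>0}$ and the residue classes $h\equiv\varkappa(\bmod\,\kappa)$, $k\equiv\kappa(\bmod\,L)$ gives the explicit main term. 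Pairs with $(\varkappa,\kappa)\in\mathcal{L}_{\leq 0}$ have non-positive exponent and contribute only to $E_{\delta,N}(n)$; a length-times-maximum estimate on the Ford arc gives the arc-length factors $\frac{2}{k(N+1)}$ and $\frac{\pi\sqrt{2}}{kN}$ in the error, while the uniform factor $\exp((24\pi n + \delta\pi\Omega)/(6N^{2}))$ arises from bounding $q^{-n}$ together with the leading exponential uniformly along the arc. The explicit factors $2^{u_{i}\delta}$ on $\mathcal{I}_{0}^{+}$, $(1-e^{2\pi\mathrm{i}/n_{i}})^{u_{i}\delta}$ on $\mathcal{I}_{0}^{-}$, and the $\hat{q}$-Pochhammer products emerge as the natural envelope for the residual $q$-product at the endpoint $z=k/(k^{2}-\mathrm{i} h k)$ of the arc.

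The main obstacle will be controlling the residual Pochhammer tails uniformly on the Ford arcs. Their size is governed by $|\hat{q}_{i}^{(1)}|,|\hat{q}_{i}^{(2)}|$, whose exponents involve $\lambda_{m_{i},n_{i}}^{*}(h,k)$ through $\varUpsilon(\lambda_{m_{i},n_{i}}^{*}(h,k))d_{i}^{2}/n_{i}$. For a pair $(\varkappa,\kappa)\in\mathcal{L}_{>0}$, the hypothesis~(\ref{eq:thm1}) is exactly the pointwise inequality that makes the exponential decay from these tails dominate the subleading part of the main exponential away from the tip of the Ford arc, so that the main-term Bessel contribution is cleanly isolated from the error and no pair with $\Delta>0$ leaks unboundedly into the tail. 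Once~(\ref{eq:thm1}) is invoked to justify this separation, assembling all the bookkeeping---Dedekind-sum multiplier systems, root-of-unity phases, Hankel-contour evaluation, and uniform tail bounds---yields the asymptotic formula and the explicit error bound exactly in the stated form.
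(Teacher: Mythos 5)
Your proposal follows essentially the same route as the paper: the Rademacher/Ford-arc dissection, the eta- and theta-function modular transformation producing the multiplier system $\omega_{h,k}^{\delta}\varTheta_{h,k}^{\delta}$ and the exponent $\Delta(\varkappa,\kappa)$, the split into $\mathcal{L}_{>0}$ and $\mathcal{L}_{\le 0}$ with the residual product factored over $\mathcal{I}_{0}(h,k)\sqcup\mathcal{I}_{1}(h,k)$, the Hankel-contour evaluation yielding $I_{-1}$, and the use of hypothesis \eqref{eq:thm1} precisely to make the tail decay beat the growth of $\exp\bigl(\tfrac{\delta\pi}{12}\Delta(\varkappa,\kappa)\Re(1/z)\bigr)$ so the remainder is maximized at $\Re(1/z)=1$ (the paper's Lemmas \ref{lem:d} and \ref{lem:16}). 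The plan is sound and matches the paper's proof in structure and in all key estimates.
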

From Theorem \ref{thm:1}, we deduce the following results on Conjectures
\ref{conj:1}--\ref{conj:5}.
\begin{thm}
\label{thm:6} Conjecture \ref{conj:1} is true.
\end{thm}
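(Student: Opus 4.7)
The strategy is to specialize Theorem~\ref{thm:1} to $Q_5(q)$, viewed as $G(q)$ with $I=2$, $(m_1,n_1,u_1)=(1,5,1)$ and $(m_2,n_2,u_2)=(2,5,-1)$, so that $L=\mathrm{lcm}(5,5)=5$. Since Theorem~\ref{thm:1} requires $\delta>0$, for the ranges with $\delta<0$ I rewrite $Q_5(q)^\delta=(Q_5(q)^{-1})^{|\delta|}$, which is $G(q)^{|\delta|}$ with the signs of $u_1,u_2$ swapped; this reduces every case in Conjecture~\ref{conj:1} to a positive-exponent application of the theorem.

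Next I would tabulate all the data entering Theorem~\ref{thm:1}: $\Omega$, $d_i=\gcd(n_i,k)$, $\lambda_{m_i,n_i}(h,k)$, $\lambda_{m_i,n_i}^*(h,k)$, and $\Delta(h,k)$ for every $(h,k)$ with $1\le k\le 5$, $0\le h<k$, and $\gcd(h,k)=1$. The pair $(0,5)$ yields $d_i=5$ with $\lambda_{m_i,5}^*=0$ (so both indices lie in $\mathcal{I}_0$), whereas for $k\in\{1,2,3,4\}$ one has $d_i=1$ with $\lambda_{m_i,5}^*(h,k)$ equal to the fractional part of $m_ih/5$. Comparing the resulting values of $\Delta$ isolates the pair(s) $(\varkappa_0,\kappa_0)\in\mathcal{L}_{>0}$ achieving the maximum, which govern the dominant contribution. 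Verifying hypothesis~\eqref{eq:thm1} on each of the four named $\delta$-ranges then reduces to a finite system of inequalities; the endpoints $(\sqrt{97}-5)/2$ and $\alpha$ are, by design, precisely the values at which the $q^4$- and $q^9$-coefficients of $Q_5(q)^\delta$ vanish.

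With these preparations, the main term $\mathcal{M}_\delta(n)$ of Theorem~\ref{thm:1} restricted to the dominant pair takes the shape
\begin{equation*}
\mathcal{M}_\delta(n)=\sum_{h\equiv\varkappa_0\,(\bmod\,\kappa_0)}\frac{2\delta\pi}{k}\,e^{-2\pi inh/k}\,C(\delta,h,k)\left(\frac{\Delta(\varkappa_0,\kappa_0)}{24n\delta+\delta^2\Omega}\right)^{1/2}\!I_{-1}\!\left(\frac{\pi}{6k}\sqrt{\Delta(\varkappa_0,\kappa_0)(24n\delta+\delta^2\Omega)}\right),
\end{equation*}
where $C(\delta,h,k)$ collects the remaining $h,k$-dependent factors from Theorem~\ref{thm:1}. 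Because $L=5$, the phase $e^{-2\pi inh/k}$ depends only on $n\bmod 5$, so evaluating it in each of the five residue classes produces directly the claimed sign patterns $+-+--$, $+-+-+$, $++---$, and $+++--$.

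The main obstacle is making the error bound effective. I must choose $N$ (presumably $N=5$ suffices for $L=5$) and an $n$-threshold $n_0=n_0(\delta)$, uniform on each closed $\delta$-interval, such that $|E_{\delta,N}(n)|<|\mathcal{M}_\delta(n)|$ for $n\ge n_0$. This requires uniform bounds on the $q$-Pochhammer products
\begin{equation*}
\bigl(\hat{q}_i^{(1)}\hat{q}_i^{(2)},(\hat{q}_i^{(1)})^{-1}\hat{q}_i^{(2)};\hat{q}_i^{(2)}\bigr)^{-|u_i|\delta}
\end{equation*}
appearing in Theorem~\ref{thm:1}, combined with a careful comparison against the exponential growth $\exp\bigl((\pi/6)\sqrt{\Delta^{\max}(24n\delta+\delta^2\Omega)}\bigr)$ of the Bessel main term. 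For the finitely many $n<n_0$ in each residue class, I would conclude by a symbolic verification: every coefficient of $Q_5(q)^\delta$ is a polynomial in $\delta$ of small degree, so checking its sign on a closed $\delta$-interval reduces to a finite real-root-isolation computation, compatible with the critical values $(\sqrt{97}-5)/2$ and $\alpha$ already identified.
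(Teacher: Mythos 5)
Your overall strategy is the same as the paper's: specialize Theorem~\ref{thm:1} to $Q_5$ with ${\bf m}=\{1,2\}$, ${\bf n}=\{5,5\}$, ${\bf u}=\{1,-1\}$ (and pass to $(Q_5^{-1})^{|\delta|}$ for negative $\delta$), identify $\mathcal{L}_{>0}$ and the dominant $(\varkappa,\kappa,k)$, read the sign pattern off the cosine coming from the phases at $h\equiv 2,3\pmod 5$, and finish the finitely many small $n$ by exploiting polynomiality of the coefficients in $\delta$. However, there is one concrete gap: your parenthetical choice ``presumably $N=5$ suffices'' would make the argument collapse. In Theorem~\ref{thm:1} the parameter $N$ is the order of the Farey dissection, not the modulus $L$, and the error bound contains the factor $\exp\left(\frac{24\pi n+\delta\pi\Omega}{6N^{2}}\right)$; with $N$ fixed this grows exponentially in $n$ and can never be dominated by the main term. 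The paper takes $N=\left\lceil\sqrt{4\pi\left(n+\frac{\delta}{5}\right)}\right\rceil$ precisely so that this exponential stays bounded by an absolute constant.

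This forced choice $N\asymp\sqrt{n}$ creates a second task you do not address: the ``main term'' of Theorem~\ref{thm:1} then sums over all $k\leq N$ with $k\equiv 0\pmod 5$, not just $k=5$, so after isolating the $k=5$ contribution $\hat{c}_{\delta}^{(5)}(n)$ you must still bound the tail $\sum_{1<k'\leq N/5}I_{-1}\left(\frac{4\pi\sqrt{\delta}}{5\sqrt{5}k'}\sqrt{n+\delta/5}\right)$ against $I_{-1}\left(\frac{4\pi\sqrt{\delta}}{5\sqrt{5}}\sqrt{n+\delta/5}\right)$. The paper does this with two dedicated lemmas (a bound for $\sum_{k}I_{-1}(2x/k)$ and a monotonicity statement for the resulting ratio), which is what produces the explicit thresholds $n\geq 176$ and $n\geq 143$ and makes the comparison with $\min_{n\bmod 5}\left|\cos\left(\frac{4\pi}{5}\left(n+\frac{3\delta}{20}\right)\right)\right|$ uniform in $\delta$ on the stated intervals. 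Without these two ingredients your error analysis is not effective, so the proposal as written does not yet constitute a proof, although the skeleton is correct.
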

\begin{thm}
\label{thm:7} Conjecture \ref{conj:2} is true.
\end{thm}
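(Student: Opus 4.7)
My plan is to specialise Theorem~\ref{thm:1} to the single-factor case $I=1$, $m_1=1$, $n_1=6$, $u_1=1$, so that $G(q)=Q_6(q)$ and $L=6$. A short computation gives $\Omega = \frac{12}{6}-12+2\cdot 6 = 2$, and I would tabulate $\Delta(\varkappa,\kappa)$ over the finitely many coprime pairs $(\varkappa,\kappa)$ with $1\leq\kappa\leq 6$: the positive values are $\Delta(1,2)=\Delta(1,4)=\Delta(3,4)=\frac{2}{3}$ and $\Delta(1,3)=\Delta(2,3)=1$, while $\Delta\leq 0$ in every other class. The exponential growth of each Bessel main term is controlled by the ratio $\sqrt{\Delta(\varkappa,\kappa)}/\kappa$, which equals $1/\sqrt{6}$ at $(1,2)$ and is strictly smaller at every other element of $\mathcal{L}_{>0}$ (for instance $1/3$ at $(1,3)$ and $(2,3)$), so $(\varkappa,\kappa)=(1,2)$ is the unique dominant pair.

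Since $e^{-2\pi\mathrm{i}n\cdot 1/2}=(-1)^n$, the dominant Bessel contribution at $(h,k)=(1,2)$ already oscillates with the period $2$ required by the pattern $(+-)^3$. I would then evaluate the accompanying phase factors $\mathrm{i}^\delta$, $(-1)^{\delta\lambda_{1,6}(1,2)}$, $\omega_{1,2}^\delta$, $\varTheta_{1,2}^\delta$ and $\Pi_{1,2}$ appearing in Theorem~\ref{thm:1}, using the explicit values $\lambda_{1,6}(1,2)=1$, $\lambda_{1,6}^*(1,2)=\frac{1}{2}$, $d_1=2$ and the Dedekind sum $s(3,1)=0$ (so $\omega_{1,2}=1$), and verify that their product is a strictly positive real number for every admissible $\delta$. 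The leading term is therefore $(-1)^n$ times a positive real quantity, which is exactly the sign pattern asserted by Conjecture~\ref{conj:2}.

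To close the argument, I would bound every sub-dominant summand in Theorem~\ref{thm:1}: each grows like $\exp\!\bigl(\frac{\pi}{6\kappa}\sqrt{\Delta(\varkappa,\kappa)(24n\delta+\delta^2\Omega)}\bigr)$, strictly slower than the $(1,2)$ contribution, and choosing $N$ of order $\sqrt{n}$ makes the explicit error $E_{\delta,N}(n)$ negligible compared with the leading Bessel factor once $n$ exceeds a computable threshold $n_0(\delta)$. The finitely many residual indices $n<n_0(\delta)$ are then verified directly: each coefficient $c_\delta(n)$ is a polynomial of low degree in $\delta$, whose sign on the interval $[3,\infty)$ can be determined by locating its (finitely many) real roots and testing an interior point.

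The main obstacle is the hypothesis~(\ref{eq:thm1}) of Theorem~\ref{thm:1}. A direct check with our parameters shows that the smallest quantity $\varUpsilon(\lambda_{1,6}^*(h,k))d_1^2/n_1$ over $(\varkappa,\kappa)\in\mathcal{L}_{>0}$ is $\frac{1}{3}$ (realised at $(1,2)$, $(1,4)$, $(3,4)$), while the largest $\Delta(\varkappa,\kappa)/24$ is $\frac{1}{24}$ (at $(1,3)$, $(2,3)$), forcing $\delta\leq 12$ for a direct application. To extend to $\delta>12$ I expect to refine the sub-dominant and error estimates by hand, essentially re-running the circle method on the $(1,2)$ Farey arc while bounding the other arcs uniformly in $\delta$; controlling the growth of the $q$-Pochhammer factors in the error bound as $\delta\to\infty$, and ensuring that the $(1,2)$ Bessel term continues to dominate even after the large-$\delta$ correction factors are absorbed, will be the central technical difficulty.
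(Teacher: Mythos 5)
There is a genuine gap: your argument, as you yourself note, only applies Theorem \ref{thm:1} on a bounded range of $\delta$ (roughly $3\leq\delta\leq 12$, where hypothesis \eqref{eq:thm1} holds), while Conjecture \ref{conj:2} asserts the sign pattern for \emph{all} $\delta\geq 3$. The plan you sketch for $\delta>12$ --- ``re-running the circle method on the $(1,2)$ arc while bounding the other arcs uniformly in $\delta$'' --- is not a workable repair: hypothesis \eqref{eq:thm1} fails there for a structural reason (the Pochhammer tail of $\hat{G}(h,k,z)^{\delta}$ is no longer dominated by the main exponential), the error bounds contain factors such as $2^{u_i\delta}$ and $\left(\hat{q}_i^{(1)},\cdot;\hat{q}_i^{(2)}\right)^{-|u_i|\delta}$ that blow up as $\delta\to\infty$, and consequently the threshold $n_0(\delta)$ cannot be made uniform in $\delta$; your concluding finite check of $c_\delta(n)$ for $n<n_0(\delta)$ on the interval $[3,\infty)$ therefore does not close the argument.

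The paper avoids this entirely with one elementary observation that your proposal is missing: the asserted sign pattern $(+-)^3$ is equivalent to the statement that $Q_6(-q)^{\delta}=\left[(-q,-q^{5};q^{6})_{\infty}\right]^{\delta}$ has non-negative coefficients. Writing $\delta=\delta_1+\delta_2$ with $\delta_1\in\mathbb{Z}_{\geq 0}$ and $\delta_2\in[3,4)$, the factor $Q_6(-q)^{\delta_1}$ is a non-negative integer power of a series with non-negative coefficients and hence itself has non-negative coefficients, so the whole problem reduces to the bounded window $\delta_2\in[3,4)$. On that window the analysis you outline is essentially the paper's: Theorem \ref{thm:1} applies (the relevant ratio is $12>\delta_2$), the dominant Bessel term carries the factor $(-1)^n=e^{-2\pi\mathrm{i}n/2}$, this forces the alternating sign for $n\geq 57$, and the finitely many remaining coefficients are checked as polynomials in $\delta_2$ on $[3,4]$. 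Your identification of the dominant class (the $h/k=1/2$ contribution, with $\sqrt{\Delta}/k=1/\sqrt{6}$), of $\Omega=2$, and of the obstruction $\delta\leq 12$ is all correct; what is missing is the reduction step, without which the conjecture is not proved.
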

\begin{rem*}
In \cite{SZ}, Schlosser and Zhou believed that it is possible to
supply a proof of Conjecture \ref{conj:2} without using asymptotic
machinery. Although this conjecture can be confimed by using our method,
a proof without resorting to asymptotic machinery remains open. A
similar situation occurs in \cite[Conjecture 17]{SZ}.
\end{rem*}
\begin{thm}
\label{thm:8} The $q$-series coefficients of $Q_{8}(q)^{\delta}$
exhibit for $\delta=2$ the length $16$ sign pattern $+-++-+--+--+-++-.$
For 
\[
-0.99<\delta\leq\frac{7-\sqrt{73}}{2}\approx-0.77200187265877\cdots
\]
they exhibit the sign pattern $+++----+.$ For $\delta=-2$ they exhibit
the length $16$ sign pattern $+++++----+++----.$
\end{thm}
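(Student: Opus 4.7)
The plan is to apply Theorem~\ref{thm:1} in each of the three regimes. For $\delta=2$ I take $I=2$, $(m_1,n_1,u_1)=(1,8,1)$ and $(m_2,n_2,u_2)=(3,8,-1)$, so that $G(q)=Q_8(q)$. Since Theorem~\ref{thm:1} requires a positive exponent, for $\delta=-2$ and for the boundary range $-0.99<\delta\le(7-\sqrt{73})/2$ I apply it instead to $Q_8(q)^{-1}$ raised to the positive power $|\delta|$, i.e.\ with $(m_1,n_1,u_1)=(3,8,1)$ and $(m_2,n_2,u_2)=(1,8,-1)$. In every case $n_1=n_2=8$, so $L=\mathrm{lcm}(8,8)=8$ and the set of relevant pairs $(\varkappa,\kappa)$ with $0\le\varkappa<\kappa\le 8$ is finite and small.

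The first substantive step is to enumerate these pairs, compute $d_i=\gcd(8,\kappa)$, $\lambda_{m_i,n_i}^*(\varkappa,\kappa)$, and $\Delta(\varkappa,\kappa)$, and thereby partition them into $\mathcal L_{\le 0}$ and $\mathcal L_{>0}$. Within $\mathcal L_{>0}$ I locate the dominant pairs, namely those maximizing $\sqrt{\Delta(\varkappa,\kappa)}/\kappa$, since this ratio controls the exponential growth of the Bessel factor $I_{-1}\!\left(\frac{\pi}{6k}\sqrt{\Delta(\varkappa,\kappa)(24n|\delta|+|\delta|^2\Omega)}\right)$. The reason the predicted pattern has length $16$ for $\delta=\pm 2$, while it has length $8$ in the boundary range, must come from the interplay of the oscillatory factor $e^{-2\pi\mathrm{i}nh/k}$ at $k=8$ with the root-of-unity prefactors $\mathrm{i}^{|\delta|\sum u_j}(-1)^{|\delta|\sum u_j\lambda_{m_j,n_j}(h,k)}\omega_{h,k}^{|\delta|}\varTheta_{h,k}^{|\delta|}$: since $\sum u_j=0$ the initial power of $\mathrm{i}$ is trivial, but at $|\delta|=2$ the combined Dedekind-sum and $\varTheta_{h,k}$ contribution produces a genuine sign of period $2$ in $n$ that extends the $k=8$ period to $16$.

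Once all dominant phases are collected, the main term of $c_\delta(n)$ becomes a closed-form trigonometric polynomial in $n$, whose sign I would verify residue-by-residue (mod $16$ for $\delta=\pm 2$, mod $8$ in the third case) against $+-++-+--+--+-++-$, $+++----+$, and $+++++----+++----$ respectively. Before applying Theorem~\ref{thm:1} the hypothesis $\min_i\varUpsilon(\lambda_{m_i,n_i}^*)d_i^2/n_i\ge |\delta|\Delta(\varkappa,\kappa)/24$ must also be checked on $\mathcal L_{>0}$; this reduces to finitely many explicit numerical inequalities, tightest at $|\delta|=2$ but still compatible with the method.

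The principal obstacle is making the gap between the leading contribution and the error term $E_{|\delta|,N}(n)$ quantitatively uniform in $n$. Because every subdominant pair strictly decreases $\sqrt{\Delta}/\kappa$, its contribution is exponentially smaller than the leading term and the ratio error over main decays as a negative power of $\exp(\sqrt{n})$. I would choose $N$ large enough (depending on $|\delta|$) to extract an explicit threshold $n_0$ beyond which the sign of $c_\delta(n)$ is forced to agree with the leading trigonometric expression; the remaining coefficients $c_\delta(n)$ for $n\le n_0$ are then checked directly. For the fixed values $\delta=\pm 2$ this is a routine numerical computation, whereas for the interval $-0.99<\delta\le(7-\sqrt{73})/2$ the polynomial dependence of $c_\delta(n)$ on $\delta$ demands a finite interval-arithmetic certification. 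The interval closes at $(7-\sqrt{73})/2$ precisely because this value is the relevant root of $c_\delta(4)$, so the conjectured strict sign pattern itself forces that very endpoint.
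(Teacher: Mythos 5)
Your overall framework (apply Theorem \ref{thm:1} to $Q_8(q)$ for $\delta=2$ and to $Q_8(q)^{-1}$ raised to $|\delta|$ for the negative cases, identify $\mathcal L_{>0}=\{(3,8),(5,8)\}$, isolate the dominant $(\varkappa,\kappa,k)$, and split into an asymptotic regime $n\ge n_0$ plus a finite check) matches the paper. But your explanation of where the length-$16$ pattern comes from is wrong, and this is not a cosmetic error: it is the crux of the $\delta=\pm2$ cases. The prefactors $\omega_{h,k}^{\delta}$, $\varTheta_{h,k}^{\delta}$, $(-1)^{\delta\sum u_j\lambda_{m_j,n_j}(h,k)}$ are constants for each fixed $(h,k)$ — they carry no $n$-dependence whatsoever — so they cannot "produce a genuine sign of period $2$ in $n$ that extends the $k=8$ period to $16$." The only $n$-dependence in the main term is through $e^{-2\pi\mathrm{i}nh/k}$ and the Bessel argument. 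What actually happens at $\delta=2$ is that the leading $k=8$ contribution is proportional to $\cos\left(\frac{3\pi}{4}n\right)$, which \emph{vanishes identically} for $n\equiv 2,6\pmod 8$. On those residue classes the sign is decided by the next term in the Rademacher sum, namely $k=16$ with $h\equiv 3,5\pmod 8$, whose phases $e^{-2\pi\mathrm{i}nh/16}$ give $\cos\left(\frac{3\pi}{8}n+\frac{\delta\pi}{2}\right)+\cos\left(\frac{5\pi}{8}n+\frac{\delta\pi}{2}\right)$ and hence period $16$; the analogous degeneration at $n\equiv 0,4\pmod 8$ drives the $\delta=-2$ case. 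Your plan, as written, would compare $c_2^{(8)}(n)$ only against the $k=8$ main term and would be unable to assign a sign on half of the residues mod $8$.

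Recognizing this forces two further ingredients you have not accounted for: the error analysis on the degenerate residues must subtract \emph{both} the $k=8$ and $k=16$ terms and bound a tail starting at $k'=3$ rather than $k'=2$, which is why the paper proves the new Lemmas \ref{lem:zplus1} and \ref{lem:zplus2} (replacing Lemma \ref{lem:z} and Lemma \ref{lem:z2}, and requiring the Simpson--Spector monotonicity result for ratios of modified Bessel functions); and the resulting thresholds are substantially larger ($n_0=565$ and $567$ for $\delta=\pm2$ versus $479$ for the interval case), which matters for the feasibility of the finite verification. The remaining parts of your proposal — the hypothesis check \eqref{eq:thm1} holding exactly up to $|\delta|=2$, the interval-arithmetic certification over $-0.99<\delta\le(7-\sqrt{73})/2$, and the observation that the endpoint is a root of the coefficient of $q^4$ — are consistent with the paper.
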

\begin{rem*}
Applying Theorem \ref{thm:1}, we are unable to confirm Conjecture
\ref{conj:3} for $\beta\leq\delta\leq4.$ However, we can obtain
that, for any $\varepsilon>0,$ when $-1<\delta\leq-1+\varepsilon,$
the sign pattern of the $q$-series coefficients of $Q_{8}(q)^{\delta}$
is $+++----\,+$ for sufficiently large $n.$
\end{rem*}
\begin{thm}
\label{thm:10} Conjecture \ref{conj:4} is true.
\end{thm}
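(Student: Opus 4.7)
The plan is to specialize Theorem~\ref{thm:1} to the product $Q_{10}(q)$. For the exponent $\delta=1$ I apply the theorem directly with $I=2$, $(m_1,n_1,u_1)=(1,10,1)$, $(m_2,n_2,u_2)=(3,10,-1)$, and positive exponent $\delta=1$. For the exponent $\delta=-1$ I first invert to obtain
\[
Q_{10}(q)^{-1}=\frac{(q^{3},q^{7};q^{10})_{\infty}}{(q,q^{9};q^{10})_{\infty}},
\]
and then apply Theorem~\ref{thm:1} with $(m_1,n_1,u_1)=(3,10,1)$, $(m_2,n_2,u_2)=(1,10,-1)$ and positive exponent $\delta=1$. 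In both settings $L=\mathrm{lcm}(n_1,n_2)=10$, so the residue pairs $(\varkappa,\kappa)$ in $\mathcal{L}_{>0}\cup\mathcal{L}_{\leq 0}$ run over the finite set $\{(\varkappa,\kappa):0\leq\varkappa<\kappa\leq 10\}$.

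First, I would tabulate $\Delta(\varkappa,\kappa)$ for all such pairs using its defining formula with $d_{i}=\gcd(10,\kappa)$. This finite calculation (i) exhibits the partition into $\mathcal{L}_{>0}$ and $\mathcal{L}_{\leq 0}$, (ii) verifies the hypothesis (\ref{eq:thm1}) for every pair in $\mathcal{L}_{>0}$, and (iii) identifies the subset $\mathcal{M}$ of pairs attaining $\Delta_{\max}:=\max_{(\varkappa,\kappa)\in\mathcal{L}_{>0}}\Delta(\varkappa,\kappa)$. Because $I_{-1}(z)\sim e^{z}/\sqrt{2\pi z}$ and the argument appearing in Theorem~\ref{thm:1} involves $\sqrt{\Delta(\varkappa,\kappa)(24n+\Omega)}$, only pairs in $\mathcal{M}$ contribute to leading order; every other pair is exponentially smaller in $\sqrt{n}$, and the error bound $E_{\delta,N}(n)$ is of the same smaller exponential order once $N$ is fixed and $n\to\infty$.

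Next, I would extract the leading-order asymptotic in the form
\[
c_{\pm 1}(n)=\mathcal{S}(n)\,\frac{e^{\pi\sqrt{\Delta_{\max}(24n+\Omega)}/(6k_0)}}{n^{3/4}}\bigl(1+o(1)\bigr),
\]
where $\mathcal{S}(n)$ is a finite Fourier sum over $(h,k)\in\mathcal{M}$ of the phase factors $e^{-2\pi i nh/k}\,\omega_{h,k}\,\varTheta_{h,k}\,\Pi_{h,k}\,\mathrm{i}^{\sum u_{j}}(-1)^{\sum u_{j}\lambda_{m_{j},n_{j}}(h,k)}$. The key observation is that $\mathcal{S}(n)$ depends on $n$ only through $n\bmod 10$. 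Evaluating $\mathcal{S}(n)$ on each of the ten residue classes and checking that its real part is strictly positive, respectively strictly negative, on the classes predicted by $+-++--+--+$ (for $\delta=1$) and $++++-----+$ (for $\delta=-1$) yields the claimed signs for all sufficiently large $n$. Finally, I would choose an explicit $N=N_0$ so that the error bound of Theorem~\ref{thm:1} is dominated by, say, one quarter of $|\mathcal{S}(n)|$ times the Bessel main term for $n\geq n_0$; the finitely many coefficients $n<n_0$ in each residue class are then verified by direct symbolic computation of $Q_{10}(q)^{\pm 1}$.

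The hard part will be the phase bookkeeping in $\mathcal{S}(n)$: one must verify that across each residue class $r\in\{0,1,\ldots,9\}$ the phases contributed by the dominant pairs $(h,k)\in\mathcal{M}$ combine to give a real part bounded away from zero with the sign predicted by Conjecture~\ref{conj:4}, with no accidental cancellation. A subsidiary obstacle is that inverting the product to handle $\delta=-1$ swaps $\mathcal{I}_{0}^{+}$ with $\mathcal{I}_{0}^{-}$ and alters every $\Delta$-value, so the tabulation and the hypothesis check must be redone for the inverted product; and one must keep $n_0$ small enough to make the residual numerical verification genuinely tractable.
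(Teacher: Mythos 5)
Your overall strategy---specializing Theorem \ref{thm:1} to $Q_{10}(q)^{\pm 1}$ with positive exponent $\delta=1$ after inverting the product for the negative case, isolating the dominant pairs (for $\delta=1$ these are $(\varkappa,\kappa,k)=(3,10,10)$ and $(7,10,10)$) whose phases combine into a cosine depending only on $n\bmod 10$, and finishing with a finite computer verification---is exactly the route the paper takes. However, there is a concrete flaw in your error control: you propose to keep $N$ \emph{fixed} (``once $N$ is fixed and $n\to\infty$'', ``choose an explicit $N=N_{0}$''). Every summand of the bound on $E_{\delta,N}(n)$ in Theorem \ref{thm:1} carries the factor $\exp\left(\frac{24\pi n+\delta\pi\Omega}{6N^{2}}\right)$, which for fixed $N=N_{0}$ grows like $e^{4\pi n/N_{0}^{2}}$, i.e.\ exponentially in $n$, whereas the main Bessel term grows only like $e^{c\sqrt{n}}$. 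So for any fixed $N_{0}$ the error bound eventually dwarfs the main term, and no $n_{0}$ of the kind you describe exists. The paper instead takes $N=\left\lceil\sqrt{4\pi(n+3\delta/5)}\right\rceil$, which makes that exponential factor bounded by an absolute constant; the price is that the main-term sum then runs over all $k\leq N$ with $k\equiv0\pmod{10}$, and this growing family of subdominant Bessel terms must itself be bounded. The paper does this with the logarithmic estimate of Lemma \ref{lem:z} and the monotonicity statement of Lemma \ref{lem:z2}, neither of which appears in your sketch.

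Once $N$ is coupled to $n$ in this way and the tail of Bessel terms over $10<k\leq N$ is controlled, the rest of your plan (tabulating $\Delta(\varkappa,\kappa)$, checking hypothesis \eqref{eq:thm1}, evaluating the phase sum on the ten residue classes, and verifying the finitely many remaining coefficients by direct expansion) goes through and coincides with the paper's proof, which obtains the cutoffs $n_{0}=241$ for $\delta=1$ and $n_{0}=211$ for $\delta=-1$.
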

\begin{thm}
\label{thm:12} The $q$-series coefficients of $Q_{12}(q)^{\delta}$
exhibit for $\delta=1$ the sign pattern $+-+\,0-+-+-0+-.$ For $\delta=-1$
they exhibit the sign pattern $+++++\,0-----0,$ and for $-0.999<\delta<-0.501,$
when $n\geq1277$ they exhibit the sign pattern $+++++------+,$ for
$-0.499\leq\delta<-0.001,$ when $n\geq1283$ they exhibit they exhibit
the sign pattern $++++------+\,+,$ for $\delta=-\frac{1}{2}$ they
exhibit the length $24$ sign pattern $+++++------+++++------++.$
\end{thm}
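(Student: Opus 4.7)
The plan is to apply Theorem~\ref{thm:1} to $Q_{12}(q)^{\delta}$ when $\delta>0$ and to $(Q_{12}(q)^{-1})^{|\delta|}$ when $\delta<0$; in either case the underlying $G(q)$ has $I=2$, $(m_1,n_1)=(1,12)$, $(m_2,n_2)=(5,12)$ and $(u_1,u_2)=\pm(1,-1)$, so $L=\mathrm{lcm}(12,12)=12$ and the sum in Theorem~\ref{thm:1} decomposes over the twelve residue classes $(\varkappa,\kappa)$ modulo $12$. For the two integer cases $\delta=\pm 1$, the zeros in the patterns $+-+\,0-+-+-0+-$ and $+++++\,0-----0$ come from the Andrews--Bressoud vanishing-coefficient identity for the quotient $(q^r,q^{2k-r};q^{2k})_\infty/(q^{k-r},q^{k+r};q^{2k})_\infty$ specialised to $k=6$, which forces the coefficients in a single residue class modulo~$6$ to vanish. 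The signs of the remaining coefficients are then read off from the leading Bessel-function term in Theorem~\ref{thm:1}, after verifying that inequality~\eqref{eq:thm1} holds at $\delta=\pm 1$.

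For the continuous ranges $-0.999<\delta<-0.501$ and $-0.499\le\delta<-0.001$ I would proceed in three steps. First, verify~\eqref{eq:thm1} on each closed sub-interval of~$\delta$ by checking the (finite) list of pairs $(\varkappa,\kappa)\in\mathcal{L}_{>0}$ for the chosen $G(q)$. Second, for each residue class $n\bmod 12$, locate the pair $(\varkappa,\kappa)$ maximising $\Delta(\varkappa,\kappa)$; the corresponding Bessel-function contribution dominates the main sum, and its sign is determined by the phase factor $\mathrm{i}^{\delta\sum_j u_j}(-1)^{\delta\sum_j u_j\lambda_{m_j,n_j}(h,k)}\omega_{h,k}^{\delta}\varTheta_{h,k}^{\delta}\Pi_{h,k}$, which is continuous in $\delta$ on each open sub-interval and yields the conjectured length-$12$ pattern. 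Third, choose $N$ (of the order of $L$) so that the explicit error bound in Theorem~\ref{thm:1} is dominated by the main term for all $n$ exceeding an effective threshold; the largest such threshold over each sub-interval yields $n\ge 1277$ and $n\ge 1283$ respectively, and the values of $n$ below these thresholds are covered by direct computation of $Q_{12}(q)^{\delta}$ on a sufficiently fine grid of $\delta$ values, invoking the polynomial (hence uniformly continuous) dependence of each coefficient on $\delta$ to interpolate between grid points.

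The case $\delta=-\tfrac12$ is special: here two families of terms, one contributed by pairs with $\kappa=12$ and one by pairs with $\kappa=24$, become of comparable magnitude, so the effective period of the dominant sign oscillation doubles to~$24$, producing the length-$24$ pattern $+++++------+++++------++$. To make this rigorous, I would apply Theorem~\ref{thm:1} with an enlarged value of~$L$, namely $24$, using the equivalent representation of $Q_{12}(q)^{-1/2}$, isolate the dominant two contributions, and verify that their superposition has the indicated sign in each residue class modulo~$24$.

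The main obstacle is the uniformity in $\delta$ near the transition points $\delta=-1,\,-\tfrac12,\,0$: at each of these the dominant phase factor degenerates or two leading contributions interchange rank, so one must argue delicately that the sign pattern is preserved on each open sub-interval while the boundary values are handled separately. Extracting the explicit thresholds $1277$ and $1283$ requires a careful quantitative bookkeeping of all the constants appearing in the error bound of Theorem~\ref{thm:1}, combined with computer-algebra verification for small~$n$.
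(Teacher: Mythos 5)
Your proposal follows essentially the same route as the paper: apply Theorem \ref{thm:1} to $Q_{12}(q)^{\delta}$ for $\delta>0$ and to $(Q_{12}(q)^{-1})^{\delta'}$ for $\delta<0$, read the signs of the nonvanishing coefficients off the dominant Bessel term after checking \eqref{eq:thm1}, obtain the zeros in the $\delta=\pm1$ patterns from the Andrews--Bressoud vanishing theorem with $k=6$, and treat $\delta=-\tfrac12$ by bringing in the next term of the $k$-sum, whose cosine factor has period $24$. One correction to your description of the $\delta=-\tfrac12$ case: since $L=12$ there are no pairs with $\kappa=24$, and the length-$24$ pattern does not come from two contributions of comparable magnitude exchanging rank, but from the fact that at $\delta'=\tfrac12$ the leading cosine $\cos\bigl(\tfrac{\pi}{6}(n-2\delta')\bigr)$ vanishes identically for $n\equiv4,10\pmod{12}$, so that on exactly those residues the (exponentially smaller) $k=24$ term of the same $(\varkappa,\kappa)=(1,12),(11,12)$ classes, with its period-$24$ cosine combination, is the only one left to decide the sign --- which is precisely how the paper proceeds.
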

\begin{rem*}
Employing Theorem \ref{thm:1}, we can not confirm Conjecture \ref{conj:5}
when $2\leq\delta\leq3.$ However, we can establish that for any $\varepsilon>0,$
when $-1<\delta\leq-1+\varepsilon$ and $-0.5-\varepsilon<\delta\leq-0.5,$
the sign pattern of the $q$-series coefficients of $Q_{12}(q)^{\delta}$
is $+++++------\,+$ for sufficiently large $n.$ Similarly, for $-0.5<\delta\leq-0.5+\varepsilon$
and $-\varepsilon<\delta\leq0,$ the $q$-series coefficients of $Q_{12}(q)^{\delta}$
exhibit the sign pattern $++++------+\,+$ for sufficiently large
$n.$
\end{rem*}
In the next section, we first recall transformation formulas for Dedekind's
eta-function and Jacobi's theta-function and then deduce a modular
transformation for $G(q)^{\delta}.$ Section \ref{sec:3} is devoted
to our proof of Theorem \ref{thm:1}. In Section \ref{sec:4}, as
applications of Theorem \ref{thm:1}, we employ Theorem \ref{thm:1}
to show Theorems \ref{thm:6}--\ref{thm:12}. We first apply Theorem
\ref{thm:1} to give estimates for the coefficients of $Q_{5}(q)^{\delta},Q_{6}(q)^{\delta},Q_{8}(q)^{\delta},Q_{10}(q)^{\delta}$
and $Q_{12}(q)^{\delta}$ for $\delta$ within the specified ranges
of real numbers, thus obtaining information about the sign of the
coefficients when $n\geq n_{0}$ with certain $n_{0}\in\mathbb{N}.$
This reduces the last possible counter-examples to $n<n_{0}.$ Since
each coefficient of $Q_{5}(q)^{\delta},Q_{6}(q)^{\delta},Q_{8}(q)^{\delta},Q_{10}(q)^{\delta}$
and $Q_{12}(q)^{\delta}$ is a polynomial in $\delta,$ we find ranges
in which coefficients are located for $\delta$ within the specified
ranges and for $n<n_{0},$ from which we obtain information about
the sign of these coefficients, and thereby prove Theorems \ref{thm:6}--\ref{thm:12}.

\section{Preliminaries}

\subsection{Dedekind's eta-function and Jacobi's theta-function}

For Dedekind's eta-function $\eta(\tau),$ we have the following transformation
formula.
\begin{lem}
Let $\varGamma=SL_{2}(\mathbb{Z})$. Then for $\gamma=\left(\begin{matrix}a & b\\
c & d
\end{matrix}\right)\in\varGamma$, $c>0$, we have 
\[
\eta(\gamma\tau)=e^{-\pi\mathrm{i}s(d,c)}e^{\pi\mathrm{i}(a+d)/(12c)}\sqrt{-\mathrm{i}(c\tau+d)}\eta(\tau),
\]
where the square root is taken on the principal branch with $\sqrt{z}>0$
for $z>0$.
\end{lem}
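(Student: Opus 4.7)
The plan is to reduce to the two generators of $\varGamma = SL_2(\mathbb{Z})$, namely the translation $T$ and the inversion $S$, where $T\tau = \tau+1$ and $S\tau = -1/\tau$. The $T$--transformation $\eta(\tau+1) = e^{\pi\mathrm{i}/12}\eta(\tau)$ is immediate from the defining product, since the $q^{1/24}$ prefactor contributes $e^{\pi\mathrm{i}/12}$ and the infinite product is $1$-periodic in $\tau$. The genuinely deep input is the inversion identity
\[
\eta(-1/\tau) = \sqrt{-\mathrm{i}\tau}\,\eta(\tau).
\]
For this I would follow Siegel's short contour-integration argument: start from $\log\eta(\tau) = \pi\mathrm{i}\tau/12 - \sum_{m,n\geq1}q^{mn}/m$, represent the double sum as a Mellin--Barnes integral of $\pi\cot(\pi s)\,\zeta(s)\zeta(s+1)(-\mathrm{i}\tau)^{-s}$, and shift the contour to exploit the functional equation of $\zeta$. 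Equivalently, one could invoke Jacobi's triple product to rewrite $\eta(\tau)^3$ as the theta series $\sum_{n\in\mathbb{Z}}(-1)^n(2n+1)q^{(2n+1)^2/8}$ and then apply Poisson summation, reading off the inversion up to the square-root branch, which is fixed by the positivity normalization stated in the lemma.

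With the two generator identities in hand, I would handle a general $\gamma = \bigl(\begin{smallmatrix}a & b\\ c & d\end{smallmatrix}\bigr)$ with $c>0$ by writing it, via the simple continued fraction expansion of $d/c$, as a word $\gamma = \pm\, T^{a_0}ST^{a_1}S\cdots ST^{a_r}$ in the generators. Iterating the $T$-- and $S$--rules produces
\[
\eta(\gamma\tau) = \varepsilon(\gamma)\,\sqrt{-\mathrm{i}(c\tau+d)}\,\eta(\tau)
\]
for some $24$th root of unity $\varepsilon(\gamma)$ given as an explicit product over the continued fraction digits $a_0,\dots,a_r$. The cocycle identity for $\sqrt{-\mathrm{i}(c\tau+d)}$ under composition is standard once the branch is fixed by the $c>0$ convention; at each step the only new data is an additive phase of the form $\pi\mathrm{i}/12$ times an integer, so $\varepsilon(\gamma)$ is automatically a $24$th root of unity.

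The main obstacle is identifying this combinatorially defined $\varepsilon(\gamma)$ with the closed form $e^{-\pi\mathrm{i}s(d,c)}e^{\pi\mathrm{i}(a+d)/(12c)}$. This identification is precisely Dedekind's reciprocity law
\[
s(d,c) + s(c,d) = -\tfrac{1}{4} + \tfrac{1}{12}\left(\tfrac{c}{d} + \tfrac{d}{c} + \tfrac{1}{cd}\right),
\]
together with the periodicity $s(d+c,c) = s(d,c)$, which exactly match the recursion that the continued fraction product satisfies under each step of the Euclidean algorithm. A slicker, logically equivalent, route is to observe that the map $\gamma \mapsto \varepsilon(\gamma)$ is forced to be a multiplier system of weight $1/2$ on $\varGamma$, and that any such multiplier is determined by its values on a set of defining relations of $\varGamma$ (for instance $S^2 = -I$ and $(ST)^3 = -I$); verifying the proposed closed form on these finitely many relations reduces the whole argument to a single check of Dedekind reciprocity, after which the lemma follows for every $\gamma\in\varGamma$ with $c>0$.
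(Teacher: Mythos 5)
The paper does not prove this lemma at all: it is stated as a known classical fact (the Dedekind $\eta$ transformation formula, as in Rademacher's \emph{Topics in Analytic Number Theory} or Apostol's treatment), and the authors simply cite it before moving on to the modular transformation of $G(q)^{\delta}$. Your outline is therefore not competing with an argument in the paper but reconstructing the standard classical proof, and as an outline it is essentially correct: the $T$-identity is immediate, the inversion $\eta(-1/\tau)=\sqrt{-\mathrm{i}\tau}\,\eta(\tau)$ does follow from Siegel's contour-shift of the Mellin--Barnes integral of $\pi\cot(\pi s)\zeta(s)\zeta(s+1)(-\mathrm{i}\tau)^{-s}$ (or from Poisson summation applied to the theta series for $\eta^{3}$), and the identification of the resulting $24$th root of unity with $e^{-\pi\mathrm{i}s(d,c)}e^{\pi\mathrm{i}(a+d)/(12c)}$ is exactly the content of Dedekind reciprocity combined with the periodicity $s(d+c,c)=s(d,c)$. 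Two points deserve more care if you were to write this out in full. First, the word decomposition $\gamma=\pm T^{a_{0}}ST^{a_{1}}\cdots ST^{a_{r}}$ carries a sign ambiguity, and in weight $1/2$ the element $-I$ does not act with trivial multiplier, so you must track the branch of $\sqrt{-\mathrm{i}(c\tau+d)}$ through each composition (the cocycle picks up sign-dependent roots of unity precisely when the lower-left entries change sign); the hypothesis $c>0$ is what pins this down, and the bookkeeping is the most error-prone part of the classical argument. Second, your ``slicker route'' via defining relations requires you to verify not just that the closed form agrees on $S$ and $T$ but that it is itself consistent with the relations $S^{2}=-I$ and $(ST)^{3}=-I$, which again reduces to Dedekind reciprocity; this is logically fine but is not a shortcut around the reciprocity law, only a repackaging of it.
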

Setting $q=e^{2\pi\mathrm{i}\tau}$ with $\mathrm{Im}(\tau)>0$, we
know that the generating function for integer partitions can be written
as 
\[
\sum_{n=0}^{\infty}p(n)q^{n}=\frac{e^{\pi\mathrm{i}\tau/12}}{\eta(\tau)}.
\]

With $q=e^{2\pi\mathrm{i}\tau},$ we let  
\[
\vartheta(\varsigma,\tau):=\sum_{n=-\infty}^{\infty}(-1)^{n-\frac{1}{2}}q^{(n+\frac{1}{2})^{2}}e^{(2n+1)\pi i\varsigma}.
\]
If $\zeta=e^{2\pi\mathrm{i}\varsigma}$, then, by Jacobi's triple
product identity, we have
\[
\vartheta(\varsigma,\tau)=-\mathrm{i}q^{\frac{1}{8}}\zeta^{-\frac{1}{2}}(\zeta,\zeta^{-1}q,q;q)_{\infty}.
\]
It is easy to know that 
\begin{equation}
\vartheta(\varsigma+\alpha\tau+\beta,\tau)=(-1)^{\alpha+\beta}e^{-\pi\mathrm{i}\alpha^{2}\tau}e^{-2\pi\mathrm{i}\alpha\varsigma}\vartheta(\varsigma,\tau).\label{eq:theta2}
\end{equation}
Let $j(\gamma(\tau))=\frac{1}{c\tau+d}.$ Then

\begin{equation}
\eta(\gamma(\tau))=e^{-\pi\mathrm{i}s(d,c)}e^{\pi\mathrm{i}(a+d)/(12c)}e^{-\pi\mathrm{i}/4}j(\gamma(\tau))^{-1/2}\eta(\tau),\label{eq:eta}
\end{equation}
and 
\begin{equation}
\vartheta(\varsigma j(\gamma(\tau)),\gamma(\tau))=e^{-3\pi\mathrm{i}s(d,c)}e^{3\pi\mathrm{i}(a+d)/(12c)}e^{-3\pi\mathrm{i}/4}e^{\pi\mathrm{i}cj(\gamma(\tau))\varsigma^{2}}j(\gamma(\tau))^{-1/2}\vartheta(\varsigma,\tau).\label{eq:theta1}
\end{equation}

\subsection{\label{subsec:2-2} A modular transformation for $G(q)^{\delta}$}

Let $\tau=(h+iz^{\prime})/k$ with $\gcd(h,k)=1.$ We now construct
a matrix in $SL_{2}(\mathbb{Z}).$ Let $d=\gcd(n,k).$ Then $\gcd(n/d,k/d)=1$
and there is an integer $h_{n}^{\prime}(h,k)$ such that $h_{n}^{\prime}(h,k)hn/d\equiv-1\;(\bmod\;k/d).$
Set 
\[
b_{n}(h,k):=\frac{h_{n}^{\prime}(h,k)hn/d+1}{k/d},
\]
and
\[
\gamma_{(n,h,k)}:=\left(\begin{array}{cc}
h_{n}^{\prime}(h,k) & -b_{n}(h,k)\\
\frac{k}{d} & -\frac{nh}{d}
\end{array}\right).
\]
Then we have 
\[
\gamma_{(n,h,k)}(n\tau)=\frac{h_{n}^{\prime}(h,k)(nh+inz^{\prime})/k-b_{n}(h,k)}{\frac{k}{d}(nh+inz^{\prime})/k-\frac{nh}{d}}=\frac{h_{n}^{\prime}(h,k)d}{k}+\mathrm{i}\frac{d^{2}}{nkz^{\prime}}.
\]

Define 
\[
\Lambda(\varsigma,\tau):=(\zeta,\zeta^{-1}q;q)_{\infty}.
\]
Then, by \eqref{eq:eta} and \eqref{eq:theta1},
\[
\Lambda(\varsigma,\tau)=\mathrm{i}q^{-\frac{1}{12}}\zeta^{\frac{1}{2}}\frac{\vartheta(\varsigma,\tau)}{\eta(\tau)},
\]
and 
\begin{equation}
\Lambda(\varsigma,\tau)=e^{-\frac{\pi\mathrm{i}}{6}(\tau-\gamma(\tau))}e^{\pi\mathrm{i}\varsigma(1-j(\gamma(\tau)))}e^{2\pi\mathrm{i}s(d,c)}e^{-2\pi\mathrm{i}(a+d)/(12c)}e^{\pi\mathrm{i}/2}e^{-\pi\mathrm{i}cj(\gamma(\tau))\varsigma^{2}}\Lambda(\varsigma j(\gamma(\tau)),\gamma(\tau)).\label{eq:kappa}
\end{equation}
By \eqref{eq:kappa}, and $-s(d,c)=s(-d,c)$, we have
\begin{align*}
\Lambda(m\tau,n\tau) & =\mathrm{i}\exp\left(-2\pi\mathrm{i}s\left(nh/d,k/d\right)\right)\exp\left(\pi\mathrm{i}\left(\frac{mh}{k}+\frac{md}{kn}-\frac{2m^{2}h}{kn}\right)\right)\\
 & \;\times\exp\left(\frac{\pi}{12k}\left(\left(\frac{12m^{2}}{n}-12m+2n\right)z^{\prime}+\left(\frac{12mhd}{n}-\frac{12m^{2}h^{2}}{n}-\frac{2d^{2}}{n}\right)\frac{1}{z^{\prime}}\right)\right)\\
 & \;\times\Lambda(m\tau j(\gamma_{(n,h,k)}(n\tau)),\gamma_{(n,h,k)}(n\tau)).
\end{align*}

Let $\lambda_{m,n}(h,k)$ and $\lambda_{m,n}^{*}(h,k)$ be defined
as in Subsection \ref{subsec:1-2}. Using \eqref{eq:theta2} we have
\begin{align*}
\Lambda(m\tau,n\tau) & =\mathrm{i}(-1)^{\lambda_{m,n}(h,k)}\exp\left(-2\pi\mathrm{i}s\left(nh/d,k/d\right)\right)\\
 & \;\times\exp\left(\pi\mathrm{i}\left(\frac{mh}{k}-\frac{md}{kn}+2\frac{\lambda_{m,n}^{*}(h,k)md}{kn}+\frac{(\lambda_{m,n}^{2}(h,k)-\lambda_{m,n}(h,k))h_{n}^{\prime}(h,k)d}{k}\right)\right)\\
 & \;\times\exp\left(\frac{\pi}{12k}\left(\frac{12m^{2}}{n}-12m+2n\right)z^{\prime}\right)\\
 & \;\times\exp\left(\frac{\pi}{12k}\left(\frac{12d^{2}}{n}(\lambda_{m,n}^{*}(h,k)-\lambda_{m,n}^{*2}(h,k))-\frac{2d^{2}}{n}\right)\frac{1}{z^{\prime}}\right)\\
 & \;\times\Lambda(m\tau j(\gamma_{(n,h,k)}(n\tau))+\lambda_{m,n}(h,k)\gamma_{(n,h,k)}(n\tau),\gamma_{(n,h,k)}(n\tau)).
\end{align*}
Then 
\begin{align*}
G(e^{2\pi\mathrm{i}\tau}) & =\prod_{i=1}^{I}\Lambda(m_{i}\tau,n_{i}\tau)^{u_{i}}\\
 & =\mathrm{i}^{\sum_{j=1}^{I}u_{j}}(-1)^{\sum_{j=1}^{I}u_{j}\lambda_{m_{j},n_{j}}(h,k)}\omega_{h.k}\varTheta_{h,k}\exp\left(\frac{\pi}{12k}\left(\Omega z^{\prime}+\Delta(h,k)\frac{1}{z^{\prime}}\right)\right)\\
 & \;\times\prod_{i=1}^{I}\Lambda(m_{i}\tau j(\gamma_{(n_{i},h,k)}(n_{i}\tau))+\lambda_{m_{j},n_{j}}(h,k)\gamma_{(n_{i},h,k)}(n_{i}\tau),\gamma_{(n_{i},h,k)}(n_{i}\tau)).
\end{align*}
Let $q_{i}^{(1)}$ and $q_{i}^{(2)}$ be defined as in Subsection
\ref{subsec:1-2}. Then
\[
\begin{aligned}G(e^{\frac{2\pi\mathrm{i}h}{k}-\frac{2\pi z}{k^{2}}}) & =\prod_{i=1}^{I}\Lambda(m_{i}\tau,n_{i}\tau)^{u_{i}}\\
 & =\mathrm{i}^{\sum_{j=1}^{I}u_{j}}(-1)^{\sum_{j=1}^{I}u_{j}\lambda_{m_{j},n_{j}}(h,k)}\omega_{h.k}\varTheta_{h,k}\\
 & \;\times\exp\left(\frac{\pi}{12k}\left(\Omega\frac{z}{k}+\Delta(h,k)\frac{k}{z}\right)\right)\prod_{i=1}^{I}H(q_{i}^{(1)},q_{i}^{(2)})^{-u_{i}},
\end{aligned}
\]
where $H(\zeta,q):=1/(\zeta,\zeta^{-1}q;q).$ From this we deduce
that 
\begin{equation}
\begin{aligned}G(e^{\frac{2\pi\mathrm{i}h}{k}-\frac{2\pi z}{k^{2}}})^{\delta} & =\mathrm{i}^{\delta\sum_{j=1}^{I}u_{j}}(-1)^{\delta\sum_{j=1}^{I}u_{j}\lambda_{m_{j},n_{j}}(h,k)}\omega_{h.k}^{\delta}\varTheta_{h,k}^{\delta}\\
 & \times\exp\left(\frac{\delta\pi}{12k}\left(\Omega\frac{z}{k}+\Delta(h,k)\frac{k}{z}\right)\right)\hat{G}(h,k,z)^{\delta}
\end{aligned}
\label{eq:tran}
\end{equation}
with 
\begin{align*}
\hat{G}(h,k,z) & :=\prod_{i=1}^{I}H(q_{i}^{(1)},q_{i}^{(2)})^{-u_{i}}.
\end{align*}

\section{\label{sec:3} Proof of Theorem \ref{thm:1}}

In this section, we give our proof of Theorem \ref{thm:1}.

\subsection{Rademacher expansion for the coefficients of $G(q)^{\delta}$}

Let $n,N\in\mathbb{N}.$ By Rademacher \cite{R}, we have 
\[
c_{\delta}(n)=\sum_{\substack{0\leq h<k\leq N\\
\gcd(h,k)=1
}
}\frac{\mathrm{i}}{k^{2}}e^{-2\pi\mathrm{i}nh/k}\int_{z_{h,k}^{\prime}}^{z_{h,k}^{\prime\prime}}G(e^{2\pi\mathrm{i}h/k-2\pi z/k^{2}})^{\delta}e^{2\pi nz/k^{2}}dz,
\]
where $z$ runs on the arc of the circle: 
\[
\left|z-\frac{1}{2}\right|=\frac{1}{2}
\]
with the ends $z_{h,k}^{\prime}$ and $z_{h,k}^{\prime\prime}$ being
given by
\[
z_{hk}^{\prime}=\frac{k^{2}}{k^{2}+k_{1}^{2}}+\mathrm{i}\frac{kk_{1}}{k^{2}+k_{1}^{2}},\quad z_{hk}^{\prime\prime}=\frac{k^{2}}{k^{2}+k_{2}^{2}}-\mathrm{i}\frac{k^{2}}{k^{2}+k_{2}^{2}}.
\]
Here $k_{1},k_{2}$ arise from the denominators of adjoint points
of $h/k$ in the Farey sequence of order $N.$

Let $L:=\text{lcm}(n_{1},\cdots n_{I}).$ If $1\leq k\leq N,$ $0\leq h<k$
and $\gcd(h,k)=1,$ then we can find $1\leq\kappa\leq L$ and $0\leq\varkappa<\kappa$
such that $k\equiv\kappa\;(\bmod\;L),$ $h\equiv\varkappa\;(\bmod\;\kappa).$
From this we have, for $1\leq i\leq I,$
\[
\gcd(n_{i},k)=\gcd(n_{i},\kappa),\qquad\lambda_{m_{i},n_{i}}^{*}(h,k)=\lambda_{m_{i},n_{i}}^{*}(\varkappa,\kappa).
\]

Let $\mathcal{L}_{\leq0}$ and $\mathcal{L}_{>0}$ be defined as in
Subsection \ref{subsec:1-2}. Using \eqref{eq:tran} we have 

\begin{align*}
c_{\delta}(n) & =\sum_{1\leq\kappa\leq L}\sum_{0\leq\varkappa<\kappa}\sum_{\substack{1\leq k\leq N\\
k\equiv\kappa\,\text{mod}\,L
}
}\sum_{\substack{0\leq h\leq k\\
h\equiv\varkappa\,\text{mod}\,\kappa
}
}\frac{\mathrm{i}}{k^{2}}e^{-2\pi\mathrm{i}nh/k}\mathrm{i}^{\delta\sum_{j=1}^{I}u_{j}}(-1)^{\delta\sum_{j=1}^{I}u_{j}\lambda_{m_{j},n_{j}}(h,k)}\omega_{h.k}^{\delta}\varTheta_{h,k}^{\delta}\\
 & \;\times\int_{z_{h,k}^{\prime}}^{z_{h,k}^{\prime\prime}}\exp\left(\frac{\delta\pi}{12k}\left(\Omega\frac{z}{k}+\Delta(\varkappa,\kappa)\frac{k}{z}\right)\right)\hat{G}(h,k,z)^{\delta}e^{2\pi nz/k^{2}}dz\\
 & =:\sum_{1\leq\kappa\leq L}\sum_{0\leq\varkappa<\kappa}S_{\varkappa,\kappa}^{\delta}.
\end{align*}
Then
\[
c_{\delta}(n)=\sum_{(\varkappa,\kappa)\in\mathcal{L}_{>0}}S_{\varkappa,\kappa}^{\delta}+\sum_{(\varkappa,\kappa)\in\mathcal{L}_{\leq0}}S_{\varkappa,\kappa}^{\delta}=:I+E.
\]
Let $s_{h,k}$ denote the chord from $z_{h,k}^{\prime}$ to $z_{h,k}^{\prime\prime}.$
Then the path of integration in the inner sum of $E$ can be replaced
by the chord $s_{h,k}.$

We now state some bounds from \cite{R} and \cite{R2}. On the chord
$s_{h,k},$ we have
\begin{equation}
\Re\left(\frac{1}{z}\right)\geq1,\qquad0<\Re\left(z\right)<\frac{2k^{2}}{N^{2}}.\label{eq:Re}
\end{equation}
The endpoints $z_{h,k}^{\prime}$ and $z_{h,k}^{\prime\prime}$ on
the chord $s_{h,k}$ satisfy 
\[
\left|z_{h,k}^{\prime}\right|\leq\frac{\sqrt{2}k}{N},\qquad\left|z_{h,k}^{\prime\prime}\right|\leq\frac{\sqrt{2}k}{N}.
\]
On the circle $\left|z-\frac{1}{2}\right|=\frac{1}{2}$, we have $\Re(\frac{1}{z})=1.$
The length of the chord $s_{h,k}$ is 
\[
\left|s_{h,k}\right|\leq\frac{2k}{N+1}.
\]

\subsection{The minor arcs}

Let $\mathcal{I}_{0}(h,k)$ and $\mathcal{I}_{1}(h,k)$ be defined
as in Subsection \ref{subsec:1-2}. Then $\mathcal{I}_{0}(h,k)$ and
$\mathcal{I}_{1}(h,k)$ are two disjoint subsets of $\{1,2,\cdots,I\}$
and 
\[
\{1,2,\cdots,I\}=\mathcal{I}_{0}(h,k)\cup\mathcal{I}_{1}(h,k).
\]

\begin{prop}
\label{prop:2} Let
\[
\mathcal{T}_{i}^{h,k}(\tau):=m_{i}\tau j(\gamma_{(n_{i},h,k)}(n_{i}\tau))+\lambda_{m_{i},n_{i}}(h,k)\gamma_{(n_{i},h,k)}(n_{i}\tau).
\]
For $h/k\in\mathcal{F}_{N}$ and $i\in\mathcal{I}_{0}(h,k)$, we have
\[
\Im(\mathcal{T}_{i}^{h,k}(\tau))=0,
\]
and $\mathcal{T}_{i}^{h,k}(\tau)$ is not an integer. Futhermore,
we have
\begin{equation}
\left|1-\exp\left(\frac{2\pi\mathrm{i}}{n_{i}}\right)\right|\leq\left|1-\exp(2\pi\mathrm{i}(\mathcal{T}_{i}^{h,k}(\tau)))\right|\leq2.\label{eq:1-e}
\end{equation}
\end{prop}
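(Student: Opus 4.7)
My plan is to prove Proposition \ref{prop:2} by direct computation followed by elementary number theory, relying only on the data already assembled in Subsection \ref{subsec:2-2}. First I would substitute $\tau=(h+\mathrm{i}z')/k$ and expand $\mathcal{T}_i^{h,k}(\tau)$. The bottom row of $\gamma_{(n_i,h,k)}$ is $(k/d_i,\,-n_ih/d_i)$, so the standard formula $j(\gamma(w))=1/(cw+d)$ evaluated at $w=n_i\tau$ yields $j(\gamma_{(n_i,h,k)}(n_i\tau))=-\mathrm{i}d_i/(n_iz')$ where $d_i:=\gcd(n_i,k)$; combined with the explicit identity $\gamma_{(n_i,h,k)}(n_i\tau)=h_{n_i}'(h,k)d_i/k+\mathrm{i}d_i^{2}/(n_ikz')$ already recorded in Subsection \ref{subsec:2-2}, and the trivial rearrangement $-m_ih+\lambda_{m_i,n_i}(h,k)d_i=d_i\lambda_{m_i,n_i}^{*}(h,k)$ coming straight from the definition of $\lambda^{*}$, I obtain
\[
\mathcal{T}_i^{h,k}(\tau) \;=\; \frac{m_id_i}{kn_i}+\frac{\lambda_{m_i,n_i}(h,k)\,h_{n_i}'(h,k)\,d_i}{k}+\frac{\mathrm{i}\,d_i^{2}\,\lambda_{m_i,n_i}^{*}(h,k)}{kn_iz'}.
\]
For $i\in\mathcal{I}_0(h,k)$, the vanishing of $\lambda_{m_i,n_i}^{*}(h,k)$ kills the entire $z'$-dependent term, proving $\Im(\mathcal{T}_i^{h,k}(\tau))=0$.

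Second, the identity $\lambda_{m_i,n_i}^{*}(h,k)=0$ forces $d_i\mid m_ih$; since $d_i\mid k$ and $\gcd(h,k)=1$, this in turn gives $d_i\mid m_i$. Writing $m_i=d_im_i'$ (so that $\lambda_{m_i,n_i}(h,k)=m_i'h$) and invoking the defining relation $b_{n_i}(h,k)\,k=h_{n_i}'(h,k)\,h\,n_i+d_i$, the two real terms collapse to the clean form
\[
\mathcal{T}_i^{h,k}(\tau) \;=\; \frac{m_i\,b_{n_i}(h,k)}{n_i}.
\]
To show this is not an integer, I observe that $b_{n_i}(h,k)\,(k/d_i)\equiv 1\pmod{n_i/d_i}$ (obtained by dividing the defining relation through by $d_i$), together with $\gcd(k/d_i,n_i/d_i)=1$, implies $\gcd(b_{n_i}(h,k),\,n_i/d_i)=1$. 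Hence $n_i\mid m_ib_{n_i}(h,k)=d_im_i'b_{n_i}(h,k)$ would force $n_i/d_i\mid m_i'$, which contradicts $1\le m_i'<n_i/d_i$ (a consequence of $1\le m_i<n_i$).

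Finally, for the bound \eqref{eq:1-e} I invoke the standard identity $|1-e^{2\pi\mathrm{i}x}|=2|\sin(\pi x)|$. Since $\mathcal{T}_i^{h,k}(\tau)$ is real, the upper bound by $2$ is automatic. For the lower bound, setting $r:=m_ib_{n_i}(h,k)\bmod n_i$, the previous paragraph shows $r\in\{1,\dots,n_i-1\}$, so the distance from $\mathcal{T}_i^{h,k}(\tau)$ to the nearest integer equals $\min(r,n_i-r)/n_i\in[1/n_i,1/2]$; monotonicity of $\sin$ on $[0,\pi/2]$ then yields $|\sin(\pi\mathcal{T}_i^{h,k}(\tau))|\ge\sin(\pi/n_i)$, which is exactly \eqref{eq:1-e}. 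The only step that requires any care is the algebraic collapse to the canonical form $m_ib_{n_i}(h,k)/n_i$; once this is in hand, the remaining assertions follow from routine coprimality arguments, and I do not anticipate any serious obstacle.
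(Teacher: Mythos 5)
Your proposal is correct and follows essentially the same route as the paper: expand $\mathcal{T}_i^{h,k}(\tau)$ explicitly, note that $\lambda^*_{m_i,n_i}(h,k)=0$ annihilates the $z'$-dependent term, collapse the real part to $m_i b_{n_i}(h,k)/n_i$, and rule out integrality via $d_i\mid m_i$ together with $\gcd(b_{n_i}(h,k),n_i/d_i)=1$. The only difference is that you spell out the final step (the bound $|1-e^{2\pi\mathrm{i}x}|=2|\sin(\pi x)|$ and the distance-to-nearest-integer estimate), which the paper leaves as "follow readily."
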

\begin{proof}
It follows easily that 
\[
\mathcal{T}_{i}^{h,k}(\tau)=\frac{m_{i}d_{i}+\lambda_{m_{i},n_{i}}(h,k)n_{i}h_{n_{i}}^{\prime}(h,k)d_{i}}{kn_{i}}-\mathrm{i}\frac{m_{i}d_{i}h-\lambda_{m_{i},n_{i}}(h,k)d_{i}^{2}}{n_{i}z}.
\]
Since $i\in\mathcal{I}_{0}(h,k),$ we have
\begin{equation}
m_{i}d_{i}h-\lambda_{m_{i},n_{i}}(h,k)d_{i}^{2}=d_{i}^{2}\left(\frac{m_{i}h}{d_{i}}-\lambda_{m_{i},n_{i}}(h,k)\right)=-\lambda_{m_{i},n_{i}}^{*}(h,k)d_{i}^{2}=0.\label{eq:3-1}
\end{equation}
So $\Im(\mathcal{T}_{i}^{h,k}(\tau))=0$ and
\begin{align*}
\mathcal{T}_{i}^{h,k}(\tau) & =\frac{m_{i}d_{i}+\lambda_{m_{i},n_{i}}(h,k)n_{i}h_{n_{i}}^{\prime}(h,k)d_{i}}{kn_{i}}\\
 & =\frac{m_{i}d_{i}+m_{i}hn_{i}h_{n_{i}}^{\prime}(h,k)}{kn_{i}}\\
 & =\frac{m_{i}b_{n_{i}}(h,k)}{n_{i}}=\frac{(m_{i}/d_{i})b_{n_{i}}(h,k)}{n_{i}/d_{i}}.
\end{align*}
It follows from \eqref{eq:3-1} that $d_{i}\;|\;m_{i}h.$ Since $\gcd(h,k)=1$
and $d_{i}|k,$ we have $\gcd(h,d_{i})=1,$ and so $d_{i}\;|\;m_{i}.$
Since $b_{n_{i}}(h,k)k/d_{i}=hn_{i}h_{n_{i}}^{\prime}(h,k)/d_{i}+1,$
we know that $\gcd(n_{i}/d_{i},b_{n_{i}}(h,k))=1.$ If $\frac{(m_{i}/d_{i})b_{n_{i}}(h,k)}{(n_{i}/d_{i})}$
is an integer, then $(n_{i}/d)\;|\;(m_{i}/d),$ so that $n_{i}\;|\;m_{i}.$
This is a contradiction since $1\leq m_{i}\leq n_{i}-1.$ So $\frac{(m_{i}/d)b_{n_{i}}(h,k)}{(n_{i}/d)}$
is not an integer. The inequalities \eqref{eq:1-e} then follow readily.
\end{proof}
For $(\varkappa,\kappa)\in\mathcal{L}_{\leq0},$ when $24\pi n+\delta\pi\Omega>0,$
we have 
\begin{align*}
\left|S_{\varkappa,\kappa}^{\delta}\right| & \leq\sum_{\substack{1\leq k\leq N\\
k\equiv\kappa(\text{mod}\,L)
}
}\sum_{\substack{0\leq h\leq k\\
h\equiv\varkappa(\text{mod}\,\kappa)
}
}\frac{1}{k^{2}}\int_{s_{h,k}}\exp\left(\frac{24\pi n+\delta\pi\Omega}{12k^{2}}\Re(z)\right)\\
 & \;\times\exp\left(\frac{\delta\pi}{12}\Delta(\varkappa,\kappa)\Re\left(\frac{1}{z}\right)\right)\left|\hat{G}(h,k,z)\right|^{\delta}dz\\
 & \leq\sum_{\substack{1\leq k\leq N\\
k\equiv\kappa(\text{mod}\,L)
}
}\sum_{\substack{0\leq h\leq k\\
h\equiv\varkappa(\text{mod}\,\kappa)
}
}\frac{1}{k^{2}}\int_{s_{h,k}}\exp\left(\frac{24\pi n+\delta\pi\Omega}{6N^{2}}\right)\exp\left(\frac{\delta\pi}{12}\Delta(\varkappa,\kappa)\right)\\
 & \;\times\prod_{i=1}^{I}\left|H(q_{i}^{(1)},q_{i}^{(2)})^{-u_{i}}\right|^{\delta}dz\\
 & \leq\sum_{\substack{1\leq k\leq N\\
k\equiv\kappa(\text{mod}\,L)
}
}\sum_{\substack{0\leq h\leq k\\
h\equiv\varkappa(\text{mod}\,\kappa)
}
}\frac{1}{k^{2}}\int_{s_{h,k}}\exp\left(\frac{24\pi n+\delta\pi\Omega}{6N^{2}}\right)\exp\left(\frac{\delta\pi}{12}\Delta(\varkappa,\kappa)\right)\\
 & \;\times\prod_{i\in\mathcal{I}_{0}(h,k)}\left|H(q_{i}^{(1)},q_{i}^{(2)})^{-u_{i}}\right|^{\delta}\prod_{i\in\mathcal{I}_{1}(h,k)}H(\left|q_{i}^{(1)}\right|,\left|q_{i}^{(2)}\right|)^{|u_{i}|\delta}dz\\
 & =\sum_{\substack{1\leq k\leq N\\
k\equiv\kappa(\text{mod}\,L)
}
}\sum_{\substack{0\leq h\leq k\\
h\equiv\varkappa(\text{mod}\,\kappa)
}
}\frac{1}{k^{2}}\int_{s_{h,k}}\exp\left(\frac{24\pi n+\delta\pi\Omega}{6N^{2}}\right)\exp\left(\frac{\delta\pi}{12}\Delta(\varkappa,\kappa)\right)\\
 & \;\times\prod_{i\in\mathcal{I}_{0}(h,k)}\left(1-q_{i}^{(1)}\right)^{u_{i}\delta}\prod_{i\in\mathcal{I}_{0}(h,k)}\left(\left|q_{i}^{(1)}\right|\left|q_{i}^{(2)}\right|,\left|q_{i}^{(1)}\right|^{-1}\left|q_{i}^{(2)}\right|;\left|q_{i}^{(2)}\right|\right)^{-|u_{i}|\delta}\\
 & \;\times\prod_{i\in\mathcal{I}_{1}(h,k)}\left(\left|q_{i}^{(1)}\right|,\left|q_{i}^{(1)}\right|^{-1}\left|q_{i}^{(2)}\right|;\left|q_{i}^{(2)}\right|\right)^{-|u_{i}|\delta}dz
\end{align*}
It follows readily that 
\[
\left|q_{i}^{(1)}\right|=\exp\left(\frac{2\pi\left(m_{i}d_{i}h-\lambda_{m_{i},n_{i}}(h,k)d_{i}^{2}\right)}{n_{i}}\Re\left(\frac{1}{z}\right)\right)\text{,}
\]
and 
\[
\left|q_{i}^{(2)}\right|=\exp\left(-\frac{2\pi d_{i}^{2}}{n_{i}}\Re\left(\frac{1}{z}\right)\right).
\]
For all $i\in\mathcal{I}_{1}(h,k),$ it is easy to see that  
\[
m_{i}d_{i}h-\lambda_{m_{i},n_{i}}(h,k)d_{i}^{2}<0.
\]
Then, by Proposition \ref{prop:2}, we have 
\[
\begin{aligned}\left|S_{\varkappa,\kappa}^{\delta}\right| & \leq\sum_{\substack{1\leq k\leq N\\
k\equiv\kappa\,\left(\text{mod}\,L\right)
}
}\sum_{\substack{0\leq h\leq k\\
h\equiv\varkappa\,\left(\text{mod}\,\kappa\right)
}
}\frac{1}{k^{2}}\int_{s_{h,k}}\exp\left(\frac{24\pi n+\delta\pi\Omega}{6N^{2}}\right)\exp\left(\frac{\delta\pi}{12}\Delta(\varkappa,\kappa)\right)\\
 & \;\times\prod_{i\in\mathcal{I}_{0}^{+}(h,k)}2^{u_{i}\delta}\prod_{i\in\mathcal{I}_{0}^{-}(h,k)}\left|1-e^{2\pi\mathrm{i}/n_{i}}\right|^{u_{i}\delta}\\
 & \;\times\prod_{i\in\mathcal{I}_{0}(h,k)}\left(\hat{q}_{i}^{(1)}\hat{q}_{i}^{(2)},\left(\hat{q}_{i}^{(1)}\right)^{-1}\hat{q}_{i}^{(2)};\hat{q}_{i}^{(2)}\right)^{-|u_{i}|\delta}\\
 & \;\times\prod_{i\in\mathcal{I}_{1}(h,k)}\left(\hat{q}_{i}^{(1)},\left(\hat{q}_{i}^{(1)}\right)^{-1}\hat{q}_{i}^{(2)};\hat{q}_{i}^{(2)}\right)^{-|u_{i}|\delta}dz
\end{aligned}
\]
\[
\begin{aligned} & \leq\sum_{\substack{1\leq k\leq N\\
k\equiv\kappa\,\left(\text{mod}\,L\right)
}
}\sum_{\substack{0\leq h\leq k\\
h\equiv\varkappa\,\left(\text{mod}\,\kappa\right)
}
}\frac{2}{k(N+1)}\exp\left(\frac{24\pi n+\delta\pi\Omega}{6N^{2}}\right)\exp\left(\frac{\delta\pi}{12}\Delta(\varkappa,\kappa)\right)\\
 & \;\times\prod_{i\in\mathcal{I}_{0}^{+}(h,k)}2^{u_{i}\delta}\prod_{i\in\mathcal{I}_{0}^{-}(h,k)}\left|1-e^{2\pi\mathrm{i}/n_{i}}\right|^{u_{i}\delta}\\
 & \;\times\prod_{i\in\mathcal{I}_{0}(h,k)}\left(\hat{q}_{i}^{(1)}\hat{q}_{i}^{(2)},\left(\hat{q}_{i}^{(1)}\right)^{-1}\hat{q}_{i}^{(2)};\hat{q}_{i}^{(2)}\right)^{-|u_{i}|\delta}\\
 & \;\times\prod_{i\in\mathcal{I}_{1}(h,k)}\left(\hat{q}_{i}^{(1)},\left(\hat{q}_{i}^{(1)}\right)^{-1}\hat{q}_{i}^{(2)};\hat{q}_{i}^{(2)}\right)^{-|u_{i}|\delta}
\end{aligned}
\]
where $\hat{q}_{i}^{(1)}$ and $\hat{q}_{i}^{(2)}$ are as defined
in Subsection \ref{subsec:1-2}. From this we deduce that 
\begin{align*}
|E|\leq\sum_{(\varkappa,\kappa)\in\mathcal{L}_{\leq0}}\left|S_{\varkappa,\kappa}^{\delta}\right| & <\sum_{(\varkappa,\kappa)\in\mathcal{L}_{\leq0}}\sum_{\substack{1\leq k\leq N\\
k\equiv\kappa\,\left(\text{mod}\,L\right)
}
}\sum_{\substack{0\leq h\leq k\\
h\equiv\varkappa\,\left(\text{mod}\,\kappa\right)
}
}\frac{2}{k(N+1)}\exp\left(\frac{24\pi n+\delta\pi\Omega}{6N^{2}}\right)\\
 & \;\times\exp\left(\frac{\delta\pi}{12}\Delta(\varkappa,\kappa)\right)\prod_{i\in\mathcal{I}_{0}^{+}(h,k)}2^{u_{i}\delta}\prod_{i\in\mathcal{I}_{0}^{-}(h,k)}\left|1-e^{2\pi\mathrm{i}/n_{i}}\right|^{u_{i}\delta}\\
 & \;\times\prod_{i\in\mathcal{I}_{0}(h,k)}\left(\hat{q}_{i}^{(1)}\hat{q}_{i}^{(2)},\left(\hat{q}_{i}^{(1)}\right)^{-1}\hat{q}_{i}^{(2)};\hat{q}_{i}^{(2)}\right)^{-|u_{i}|\delta}\\
 & \;\times\prod_{i\in\mathcal{I}_{1}(h,k)}\left(\hat{q}_{i}^{(1)},\left(\hat{q}_{i}^{(1)}\right)^{-1}\hat{q}_{i}^{(2)};\hat{q}_{i}^{(2)}\right)^{-|u_{i}|\delta}
\end{align*}
when $24\pi n+\delta\pi\Omega>0.$

\subsection{The major arcs}

Let $\Pi_{h,k}$ be defined as in Subsection \ref{subsec:1-2}. For
the major arcs $I,$ we have
\begin{align*}
I & =\sum_{(\varkappa,\kappa)\in\mathcal{L}_{>0}}\sum_{\substack{1\leq k\leq N\\
k\equiv\kappa(\text{mod}\,L)
}
}\sum_{\substack{0\leq h\leq k\\
h\equiv\varkappa(\text{mod}\,\kappa)
}
}\frac{\mathrm{i}}{k^{2}}e^{-2\pi\mathrm{i}nh/k}\mathrm{i}^{\delta\sum_{j=1}^{I}u_{j}}(-1)^{\delta\sum_{j=1}^{I}u_{j}\lambda_{m_{j},n_{j}}(h,k)}\omega_{h.k}^{\delta}\varTheta_{h,k}^{\delta}\\
 & \;\times\int_{z_{h,k}^{\prime}}^{z_{h,k}^{\prime\prime}}\exp\left(\frac{\delta\pi}{12k}\left(\Omega\frac{z}{k}+\Delta(\varkappa,\kappa)\frac{k}{z}\right)\right)\hat{G}(h,k,z)^{\delta}e^{2\pi nz/k^{2}}dz\\
 & =\sum_{(\varkappa,\kappa)\in\mathcal{L}_{>0}}\sum_{\substack{1\leq k\leq N\\
k\equiv\kappa(\text{mod}\,L)
}
}\sum_{\substack{0\leq h\leq k\\
h\equiv\varkappa(\text{mod}\,\kappa)
}
}\frac{\mathrm{i}}{k^{2}}e^{-2\pi\mathrm{i}nh/k}\mathrm{i}^{\delta\sum_{j=1}^{I}u_{j}}(-1)^{\delta\sum_{j=1}^{I}u_{j}\lambda_{m_{j},n_{j}}(h,k)}\omega_{h.k}^{\delta}\varTheta_{h,k}^{\delta}\Pi_{h,k}\\
 & \;\times\int_{z_{h,k}^{\prime}}^{z_{h,k}^{\prime\prime}}\exp\left(\frac{\delta\pi}{12k}\left(\Omega\frac{z}{k}+\Delta(\varkappa,\kappa)\frac{k}{z}\right)\right)e^{2\pi nz/k^{2}}dz\\
 & \;+\sum_{(\varkappa,\kappa)\in\mathcal{L}_{>0}}\sum_{\substack{1\leq k\leq N\\
k\equiv\kappa(\text{mod}\,L)
}
}\sum_{\substack{0\leq h\leq k\\
h\equiv\varkappa(\text{mod}\,\kappa)
}
}\frac{\mathrm{i}}{k^{2}}e^{-2\pi\mathrm{i}nh/k}\mathrm{i}^{\delta\sum_{j=1}^{I}u_{j}}(-1)^{\delta\sum_{j=1}^{I}u_{j}\lambda_{m_{j},n_{j}}(h,k)}\omega_{h.k}^{\delta}\varTheta_{h,k}^{\delta}\\
 & \;\times\int_{z_{h,k}^{\prime}}^{z_{h,k}^{\prime\prime}}\exp\left(\frac{\delta\pi}{12k}\left(\Omega\frac{z}{k}+\Delta(\varkappa,\kappa)\frac{k}{z}\right)\right)\left(\hat{G}(h,k,z)^{\delta}-\Pi_{h,k}\right)e^{2\pi nz/k^{2}}dz\\
 & =:I_{M}+I_{R}.
\end{align*}

In order to give a bound for $I_{R},$ we first need the following
two lemmas.
\begin{lem}
\label{lem:d} Let $u_{j},\cdots,u_{J}$ be nonzero integers, and
$\delta\in\mathbb{R}_{>0}.$ Then for all $s_{j},t_{j},q_{j}\in\mathbb{C}$
with $\left|s_{j}\right|,\left|t_{j}\right|\text{,\ensuremath{\left|q_{j}\right|}\ensuremath{\ensuremath{\leq}1},}$
$j=1,\cdots,J,$ we have
\[
\left|\left(\prod_{j=1}^{J}(s_{j},t_{j};q_{j})^{u_{j}}\right)^{\delta}-1\right|\leq\prod_{j=1}^{J}(|s_{j}|,|t_{j}|;|q_{j}|)^{-|u_{j}|\delta}-1.
\]
\end{lem}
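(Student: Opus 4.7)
The plan is to reduce the claim to the single-factor estimate
\[
|(1-x)^\alpha - 1| \leq (1-|x|)^{-|\alpha|} - 1 \quad (|x| \leq 1,\ \alpha \in \mathbb{R}),
\]
and then chain single factors together by a multiplicative triangle-type bound. Since
\[
(s_j, t_j; q_j) = \prod_{k=0}^\infty (1 - s_j q_j^k)(1 - t_j q_j^k),
\]
the expression $\bigl(\prod_{j=1}^J (s_j, t_j; q_j)^{u_j}\bigr)^\delta$ is an absolutely convergent product (under $|q_j|<1$, which is the relevant range in the application) of factors of the form $(1 - x)^{\pm u_j \delta}$ with $|x| \leq 1$, while the right-hand side of the lemma is the analogous product with $x$ replaced by $|x|$ and every exponent made negative. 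Thus a factor-by-factor comparison, together with an induction in the number of factors, should yield the result.

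\textbf{Step 1 (single factor).} I would expand via the generalized binomial series $(1-x)^\alpha = \sum_{n\geq 0} \binom{\alpha}{n} (-x)^n$. The elementary inequality $|\alpha - j| \leq |\alpha| + j$ for every $j \geq 0$ implies
\[
\Bigl|\binom{\alpha}{n}\Bigr| = \frac{|\alpha|\,|\alpha-1|\cdots|\alpha-n+1|}{n!} \leq \frac{|\alpha|(|\alpha|+1)\cdots(|\alpha|+n-1)}{n!} = \binom{|\alpha|+n-1}{n},
\]
and the latter coefficients are precisely those in $(1-y)^{-|\alpha|} = \sum_{n\geq 0} \binom{|\alpha|+n-1}{n} y^n$. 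Subtracting the $n=0$ term and estimating by absolute values gives both the single-factor inequality and its companion $|(1-x)^\alpha| \leq (1-|x|)^{-|\alpha|}$.

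\textbf{Step 2 (product and conclusion).} For complex $A, B$ the identity $AB - 1 = A(B-1) + (A-1)$ yields $|AB - 1| \leq |A|\,|B - 1| + |A - 1|$. Applied with $A$ a partial product and $B$ the next factor, together with the companion bound on $|A|$ from Step 1, a straightforward induction on the number of single factors produces
\[
\Bigl|\prod_i (1 - x_i)^{\alpha_i} - 1\Bigr| \leq \prod_i (1 - |x_i|)^{-|\alpha_i|} - 1.
\]
Passing from finite partial products to the infinite Pochhammer symbols is then done by continuity, using absolute convergence. \textbf{Main difficulty.} The only subtlety is choosing the branch of the real power so that the binomial expansion and the telescoping identity apply coherently across all factors. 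Because $|s_j q_j^k|, |t_j q_j^k| \leq 1$ keeps each $1 - s_j q_j^k$ and $1 - t_j q_j^k$ in the closed right half-plane (and strictly away from the negative real axis when the modulus is $<1$), the principal branch may be used uniformly; the boundary case $|s_j| = 1$ or $|t_j| = 1$ is handled by a limiting argument. Everything else is a clean combination of the binomial coefficient bound with induction.
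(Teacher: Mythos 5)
Your proof is correct and rests on the same key inequality as the paper's, namely the coefficient majorization $\left|\binom{\alpha}{n}\right|\leq\binom{|\alpha|+n-1}{n}$ (equivalently $|(\gamma)_{k}|\leq(|\gamma|)_{k}$); the only difference is bookkeeping, since the paper expands the entire product into a single multi-indexed series and compares it term by term with its majorant, whereas you prove the one-factor bound first and then chain factors via $|AB-1|\leq|A|\,|B-1|+|A-1|$ and induction. Both routes are sound; note only that when some $|s_{j}|$, $|t_{j}|$ or $|q_{j}|$ equals $1$ the right-hand side is $+\infty$ and the inequality is vacuous, so your limiting argument at the boundary is not really needed.
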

\begin{proof}
We apply the formula 
\[
(1+x)^{\alpha}=\sum_{k=0}^{\infty}\frac{\alpha(\alpha-1)(\alpha-k+1)}{k!}x^{k}
\]
to easily get 
\[
(1-x)^{-\alpha}=\sum_{k=0}^{\infty}\frac{(\alpha)_{k}}{k!}x^{k},
\]
where $(\alpha)_{j}:=\alpha(\alpha+1)\cdots(\alpha+j-1)$ and $|x|<1.$
It follows that 
\begin{equation}
\begin{aligned}(s_{j},t_{j},q_{j})^{u_{j}\delta} & =\prod_{i\geq1}(1-s_{j}q_{j}^{i-1})^{u_{j}\delta}(1-t_{j}q_{j}^{i-1})^{u_{j}\delta}\\
 & =\prod_{i\geq1}\left(\sum_{k_{ij}=0}^{\infty}\frac{(-u_{j}\delta)_{k_{ij}}}{k_{ij}!}\left(s_{j}q_{j}^{i-1}\right)^{k_{ij}}\right)\left(\sum_{l_{ij}=0}^{\infty}\frac{(-u_{j}\delta)_{l_{ij}}}{l_{ij}!}\left(t_{j}q_{j}^{i-1}\right)^{l_{ij}}\right)
\end{aligned}
\label{eq:ll1}
\end{equation}
and so 
\begin{equation}
(|s_{j}|,|t_{j}|;|q_{j}|)^{-|u_{j}|\delta}=\prod_{i\geq1}\left(\sum_{k_{ij}=0}^{\infty}\frac{(|u_{j}|\delta)_{k_{ij}}}{k_{ij}!}\left(|s_{j}||q_{j}|^{i-1}\right)^{k_{ij}}\right)\left(\sum_{l_{ij}=0}^{\infty}\frac{(|u_{j}|\delta)_{l_{ij}}}{l_{ij}!}\left(|t_{j}||q_{j}|^{i-1}\right)^{l_{ij}}\right).\label{eq:ll2}
\end{equation}
From \eqref{eq:ll1} and \eqref{eq:ll2} we deduce that 
\begin{align*}
 & \left|\left(\prod_{j=1}^{J}(s_{j},t_{j};q_{j})^{u_{j}}\right)^{\delta}-1\right|\\
 & =\left|\prod_{j=1}^{J}\prod_{i\geq1}\left(\sum_{k_{ij}=0}^{\infty}\frac{(-u_{j}\delta)_{k_{ij}}}{k_{ij}!}\left(s_{j}q_{j}^{i-1}\right)^{k_{ij}}\right)\left(\sum_{l_{ij}}^{\infty}\frac{(-u_{j}\delta)_{l_{ij}}}{l_{ij}!}\left(t_{j}q_{j}^{i-1}\right)^{l_{ij}}\right)-1\right|\\
 & =\left|\overset{*}{\sum}\prod_{j=1}^{J}\prod_{i\geq1}\left(\frac{(-u_{j}\delta)_{k_{ij}}}{k_{ij}!}\left(s_{j}q_{j}^{i-1}\right)^{k_{ij}}\right)\left(\frac{(-u_{j}\delta)_{l_{ij}}}{l_{ij}!}\left(t_{j}q_{j}^{i-1}\right)^{l_{ij}}\right)\right|\\
 & \leq\overset{*}{\sum}\prod_{j=1}^{J}\prod_{i\geq1}\left(\frac{(|u_{j}|\delta)_{k_{ij}}}{k_{ij}!}\left(|s_{j}||q_{j}|^{i-1}\right)^{k_{ij}}\right)\left(\frac{(|u_{j}|\delta)_{l_{ij}}}{l_{ij}!}\left(|t_{j}||q_{j}|^{i-1}\right)^{l_{ij}}\right)\\
 & =\prod_{j=1}^{J}\prod_{i\geq1}\left(\sum_{k_{ij}=0}^{\infty}\frac{(|u_{j}|\delta)_{k_{ij}}}{k_{ij}!}\left(|s_{j}||q_{j}|^{i-1}\right)^{k_{ij}}\right)\left(\sum_{l_{ij}=0}^{\infty}\frac{(|u_{j}|\delta)_{l_{ij}}}{l_{ij}!}\left(|t_{j}||q_{j}|^{i-1}\right)^{l_{ij}}\right)-1\\
 & =\prod_{j=1}^{J}(|s_{j}|,|t_{j}|;|q_{j}|)^{-|u_{j}|\delta}-1,
\end{align*}
where we have used the inequality $\left|(\gamma)_{k}\right|\leq(\left|\gamma\right|)_{k}$
and $\overset{*}{\sum}$ denotes a summation where $k_{ij},l_{ij}\geq0$
for $i\geq1,1\leq j\leq n,$ and at least one of the elements in the
set $\{k_{ij}|i\geq1,1\leq j\leq n\}\cup\{l_{ij}|i\geq1,1\leq j\leq n\}$
is non-zero. This concludes the proof.
\end{proof}
\begin{lem}
\label{lem:16} Suppose that $\delta>0,\Delta(\varkappa,\kappa)>0$
and $24\pi n+\delta\pi\Omega>0.$ If the inequality 
\begin{equation}
\min_{1\leq i\leq I}\left(\varUpsilon\left(\lambda_{m_{i},n_{i}}^{*}(h,k)\right)\frac{d_{i}^{2}}{n_{i}}\right)\geq\frac{\delta\Delta(\varkappa,\kappa)}{24}\label{eq:lem5}
\end{equation}
holds, then we have 
\begin{align*}
 & \exp\left(\frac{\delta\pi}{12}\Delta(\varkappa,\kappa)\Re\left(\frac{1}{z}\right)\right)\left|\frac{1}{\Pi_{h,k}}\hat{G}(h,k,z)^{\delta}-1\right|\\
 & \leq\exp\left(\frac{\delta\pi}{12}\Delta(\varkappa,\kappa)\right)\times\left(\prod_{i\in\mathcal{I}_{0}(h,k)}\left(\hat{q}_{i}^{(1)}\hat{q}_{i}^{(2)},\left(\hat{q}_{i}^{(1)}\right)^{-1}\hat{q}_{i}^{(2)};\hat{q}_{i}^{(2)}\right)^{-|u_{i}|\delta}\right.\\
 & \qquad\qquad\left.\times\prod_{i\in\mathcal{I}_{1}(h,k)}\left(\hat{q}_{i}^{(1)},\left(\hat{q}_{i}^{(1)}\right)^{-1}\hat{q}_{i}^{(2)};\hat{q}_{i}^{(2)}\right)^{-|u_{i}|\delta}-1\right).
\end{align*}
\end{lem}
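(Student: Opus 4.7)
The plan is to reduce the claim to a comparison of power series and then use the hypothesis to dominate the $\Re(1/z)$-dependence. The three main steps are: (i) rewrite $\Pi_{h,k}^{-1}\hat{G}(h,k,z)^\delta$ as a product of ordinary $q$-Pochhammer symbols; (ii) apply Lemma \ref{lem:d} to bound its difference from $1$ in terms of the same product evaluated at moduli; and (iii) use the hypothesis \eqref{eq:lem5} to pass from $\Re(1/z)$ to $1$, at the cost of the exponential factor $\exp\bigl(\frac{\delta\pi\Delta(\varkappa,\kappa)}{12}\bigr)$.

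For step (i), since $H(\zeta,q)^{-u}=(\zeta,\zeta^{-1}q;q)_\infty^{u}$, we have $\hat{G}(h,k,z)^\delta = \prod_{i=1}^I(q_i^{(1)},(q_i^{(1)})^{-1}q_i^{(2)};q_i^{(2)})_\infty^{u_i\delta}$. For $i\in\mathcal{I}_0(h,k)$, the computation in the proof of Proposition \ref{prop:2} shows that $q_i^{(1)}$ is a root of unity independent of $z$; using the factorization $(q_i^{(1)};q_i^{(2)})_\infty=(1-q_i^{(1)})(q_i^{(1)}q_i^{(2)};q_i^{(2)})_\infty$, we absorb $(1-q_i^{(1)})^{u_i\delta}$ into $\Pi_{h,k}$ and obtain
\[
\frac{\hat{G}(h,k,z)^\delta}{\Pi_{h,k}}=\prod_{i\in\mathcal{I}_0}(q_i^{(1)}q_i^{(2)},(q_i^{(1)})^{-1}q_i^{(2)};q_i^{(2)})_\infty^{u_i\delta}\prod_{i\in\mathcal{I}_1}(q_i^{(1)},(q_i^{(1)})^{-1}q_i^{(2)};q_i^{(2)})_\infty^{u_i\delta}.
\]
Using $\Re(1/z)\geq 1$ from \eqref{eq:Re} together with $\lambda_{m_i,n_i}^{*}(h,k)\in(0,1)$ for $i\in\mathcal{I}_1$, each argument has modulus at most $1$, so Lemma \ref{lem:d} yields $|\hat{G}(h,k,z)^\delta/\Pi_{h,k}-1|\leq P(r)-1$, where $r:=\Re(1/z)\geq 1$ and $P(r)$ denotes the above product with every entry replaced by its modulus.

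For step (ii), expanding $(1-x)^{-|u_i|\delta}=\sum_{k\geq 0}\frac{(|u_i|\delta)_k}{k!}x^k$, we write $P(r)-1$ as an absolutely convergent series whose non-trivial monomials all have the form $C\prod_i\exp(-2\pi d_i^2 r\alpha_i/n_i)$, with strictly positive combinatorial coefficient $C$ and non-negative exponents $\alpha_i$ not all zero. A direct inspection of how each factor contributes shows that for each $i$ with $\alpha_i>0$ one has $\alpha_i\geq\varUpsilon(\lambda_{m_i,n_i}^{*}(h,k))$ when $i\in\mathcal{I}_1$ (the extremal cases come from a single $k_{i,0}=1$ or $l_{i,0}=1$, contributing $\lambda_{m_i,n_i}^{*}$ or $1-\lambda_{m_i,n_i}^{*}$ respectively) and $\alpha_i\geq 1$ when $i\in\mathcal{I}_0$. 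Invoking \eqref{eq:lem5}, the total exponent in every non-trivial monomial satisfies
\[
2\pi r\sum_i\frac{d_i^2\alpha_i}{n_i}\;\geq\;2\pi r\cdot\frac{\delta\Delta(\varkappa,\kappa)}{24}\;=\;\frac{\delta\pi\Delta(\varkappa,\kappa)r}{12}.
\]

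For step (iii), factor each monomial as $\exp(-2\pi rX)=\exp(-2\pi X)\exp(-2\pi(r-1)X)$, where $X\geq\delta\Delta(\varkappa,\kappa)/24$ is the per-monomial exponent sum; since $r\geq 1$, we get $\exp(-2\pi(r-1)X)\leq\exp(-\delta\pi\Delta(\varkappa,\kappa)(r-1)/12)$, so every monomial of $P(r)-1$ is bounded by the corresponding monomial of $P(1)-1$ times $\exp(-\delta\pi\Delta(\varkappa,\kappa)(r-1)/12)$. Summing and multiplying through by $\exp(\delta\pi\Delta(\varkappa,\kappa)r/12)$ produces the claimed inequality, since $P(1)$ is exactly the product of $\hat{q}$-Pochhammers appearing on the right-hand side of the lemma. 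The main obstacle is the exponent accounting of step (ii): one must track which choices of $(k_{ij},l_{ij})$ produce each individual $\alpha_i$ and confirm the uniform lower bound $\varUpsilon(\lambda_{m_i,n_i}^{*}(h,k))$ over all non-trivial multi-indices; once this bookkeeping is complete, step (iii) reduces to the elementary termwise comparison above.
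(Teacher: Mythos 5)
Your proposal is correct and follows essentially the same route as the paper: the same splitting of the $\mathcal{I}_{0}$-factors via $(q_i^{(1)};q_i^{(2)})_\infty=(1-q_i^{(1)})(q_i^{(1)}q_i^{(2)};q_i^{(2)})_\infty$, the same monomial expansion with positive coefficients coming from Lemma \ref{lem:d}, the same lower bound $\varUpsilon(\lambda_{m_i,n_i}^{*}(h,k))d_i^2/n_i$ on the exponent of every non-trivial monomial, and the same conclusion that the prefactor times the error is termwise maximized at $\Re(1/z)=1$. Your step (iii) merely makes explicit the termwise comparison that the paper states as ``maximized when $\Re(1/z)=1$,'' so there is no substantive difference.
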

\begin{proof}
For convience, we set
\[
p_{i}=\begin{cases}
q_{i}^{(1)}q_{i}^{(2)}, & i\in\mathcal{I}_{0}(h,k),\\
q_{i}^{(1)}, & i\in\mathcal{I}_{1}(h,k).
\end{cases}
\]
Then 
\begin{align*}
 & \left|\frac{1}{\Pi_{h,k}}\hat{G}(h,k,z)^{\delta}-1\right|=\left|\prod_{i=1}^{I}\left(p_{i},\left(q_{i}^{(1)}\right)^{-1}q_{i}^{(2)};q_{i}^{(2)}\right)^{u_{i}\delta}-1\right|\\
 & \leq\overset{*}{\sum}\prod_{i=1}^{I}\prod_{j\geq1}\left(\frac{(|u_{i}|\delta)_{k_{ij}}}{k_{ij}!}\left(|p_{i}|\left|q_{i}^{(2)}\right|^{j-1}\right)^{k_{ij}}\right)\left(\frac{(|u_{i}|\delta)_{l_{ij}}}{l_{ij}!}\left(\left|\left(q_{i}^{(1)}\right)^{-1}q_{i}^{(2)}\right|\left|q_{i}^{(2)}\right|^{j-1}\right)^{l_{ij}}\right)\\
 & =\overset{*}{\sum}\prod_{i=1}^{I}\prod_{j\geq1}\left(\frac{(|u_{i}|\delta)_{k_{ij}}}{k_{ij}!}\right)\left(\frac{(|u_{i}|\delta)_{l_{ij}}}{l_{ij}!}\right)\\
 & \;\times\exp\left(-2\pi\Re\left(\frac{1}{z}\right)\sum_{i=1}^{I}\sum_{j\geq1}\frac{d_{i}^{2}}{n_{i}}\left(\Phi(\lambda_{m_{i},n_{i}}^{*}(h,k))k_{ij}+(1-\lambda_{m_{i},n_{i}}^{*}(h,k))l_{ij}+2(j-1)(k_{ij}+l_{ij})\right)\right)
\end{align*}

Let 
\[
\varPsi(\varkappa,\kappa):=\sum_{i=1}^{I}\sum_{j\geq1}\frac{d_{i}^{2}}{n_{i}}\left(\Phi(\lambda_{m_{i},n_{i}}^{*}(h,k))k_{ij}+(1-\lambda_{m_{i},n_{i}}^{*}(h,k))l_{ij}+2(j-1)(k_{ij}+l_{ij})\right).
\]
Then
\begin{align*}
 & \exp\left(\frac{\delta\pi}{12}\Delta(\varkappa,\kappa)\Re\left(\frac{1}{z}\right)\right)\left|\frac{1}{\Pi_{h,k}}\hat{G}(h,k,z)^{\delta}-1\right|\\
 & \leq\overset{*}{\sum}\prod_{i=1}^{I}\prod_{j\geq1}\left(\frac{(|u_{i}|\delta)_{k_{ij}}}{k_{ij}!}\right)^{2}\times\exp\left(-2\pi\Re\left(\frac{1}{z}\right)\left(-\frac{\delta\Delta(\varkappa,\kappa)}{24}+\varPsi(\varkappa,\kappa)\right)\right).
\end{align*}
Since at least one of the elements in the set $\{k_{ij}|i\geq1,1\leq j\leq n\}\cup\{l_{ij}|i\geq1,1\leq j\leq n\}$
is non-zero, we use \eqref{eq:lem5} to obtain that
\begin{align*}
 & -\frac{\delta\Delta(\varkappa,\kappa)}{24}+\varPsi(\varkappa,\kappa)\\
 & \geq-\frac{\delta\Delta(\varkappa,\kappa)}{24}+\min_{1\leq i\leq I}\left(\min\left\{ \Phi(\lambda_{m_{i},n_{i}}^{*}(h,k)),1-\lambda_{m_{i},n_{i}}^{*}(h,k))\right\} \frac{d_{i}^{2}}{n_{i}}\right)\\
 & =-\frac{\delta\Delta(\varkappa,\kappa)}{24}+\min_{1\leq i\leq I}\left(\varUpsilon\left(\lambda_{m_{i},n_{i}}^{*}(h,k)\right)\frac{d_{i}^{2}}{n_{i}}\right)\geq0.
\end{align*}
 It follows that 
\[
\exp\left(\frac{\delta\pi}{12}\Delta(\varkappa,\kappa)\Re\left(\frac{1}{z}\right)\right)\left|\frac{1}{\Pi_{h,k}}\hat{G}(h,k,z)^{\delta}-1\right|
\]
is maximized when $\Re\left(\frac{1}{z}\right)=1$ and so
\begin{align*}
 & \exp\left(\frac{\delta\pi}{12}\Delta(\varkappa,\kappa)\Re\left(\frac{1}{z}\right)\right)\left|\frac{1}{\Pi_{h,k}}\hat{G}(h,k,z)^{\delta}-1\right|\\
 & \leq\exp\left(\frac{\delta\pi}{12}\Delta(\varkappa,\kappa)\right)\times\left(\prod_{i\in\mathcal{I}_{0}(h,k)}\left(\hat{q}_{i}^{(1)}\hat{q}_{i}^{(2)},\left(\hat{q}_{i}^{(1)}\right)^{-1}\hat{q}_{i}^{(2)};\hat{q}_{i}^{(2)}\right)^{-|u_{i}|\delta}\right.\\
 & \qquad\qquad\qquad\qquad\qquad\left.\times\prod_{i\in\mathcal{I}_{1}(h,k)}\left(\hat{q}_{i}^{(1)},\left(\hat{q}_{i}^{(1)}\right)^{-1}\hat{q}_{i}^{(2)};\hat{q}_{i}^{(2)}\right)^{-|u_{i}|\delta}-1\right).
\end{align*}
This completes the proof.
\end{proof}
We now turn to bounding $I_{R}.$

For each $h/k\in\mathcal{L}_{>0},$ when $24\pi n+\delta\pi\Omega>0,$
by Lemma \ref{lem:d} we have 
\begin{align*}
|I_{R}| & \leq\sum_{(\varkappa,\kappa)\in\mathcal{L}_{>0}}\sum_{\substack{1\leq k\leq N\\
k\equiv\kappa(\text{mod}\,L)
}
}\sum_{\substack{0\leq h\leq k\\
h\equiv\varkappa(\text{mod}\,\kappa)
}
}\frac{1}{k^{2}}\int_{z_{h,k}^{\prime}}^{z_{h,k}^{\prime\prime}}\exp\left(\frac{24\pi n+\delta\pi\Omega}{12k^{2}}\Re(z)\right)\\
 & \quad\times\exp\left(\frac{\delta\pi}{12}\Delta(\varkappa,\kappa)\Re\left(\frac{1}{z}\right)\right)\Pi_{h,k}\left|\frac{1}{\Pi_{h,k}}\hat{G}(h,k,z)^{\delta}-1\right|dz\\
 & <\sum_{(\varkappa,\kappa)\in\mathcal{L}_{>0}}\sum_{\substack{1\leq k\leq N\\
k\equiv\kappa(\text{mod}\,L)
}
}\sum_{\substack{0\leq h\leq k\\
h\equiv\varkappa(\text{mod}\,\kappa)
}
}\frac{1}{k^{2}}\int_{z_{h,k}^{\prime}}^{z_{h,k}^{\prime\prime}}\exp\left(\frac{24\pi n+\delta\pi\Omega}{6N^{2}}\right)\\
 & \quad\times\exp\left(\frac{\delta\pi}{12}\Delta(\varkappa,\kappa)\Re\left(\frac{1}{z}\right)\right)\Pi_{h,k}\\
 & \quad\times\left(\prod_{i\in\mathcal{I}_{0}(h,k)}\left(\left|q_{i}^{(1)}\right|\left|q_{i}^{(2)}\right|,\left|q_{i}^{(1)}\right|^{-1}\left|q_{i}^{(2)}\right|;\left|q_{i}^{(2)}\right|\right)^{-|u_{i}|\delta}\right.\\
 & \qquad\quad\left.\times\prod_{i\in\mathcal{I}_{1}(h,k)}\left(\left|q_{i}^{(1)}\right|,\left|q_{i}^{(1)}\right|^{-1}\left|q_{i}^{(2)}\right|;\left|q_{i}^{(2)}\right|\right)^{-|u_{i}|\delta}-1\right)\\
 & \leq\sum_{(\varkappa,\kappa)\in\mathcal{L}_{>0}}\sum_{\substack{1\leq k\leq N\\
k\equiv\kappa(\text{mod}\,L)
}
}\sum_{\substack{0\leq h\leq\kappa\\
h\equiv\varkappa(\text{mod}\,\kappa)
}
}\frac{2}{k(N+1)}\exp\left(\frac{24\pi n+\delta\pi\Omega}{6N^{2}}\right)\\
 & \quad\times\prod_{i\in\mathcal{I}_{0}^{+}(h,k)}2^{u_{i}\delta}\prod_{i\in\mathcal{I}_{0}^{-}(h,k)}\left|1-e^{2\pi\mathrm{i}/n_{i}}\right|^{u_{i}\delta}\exp\left(\frac{\delta\pi}{12}\Delta(\varkappa,\kappa)\right)\\
 & \quad\times\left(\prod_{i\in\mathcal{I}_{0}(h,k)}\left(\hat{q}_{i}^{(1)}\hat{q}_{i}^{(2)},\left(\hat{q}_{i}^{(1)}\right)^{-1}\hat{q}_{i}^{(2)};\hat{q}_{i}^{(2)}\right)^{-|u_{i}|\delta}\right.\\
 & \qquad\quad\left.\times\prod_{i\in\mathcal{I}_{1}(h,k)}\left(\hat{q}_{i}^{(1)},\left(\hat{q}_{i}^{(1)}\right)^{-1}\hat{q}_{i}^{(2)};\hat{q}_{i}^{(2)}\right)^{-|u_{i}|\delta}-1\right),
\end{align*}
where, in the last inequality, we have used Lemma \ref{lem:16}.

Next we estimate $I_{M}.$ Let 
\[
I_{h.k}:=\int_{z_{h,k}^{\prime}}^{z_{h,k}^{\prime\prime}}\exp\left(\frac{\delta\pi}{12k}\left(\Omega\frac{z}{k}+\Delta(\varkappa,\kappa)\frac{k}{z}\right)\right)e^{2\pi nz/k^{2}}dz
\]
and let $K^{-}$denote the integration path along the whole circle
\[
\left|z-\frac{1}{2}\right|=\frac{1}{2}
\]
in negative direction. Then we have 
\[
I_{h.k}=\left(\int_{K^{-}}-\int_{0}^{z_{h,k}^{\prime}}-\int_{z_{h,k}^{\prime\prime}}^{0}\right)\exp\left(\frac{\delta\pi}{12k}\left(\Omega\frac{z}{k}+\Delta(\varkappa,\kappa)\frac{k}{z}\right)\right)e^{2\pi nz/k^{2}}dz.
\]
Using \eqref{eq:Re}, we know that the lengths of the arcs from $0$
to $z_{h,k}^{\prime}$ and from $z_{h,k}^{\prime\prime}$ to $0$
are respectively less than
\[
\frac{\pi}{2}\left|z_{h,k}^{\prime}\right|\leq\frac{\pi}{2}\frac{\sqrt{2}k}{N},\quad\text{and\ensuremath{\quad}\ensuremath{\frac{\pi}{2}\left|z_{h,k}^{\prime\prime}\right|\leq\frac{\pi}{2}\frac{\sqrt{2}k}{N}},}
\]
so we have 
\begin{align*}
\left|\left(\int_{0}^{z_{h,k}^{\prime}}+\int_{z_{h,k}^{\prime\prime}}^{0}\right)\exp\left(\frac{\delta\pi}{12k}\left(\Omega\frac{z}{k}+\Delta(\varkappa,\kappa)\frac{k}{z}\right)\right)e^{2\pi nz/k^{2}}dz\right|\\
\quad\leq\frac{\pi\sqrt{2}k}{N}\exp\left(\frac{24\pi n+\delta\pi\Omega}{6N^{2}}+\frac{\delta\pi}{12}\Delta(\varkappa,\kappa)\right)
\end{align*}
when $24\pi n+\delta\pi\Omega>0.$ 

In order to simplify $\int_{K^{-}}\exp\left(\frac{\delta\pi}{12k}\left(\Omega\frac{z}{k}+\Delta(\varkappa,\kappa)\frac{k}{z}\right)\right)e^{2\pi nz/k^{2}}dz$,
we first recall the definition of the modified Bessel function $I_{-1}(z)$
of the first kind given by 
\[
I_{-1}(z):=\sum_{n\geq0}\frac{1}{n!(n+1)!}\left(\frac{z}{2}\right)^{2n+1}.
\]
See, for example, \cite[p.222, Eq.(4.12.2)]{AAR}. From \cite[p.236, Exercise13]{AAR},
we know that 
\[
I_{-1}(z)=\frac{(z/2)}{2\pi i}\int_{1-\mathrm{i}\infty}^{1+\mathrm{i}\infty}\frac{1}{t^{2}}e^{t+z^{2}/4t}dt.
\]
Let $w=\frac{1}{z}$ in the integral $\int_{K^{-}}.$ Then 
\begin{align*}
 & \int_{K^{-}}\exp\left(\frac{\delta\pi}{12k}\left(\Omega\frac{z}{k}+\Delta(\varkappa,\kappa)\frac{k}{z}\right)\right)e^{2\pi nz/k^{2}}dz\\
 & \;=-\int_{1-\mathrm{i}\infty}^{1+\mathrm{i}\infty}\frac{1}{w^{2}}\exp\left(\frac{24\pi n+\delta\pi\Omega}{12k^{2}}\frac{1}{w}+\frac{\delta\pi}{12}\Delta(\varkappa,\kappa)w\right)dw\quad\left(t=\frac{\delta\pi}{12}\Delta(\varkappa,\kappa)w\right)\\
 & \;=-\frac{\delta\pi}{12}\Delta(\varkappa,\kappa)\int_{1-\mathrm{i}\infty}^{1+\mathrm{i}\infty}\frac{1}{t^{2}}\exp\left(t+\frac{24\pi n+\delta\pi\Omega}{12k^{2}}\frac{\delta\pi}{12}\Delta(\varkappa,\kappa)\frac{1}{t}\right)dt\\
 & \;=-\frac{2\delta k\pi\mathrm{i}\sqrt{\Delta(\varkappa,\kappa)}}{\sqrt{(24n\delta+\delta^{2}\Omega)}}I_{-1}\left(\frac{\pi}{6k}\sqrt{\text{\ensuremath{\Delta(\varkappa,\kappa)}}(24n\delta+\delta^{2}\Omega)}\right).
\end{align*}
From this we derive that
\begin{align*}
I_{M} & =\sum_{(\varkappa,\kappa)\in\mathcal{L}_{>0}}\sum_{\substack{1\leq k\leq N\\
k\equiv\kappa\,\left(\text{mod}\,L\right)
}
}\sum_{\substack{0\leq h\leq k\\
h\equiv\varkappa\,\left(\text{mod}\,\kappa\right)
}
}\frac{\mathrm{i}}{k^{2}}e^{-2\pi inh/k}\mathrm{i}^{\delta\sum_{j=1}^{I}u_{j}}(-1)^{\delta\sum_{j=1}^{I}u_{j}\lambda_{m_{j},n_{j}}(h,k)}\\
 & \;\times\omega_{h.k}^{\delta}\varTheta_{h,k}^{\delta}\Pi_{h,k}\int_{z_{h,k}^{\prime}}^{z_{h,k}^{\prime\prime}}\exp\left(\frac{\delta\pi}{12k}\left(\Omega\frac{z}{k}+\Delta(\varkappa,\kappa)\frac{k}{z}\right)\right)e^{2\pi nz/k^{2}}dz\\
 & =I_{MM}+I_{MR},
\end{align*}
where 
\begin{align*}
I_{MM} & :=\sum_{(\varkappa,\kappa)\in\mathcal{L}_{>0}}\sum_{\substack{1\leq k\leq N\\
k\equiv\kappa\,\left(\text{mod}\,L\right)
}
}\sum_{\substack{0\leq h\leq k\\
h\equiv\varkappa\,\left(\text{mod}\,\kappa\right)
}
}\frac{2\delta\pi}{k}e^{-2\pi\mathrm{i}nh/k}\mathrm{i}^{\delta\sum_{j=1}^{I}u_{j}}(-1)^{\delta\sum_{j=1}^{I}u_{j}\lambda_{m_{j},n_{j}}(h,k)}\\
 & \;\times\omega_{h.k}^{\delta}\varTheta_{h,k}^{\delta}\Pi_{h,k}\left(\frac{\Delta(\varkappa,\kappa)}{24n\delta+\delta^{2}\Omega}\right)^{1/2}I_{-1}\left(\frac{\pi}{6k}\sqrt{\text{\ensuremath{\Delta(\varkappa,\kappa)}}(24n\delta+\delta^{2}\Omega)}\right),
\end{align*}
and 
\begin{align*}
\left|I_{MR}\right| & \leq\sum_{(\varkappa,\kappa)\in\mathcal{L}_{>0}}\sum_{\substack{1\leq k\leq N\\
k\equiv\kappa\,\left(\text{mod}\,L\right)
}
}\sum_{\substack{0\leq h\leq k\\
h\equiv\varkappa\,\left(\text{mod}\,\kappa\right)
}
}\frac{\pi\sqrt{2}}{kN}\prod_{i\in\mathcal{I}_{0}^{+}(h,k)}2^{u_{i}\delta}\\
 & \;\times\prod_{i\in\mathcal{I}_{0}^{-}(h,k)}\left|1-e^{2\pi\mathrm{i}/n_{i}}\right|^{u_{i}\delta}\exp\left(\frac{24\pi n+\delta\pi\Omega}{6N^{2}}+\frac{\delta\pi}{12}\Delta(\varkappa,\kappa)\right).
\end{align*}

Combining the above results for $I$ and $E$ we obtain Theorem \ref{thm:1}
readily.

\section{\label{sec:4} Applications}

In this section, we apply Theorem \ref{thm:1} to show Theorems \ref{thm:6}--\ref{thm:12}.

\subsection{Proof of Theorem \ref{thm:6}}

Let $Q_{5}(q)$ be defined as in Conjecture \ref{conj:1}. Then ${\bf m}=\{1,2\},$
${\bf n}=\{5,5\},$ and ${\bf u}=\{1,-1\}.$ Hence $L=5,$ and $\Omega=24/5.$
It is easy to find that 
\[
\mathcal{L}_{>0}=\{(2,5),(3,5)\}.
\]
From this we deduce that
\[
\max\{\sqrt{\Delta(\varkappa,\kappa)}/k|(\varkappa,\kappa)\in\mathcal{L}_{>0},k\equiv\kappa\;\text{(mod\;}L)\}=\frac{2\sqrt{6}}{5\sqrt{5}}
\]
and 
\[
\sqrt{\Delta(\varkappa,\kappa)}/k
\]
attains its maximum when $(\varkappa,\kappa,k)=(2,5,5),\;(3,5,5).$

When $(\varkappa,\kappa)=(2,5),(3,5),$ we easily find that 
\[
\frac{24}{\Delta(\varkappa,\kappa)}\varUpsilon\left(\lambda_{m_{i},n_{i}}^{*}(h,k)\right)\frac{d_{i}^{2}}{n_{i}}\geq\delta
\]
for $i=1,2$ and $0<\delta\leq5.$ We let 
\begin{align*}
Q_{5}^{\delta}(q) & =:\sum_{n=1}^{\infty}c_{\delta}^{(5)}(n)q^{n},\\
N & =\left\lceil \sqrt{4\pi\left(n+\frac{\delta}{5}\right)}\right\rceil ,
\end{align*}
and
\[
\hat{c}_{\delta}^{(5)}(n):=\frac{4\pi\sqrt{\delta}}{5\sqrt{5}}\cos\left(\frac{4\pi}{5}\left(n+\frac{3\delta}{20}\right)\right)\left(n+\frac{\delta}{5}\right)^{-1/2}I_{-1}\left(\frac{4\pi\sqrt{\delta}}{5\sqrt{5}}\sqrt{n+\frac{\delta}{5}}\right).
\]
Then we have 
\begin{align*}
c_{\delta}^{(5)}(n)-\hat{c}_{\delta}^{(5)}(n) & =\sum_{\substack{5<k\leq N\\
k\equiv5\,\left(\text{mod}\,5\right)
}
}\sum_{\substack{0\leq h\leq k\\
h\equiv2\,\left(\text{mod}\,5\right)
}
}\frac{2\pi\delta^{1/2}}{k}e^{-2\pi\mathrm{i}nh/k}\mathrm{i}^{\delta\sum_{j=1}^{I}u_{j}}(-1)^{\delta\sum_{j=1}^{I}u_{j}\lambda_{m_{j},n_{j}}(h,k)}\\
 & \;\times\omega_{h.k}^{\delta}\varTheta_{h,k}^{\delta}\Pi_{h,k}\left(n+\frac{\delta}{5}\right)^{-1/2}I_{-1}\left(\frac{4\pi\sqrt{\delta}}{\sqrt{5}k}\sqrt{n+\frac{\delta}{5}}\right)\\
 & +\sum_{\substack{5<k\leq N\\
k\equiv5\,\left(\text{mod}\,5\right)
}
}\sum_{\substack{0\leq h\leq k\\
h\equiv3\,\left(\text{mod}\,5\right)
}
}\frac{2\pi\delta^{1/2}}{k}e^{-2\pi\mathrm{i}nh/k}\mathrm{i}^{\delta\sum_{j=1}^{I}u_{j}}(-1)^{\delta\sum_{j=1}^{I}u_{j}\lambda_{m_{j},n_{j}}(h,k)}\\
 & \;\times\omega_{h.k}^{\delta}\varTheta_{h,k}^{\delta}\Pi_{h,k}\left(n+\frac{\delta}{5}\right)^{-1/2}I_{-1}\left(\frac{4\pi\sqrt{\delta}}{\sqrt{5}k}\sqrt{n+\frac{\delta}{5}}\right)+E_{\delta,N}(n).
\end{align*}
For the $E_{\delta,N}(n),$ we have  
\begin{align*}
\left|E_{\delta,N}(n)\right| & \leq\sum_{(\varkappa,\kappa)\in\mathcal{L}_{\leq0}}2^{1+\delta}\exp\left(1+\frac{\delta\pi}{12}\Delta(\varkappa,\kappa)\right)\left|1-e^{2\pi\mathrm{i}/5}\right|^{-\delta}\\
 & \;\times\prod_{i\in\mathcal{I}_{0}(\varkappa,\kappa)}\left(\hat{q}_{i}^{(1)}\hat{q}_{i}^{(2)},\left(\hat{q}_{i}^{(1)}\right)^{-1}\hat{q}_{i}^{(2)};\hat{q}_{i}^{(2)}\right)^{-|u_{i}|\delta}\\
 & \;\times\prod_{i\in\mathcal{I}_{1}(\varkappa,\kappa)}\left(\hat{q}_{i}^{(1)},\left(\hat{q}_{i}^{(1)}\right)^{-1}\hat{q}_{i}^{(2)};\hat{q}_{i}^{(2)}\right)^{-|u_{i}|\delta}\\
 & +\sum_{(\varkappa,\kappa)\in\mathcal{L}_{>0}}2^{\delta}e\left|1-e^{2\pi\mathrm{i}/5}\right|^{-\delta}\exp\left(\frac{\delta\pi}{12}\Delta(\varkappa,\kappa)\right)\\
 & \quad\times\left(2\prod_{i\in\mathcal{I}_{0}(\varkappa,\kappa)}\left(\hat{q}_{i}^{(1)}\hat{q}_{i}^{(2)},\left(\hat{q}_{i}^{(1)}\right)^{-1}\hat{q}_{i}^{(2)};\hat{q}_{i}^{(2)}\right)^{-|u_{i}|\delta}\right.\\
 & \qquad\quad\left.\times\prod_{i\in\mathcal{I}_{1}(\varkappa,\kappa)}\left(\hat{q}_{i}^{(1)},\left(\hat{q}_{i}^{(1)}\right)^{-1}\hat{q}_{i}^{(2)};\hat{q}_{i}^{(2)}\right)^{-|u_{i}|\delta}-2+\sqrt{2}\pi\right).
\end{align*}
Using the software \emph{Mathematica}, we get 
\begin{align*}
\left|E_{\delta,N}(n)\right| & \leq54.366\times10.372^{\delta}+5.437\left(1.702^{\delta}+0.486^{\delta}+0.485^{\delta}\right)\\
 & \;+10.874\times5.978^{\delta}\left(2.443+1.002^{\delta}\right).
\end{align*}
From the above we derive that
\begin{align*}
 & \left|c_{\delta}^{(5)}(n)-\hat{c}_{\delta}^{(5)}(n)\right|\\
 & \leq\left|E_{\delta,N}(n)\right|+\sum_{\substack{5<k\leq N\\
k\equiv5\,\left(\text{mod}\,5\right)
}
}\sum_{\substack{0\leq h\leq k\\
h\equiv2\,\left(\text{mod}\,5\right)
}
}\frac{2\pi\delta^{1/2}}{k}\left|\Pi_{h,k}\right|\left(n+\frac{\delta}{5}\right)^{-1/2}I_{-1}\left(\frac{4\pi\sqrt{\delta}}{\sqrt{5}k}\sqrt{n+\frac{\delta}{5}}\right)\\
 & +\sum_{\substack{5<k\leq N\\
k\equiv5\,\left(\text{mod}\,5\right)
}
}\sum_{\substack{0\leq h\leq k\\
h\equiv3\,\left(\text{mod}\,5\right)
}
}\frac{2\pi\delta^{1/2}}{k}\left|\Pi_{h,k}\right|\left(n+\frac{\delta}{5}\right)^{-1/2}I_{-1}\left(\frac{4\pi\sqrt{\delta}}{\sqrt{5}k}\sqrt{n+\frac{\delta}{5}}\right)\\
 & \leq\sum_{\substack{5<k\leq N\\
k\equiv5\,\left(\text{mod}\,5\right)
}
}8\delta^{1/2}\pi\left|\frac{1}{1-e^{\frac{2\pi\mathrm{i}}{5}}}\right|\left(n+\frac{\delta}{5}\right)^{-1/2}I_{-1}\left(\frac{4\pi\sqrt{\delta}}{\sqrt{5}k}\sqrt{n+\frac{\delta}{5}}\right)\\
 & =8\delta^{1/2}\pi\left|\frac{1}{1-e^{\frac{2\pi\mathrm{i}}{5}}}\right|\left(n+\frac{\delta}{5}\right)^{-1/2}\sum_{1<k^{\prime}\leq N/5}I_{-1}\left(\frac{4\pi\sqrt{\delta}}{5\sqrt{5}k^{\prime}}\sqrt{n+\frac{\delta}{5}}\right).
\end{align*}

In order to bound 
\[
\left|\frac{c_{\delta}^{(5)}(n)-\hat{c}_{\delta}^{(5)}(n)}{\hat{c}_{\delta}^{(5)}(n)}\right|,
\]
 we need the following two lemmas.
\begin{lem}
\label{lem:z} \cite[Lemma 8]{SZ} For any real $x>0$ and integer
$y>2$ we have 
\[
\sum_{2\leq k\leq y}I_{-1}\left(\frac{2x}{k}\right)\leq x\log y+2I_{-1}(x)-\left(2-\gamma-\frac{1}{2y}\right)x.
\]
Here $\gamma=0.577216\cdots$ is the Euler-Mascheroni constant.
\end{lem}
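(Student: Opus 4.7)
The plan is to expand the modified Bessel function as a power series and interchange the order of summation. Starting from
\[
I_{-1}(u) = \sum_{n\ge 0} \frac{(u/2)^{2n+1}}{n!\,(n+1)!},
\]
one obtains $\sum_{k=2}^{y} I_{-1}(2x/k) = \sum_{n\ge 0} \frac{x^{2n+1}}{n!\,(n+1)!} \sum_{k=2}^{y} k^{-(2n+1)}$. I would treat the $n=0$ term and the tail $n\ge 1$ separately, so that the harmonic-sum piece produces the $\log y$ part and the tail reassembles into the Bessel function $I_{-1}(x)$.

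The $n=0$ term contributes $x(H_y-1)$, where $H_y$ is the $y$-th harmonic number. Applying the classical bound $H_y \le \log y + \gamma + 1/(2y)$, valid for $y\ge 1$ (the next term in the Euler--Maclaurin expansion is $-1/(12 y^2) < 0$), yields $x\log y + (\gamma + 1/(2y) - 1)x$. For $n\ge 1$, I would enlarge the inner sum to $\sum_{k\ge 2} k^{-(2n+1)} = \zeta(2n+1) - 1$ and then use the clean uniform estimate $\zeta(2n+1) - 1 \le 4^{-n}$: splitting off the $k=2$ term gives $2^{-(2n+1)}$, and the remaining tail is bounded by $\int_2^\infty t^{-(2n+1)}\,dt = 1/(2n\cdot 2^{2n})$, which is at most $2^{-(2n+1)}$ for $n\ge 1$. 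Substituting gives
\[
\sum_{n\ge 1} \frac{x^{2n+1}}{n!\,(n+1)!\,4^n} = 2\sum_{n\ge 1} \frac{(x/2)^{2n+1}}{n!\,(n+1)!} = 2\bigl(I_{-1}(x) - x/2\bigr) = 2I_{-1}(x) - x,
\]
where the final equality recognizes the series as $I_{-1}(x) - x/2$. Adding the two pieces produces $x\log y + 2I_{-1}(x) - (2 - \gamma - 1/(2y))x$, matching the claimed bound exactly.

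The delicate step is the sharp geometric factor $4^{-n}$ in the bound on $\zeta(2n+1) - 1$: it is precisely the dilation that rescales a Bessel-type series in $x/k$ at $k=2$ back into the Bessel series in $x$. A coarser estimate such as $\zeta(2n+1)-1 \le \zeta(3)-1$ would fail to reassemble into a Bessel function at all, while a geometric constant other than $4^{-n}$ would lead to a prefactor different from $2$ in front of $I_{-1}(x)$ and to an incorrect constant in the linear $x$-term. Once this bound is in place, the rest is bookkeeping with $H_y$ and the series for $I_{-1}$.
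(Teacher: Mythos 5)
Your proof is correct: the series expansion of $I_{-1}$, the separation of the $n=0$ term to produce the harmonic sum, and the bound $\zeta(2n+1)-1\leq 4^{-n}$ (via splitting off $k=2$ and an integral comparison) all check out, and the pieces reassemble exactly into the stated bound. The paper itself only cites this lemma from Schlosser--Zhou without proof, but your argument is structurally identical to the paper's own proof of the companion Lemma \ref{lem:zplus1} (the $k\geq 3$ analogue), which uses precisely the same decomposition with $3^{-2n}$ in place of $4^{-n}$.
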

\begin{lem}
\label{lem:z2} Suppose that $s>0,$ then for $t\geq3,$ the function
\[
M\left(s,t\right):=\frac{t\log\left(t\right)+2I_{-1}(t)+st}{I_{-1}(2t)}
\]
is decreasing in $t.$
\end{lem}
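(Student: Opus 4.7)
The plan is to exploit linearity of $M$ in $s$ and write
\[
M(s,t) \;=\; M_{1}(t) \;+\; s\,M_{2}(t),\qquad M_{1}(t) := \frac{t\log t + 2\,I_{-1}(t)}{I_{-1}(2t)},\qquad M_{2}(t) := \frac{t}{I_{-1}(2t)}.
\]
Since $s$ ranges over $(0,\infty)$, $M(s,\,\cdot\,)$ is (strictly) decreasing on $[3,\infty)$ for every $s>0$ precisely when both $M_{1}$ and $M_{2}$ are (strictly) decreasing there. The series defining $I_{-1}$ coincides with the standard modified Bessel function $I_{1}$ of the first kind, so I will freely use the recursion $I_{-1}'(z) = I_{0}(z) - I_{-1}(z)/z$ together with the elementary fact $I_{0}(z) > I_{-1}(z) > 0$ for $z>0$.

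The case of $M_{2}$ is routine: direct differentiation combined with the recursion yields
\[
M_{2}'(t) \;=\; \frac{2\bigl(I_{-1}(2t) - t\,I_{0}(2t)\bigr)}{I_{-1}(2t)^{2}},
\]
which is strictly negative for all $t \geq 1$ since $I_{0}(2t) > I_{-1}(2t)$.

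For $M_{1}$, set $N(t) := t\log t + 2\,I_{-1}(t)$; the sign of $M_{1}'(t)$ equals that of $N'(t) I_{-1}(2t) - 2 N(t) I_{-1}'(2t)$. Plugging in $N'(t) = \log t + 1 + 2 I_{0}(t) - 2 I_{-1}(t)/t$ and $2 I_{-1}'(2t) = 2 I_{0}(2t) - I_{-1}(2t)/t$ and simplifying, the inequality $M_{1}'(t) \leq 0$ reduces to
\[
(2\log t + 1)\,I_{-1}(2t) + 2\,I_{0}(t)\,I_{-1}(2t) \;\leq\; 2t\log t\cdot I_{0}(2t) + 4\,I_{-1}(t)\,I_{0}(2t).
\]
I would establish this term-by-term by splitting it into the two sufficient pieces
\[
\text{(A)}\quad(2\log t + 1)\,I_{-1}(2t) \leq 2t\log t\cdot I_{0}(2t), \qquad \text{(B)}\quad I_{0}(t)\,I_{-1}(2t) \leq 2\,I_{-1}(t)\,I_{0}(2t).
\]
Since $I_{-1}(2t) < I_{0}(2t)$, inequality (A) follows from the elementary estimate $2\log t + 1 \leq 2t\log t$, which is easily checked for $t \geq 3$.

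The main obstacle is (B), which after dividing by $I_{-1}(t)I_{-1}(2t)$ reads
\[
\frac{I_{0}(t)}{I_{-1}(t)} \;\leq\; 2\cdot\frac{I_{0}(2t)}{I_{-1}(2t)}.
\]
My approach is to invoke the classical fact that $\rho(z) := I_{0}(z)/I_{1}(z)$ is strictly decreasing on $(0,\infty)$ (a consequence of the Tur\'an-type inequality $I_{1}(z)^{2} > I_{0}(z)I_{2}(z)$) and tends to $1$ as $z \to \infty$. In particular $2\rho(2t) > 2$, so it suffices to verify $\rho(t) \leq 2$ for $t \geq 3$. This in turn follows from the monotonicity of $\rho$ together with the explicit value $\rho(3) = I_{0}(3)/I_{1}(3) \approx 1.235 < 2$. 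Combining (A) and (B) gives $M_{1}'(t) \leq 0$ on $[3,\infty)$, and together with the bound on $M_{2}'$ this completes the proof of the lemma.
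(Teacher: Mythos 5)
Your overall strategy is sound, and it is worth noting that the paper does not actually prove Lemma \ref{lem:z2}: it only remarks that the statement is an equivalent form of \cite[Lemma 9]{SZ}, so your argument is a self-contained proof where the paper defers to a citation. Structurally your proof is close in spirit to the paper's proof of the companion Lemma \ref{lem:zplus2}, which likewise reduces everything to monotonicity of ratios of modified Bessel functions. Your reduction of $M_{1}'(t)\leq 0$ to the displayed inequality is correct (the terms $-2I_{-1}(t)I_{-1}(2t)/t$ cancel on both sides), the splitting into (A) and (B) is legitimate, and (A) follows for $t\geq 3$ from $2(t-1)\log t>1$ together with $0<I_{-1}(2t)<I_{0}(2t)$. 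Inequality (B) is also true; in fact a short computation shows it is exactly the statement that $I_{-1}(t)/I_{-1}(2t)$ is non-increasing, which is the same type of fact the paper extracts from \cite{SS} in the proof of Lemma \ref{lem:zplus2}.

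The one genuine flaw is the justification you give for the monotonicity of $\rho(z)=I_{0}(z)/I_{-1}(z)$. Using $I_{0}'(z)=I_{-1}(z)$ and $I_{-1}'(z)=\tfrac{1}{2}\left(I_{0}(z)+I_{2}(z)\right)$ one finds
\[
\rho'(z)=\frac{I_{-1}(z)^{2}-\tfrac{1}{2}I_{0}(z)^{2}-\tfrac{1}{2}I_{0}(z)I_{2}(z)}{I_{-1}(z)^{2}},
\]
so to conclude $\rho'<0$ you need the \emph{upper} bound $2I_{-1}(z)^{2}\leq I_{0}(z)^{2}+I_{0}(z)I_{2}(z)$; the Tur\'an-type inequality $I_{-1}(z)^{2}>I_{0}(z)I_{2}(z)$ that you invoke bounds $I_{-1}(z)^{2}$ from \emph{below} and therefore cannot deliver this. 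The monotonicity itself is a true classical result (the ratio $I_{1}(z)/I_{0}(z)$ increases from $0$ to $1$; see Amos or the Simpson--Spector circle of results \cite{SS} already used in this paper), so the argument is repaired simply by citing that result in place of the Tur\'an inequality; alternatively, (B) follows from $I_{0}(2t)>I_{-1}(2t)$ together with the standard lower bound $I_{-1}(t)/I_{0}(t)\geq t/\left(1+\sqrt{1+t^{2}}\right)>1/2$ for $t\geq 3$, which avoids the monotonicity of $\rho$ altogether.
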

This lemma gives an equivalent form of \cite[Lemma 9]{SZ}.

We now turn to bounding 
\[
\left|\frac{c_{\delta}^{(5)}(n)-\hat{c}_{\delta}^{(5)}(n)}{\hat{c}_{\delta}^{(5)}(n)}\right|.
\]

Let
\[
L_{\delta,n}=\frac{2\pi\sqrt{\delta}}{5\sqrt{5}}\sqrt{n+\frac{\delta}{5}}.
\]
Apply Lemma \ref{lem:z}. We get 
\begin{align*}
\left|c_{\delta}^{(5)}(n)-\hat{c}_{\delta}^{(5)}(n)\right| & \leq\left|E_{\delta,N}(n)\right|+8\delta^{1/2}\pi\left|\frac{1}{1-e^{\frac{2\pi i}{5}}}\right|\left(n+\frac{\delta}{5}\right)^{-1/2}\\
 & \;\times\left(L_{\delta,n}\log\left(\frac{N}{5}\right)+2I_{-1}\left(L_{\delta,n}\right)-\left(2-\gamma-\frac{5}{2N}\right)L_{\delta,n}\right).
\end{align*}
For $1\leq\delta\leq\frac{\sqrt{97}-5}{2}\approx2.424428900898\cdots,$
$n\geq176,$ we have
\[
\sqrt{4\pi\left(n+\frac{\delta}{5}\right)}\geq\sqrt{4\pi\left(176+\frac{1}{5}\right)}\geq47,
\]
so that

\[
N=\left\lceil \sqrt{4\pi\left(n+\frac{\delta}{5}\right)}\right\rceil =\left\lceil \frac{5\sqrt{5}L_{\delta,n}}{\sqrt{\pi\delta}}\right\rceil \leq\frac{5\sqrt{5}L_{\delta,n}}{\sqrt{\pi\delta}}+1\leq\frac{48}{47}\times\frac{5\sqrt{5}L_{\delta,n}}{\sqrt{\pi\delta}}.
\]
Then 
\begin{align*}
\left|c_{\delta}^{(5)}(n)-\hat{c}_{\delta}^{(5)}(n)\right| & \leq\left|E_{\delta,N}(n)\right|+8\delta^{1/2}\pi\left|\frac{1}{1-e^{\frac{2\pi\mathrm{i}}{5}}}\right|\left(n+\frac{\delta}{5}\right)^{-1/2}\\
 & \;\times\left(L_{\delta,n}\log\left(\frac{\sqrt{5}L_{\delta,n}}{\sqrt{\pi\delta}}\frac{48}{47}\right)+2I_{-1}\left(L_{\delta,n}\right)-\left(2-\gamma-\frac{5}{94}\right)L_{\delta,n}\right)
\end{align*}
and so
\begin{align*}
 & \left|\frac{c_{\delta}^{(5)}(n)}{\hat{c}_{\delta}^{(5)}(n)}-1\right|\left|\cos\left(\frac{4\pi}{5}\left(n+\frac{3\delta}{20}\right)\right)\right|\\
 & \leq10\sqrt{5}\left|\frac{1}{1-e^{\frac{2\pi\mathrm{i}}{5}}}\right|\frac{L_{\delta,n}\log\left(L_{\delta,n}\right)+2I_{-1}(L_{\delta,n})+\nu(\delta)L_{\delta,n}}{I_{-1}(2L_{\delta,n})}
\end{align*}
with 
\[
\begin{aligned}\nu(\delta) & =\frac{1}{2}\log\left(\frac{1}{\delta}\right)-1.116+\frac{0.914\times5.978^{\delta}\left(2.443+1.002^{\delta}\right)}{\delta}\\
 & \;+\frac{0.084\left(54.366\times10.372^{\delta}+5.437\left(1.702^{\delta}+0.486^{\delta}+0.485^{\delta}\right)\right)}{\delta}.
\end{aligned}
\]

For all $n\geq176,$ $\delta\geq1,$ we have $L_{\delta,n}\geq3,$
$\nu(\delta)>0.$ Apply Lemma \ref{lem:z2}. We obtain that
\[
10\sqrt{5}\left|\frac{1}{1-e^{\frac{2\pi\mathrm{i}}{5}}}\right|\frac{L_{\delta,n}\log\left(L_{\delta,n}\right)+2I_{-1}(L_{\delta,n})+\nu(\delta)L_{\delta,n}}{I_{-1}(2L_{\delta,n})}
\]
is decreasing in $n$ for $n\geq176,$ and $\delta\geq1.$ Employing
the software \emph{Mathematica} we can verify that 
\[
10\sqrt{5}\left|\frac{1}{1-e^{\frac{2\pi\mathrm{i}}{5}}}\right|M\left(\nu(\delta),L_{\delta,176}\right)<\min_{n=0,1,2,3,4}\left|\cos\left(\frac{4\pi}{5}\left(n+\frac{3\delta}{20}\right)\right)\right|
\]
holds for all $\delta\in\left[1,\frac{\sqrt{97}-5}{2}\right]\cup\left[\alpha,4\right].$
Then for $\delta\in\left[1,\frac{\sqrt{97}-5}{2}\right]\cup\left[\alpha,4\right]$
and $n\geq176,$ we have 
\[
\left|\frac{c_{\delta}^{(5)}(n)}{\hat{c}_{\delta}^{(5)}(n)}-1\right|\left|\cos\left(\frac{4\pi}{5}\left(n+\frac{3\delta}{20}\right)\right)\right|<\left|\cos\left(\frac{4\pi}{5}\left(n+\frac{3\delta}{20}\right)\right)\right|,
\]
and so the coefficient $c_{\delta}^{(5)}(n)$ has the same sign as
$\hat{c}_{\delta}^{(5)}(n),$ that is the coefficient $c_{\delta}^{(5)}(n)$
has the same sign as $\cos\left(\frac{4\pi}{5}\left(n+\frac{3\delta}{20}\right)\right).$
Hence, when $n\geq176$ and $\delta\in\left[1,\frac{\sqrt{97}-5}{2}\right],$
the $q$-series coefficients of $Q_{5}^{\delta}(q)$ exhibit the sign
pattern $+-+--,$ and when $n\geq176$ and $\delta\in\left[\alpha,4\right],$
the $q$-series coefficients of $Q_{5}^{\delta}(q)$ exhibit the sign
pattern $+-+-+.$

The rest is to utilize \emph{Mathematica} to confirm that for $\delta\in\left[1,\frac{\sqrt{97}-5}{2}\right],$
the coefficients $\{c_{\delta}^{(5)}(n)\}_{0\leq n<176}$ follow the
sign pattern $+-+--,$ and for $\delta\in\left[\alpha,4\right],$
the coefficients $\{c_{\delta}^{(5)}(n)\}_{0\leq n<176}$ also follow
the sign pattern $+-+-+.$ Computing the values of the coefficients
$\{c_{\delta}^{(5)}(n)\}_{0\leq n<176},$ we find ranges in which
$\{c_{\delta}^{(5)}(n)\}_{0\leq n<176}$ are located for $\delta\in\left[1,\frac{\sqrt{97}-5}{2}\right]$
or $\delta\in\left[\alpha,4\right].$ The ranges are shown in Table
1 and Table 2 of the appendix.

When $\delta=-1$ or $\delta\in[-3,-2],$ we let $\delta^{\prime}=-\delta$
and
\[
Q_{5}^{\delta}(q)=:\sum_{n=1}^{\infty}\bar{c}_{\delta^{\prime}}^{(5)}(n)q^{n}.
\]
Then ${\bf m}=\{1,2\},$ ${\bf n}=\{5,5\},$ and ${\bf u}=\{-1,1\}.$
Hence $L=5,$ and $\Omega=-24/5.$ It can be easily computed that
\[
\mathcal{L}_{>0}=\{(1,5),(4,5)\}.
\]
It is easy to find that 
\[
\max\{\sqrt{\Delta(\varkappa,\kappa)}/k|(\varkappa,\kappa)\in\mathcal{L}_{>0},k\equiv\kappa\;\text{(mod\;}L)\}=\frac{2\sqrt{6}}{5\sqrt{5}}
\]
 and 
\[
\sqrt{\Delta(\varkappa,\kappa)}/k
\]
attains its maximum when $(\varkappa,\kappa,k)=(1,5,5),\;(4,5,5).$
When $(\varkappa,\kappa)=(1,5),(4,5),$ 
\[
\frac{24}{\Delta(\varkappa,\kappa)}\varUpsilon\left(\lambda_{m_{i},n_{i}}^{*}(h,k)\right)\frac{d_{i}^{2}}{n_{i}}\geq\delta^{\prime}
\]
for $0<\delta^{\prime}\leq5.$ Let 
\[
\tilde{c}_{\delta^{\prime}}^{(5)}(n):=\frac{4\pi\sqrt{\delta^{\prime}}}{5\sqrt{5}}\cos\left(\frac{2\pi}{5}\left(n-\frac{2\delta^{\prime}}{5}\right)\right)\left(n-\frac{\delta^{\prime}}{5}\right)^{-1/2}I_{-1}\left(\frac{4\pi\sqrt{\delta^{\prime}}}{5\sqrt{5}}\sqrt{n-\frac{\delta^{\prime}}{5}}\right)
\]
and 
\[
N^{\prime}=\left\lceil \sqrt{4\pi\left(n-\frac{\delta^{\prime}}{5}\right)}\right\rceil .
\]
Then we have 
\begin{align*}
\bar{c}_{\delta^{\prime}}^{(5)}(n)-\tilde{c}_{\delta^{\prime}}^{(5)}(n) & =\sum_{\substack{5<k\leq N\\
k\equiv5\,\left(\text{mod}\,5\right)
}
}\sum_{\substack{0\leq h\leq k\\
h\equiv1\,\left(\text{mod}\,5\right)
}
}\frac{2\pi\sqrt{\delta^{\prime}}}{k}e^{-2\pi\mathrm{i}nh/k}\mathrm{i}^{\delta^{\prime}\sum_{j=1}^{I}u_{j}}(-1)^{\delta^{\prime}\sum_{j=1}^{I}u_{j}\lambda_{m_{j},n_{j}}(h,k)}\\
 & \;\times\omega_{h.k}^{\delta^{\prime}}\varTheta_{h,k}^{\delta^{\prime}}\Pi_{h,k}\left(n-\frac{\delta^{\prime}}{5}\right)^{-1/2}I_{-1}\left(\frac{4\pi\sqrt{\delta^{\prime}}}{\sqrt{5}k}\sqrt{n-\frac{\delta^{\prime}}{5}}\right)\\
 & +\sum_{\substack{5<k\leq N\\
k\equiv5\,\left(\text{mod}\,5\right)
}
}\sum_{\substack{0\leq h\leq k\\
h\equiv4\,\left(\text{mod}\,5\right)
}
}\frac{2\pi\sqrt{\delta^{\prime}}}{k}e^{-2\pi\mathrm{i}nh/k}\mathrm{i}^{\delta^{\prime}\sum_{j=1}^{I}u_{j}}(-1)^{\delta^{\prime}\sum_{j=1}^{I}u_{j}\lambda_{m_{j},n_{j}}(h,k)}\\
 & \;\times\omega_{h.k}^{\delta^{\prime}}\varTheta_{h,k}^{\delta^{\prime}}\Pi_{h,k}\left(n-\frac{\delta^{\prime}}{5}\right)^{-1/2}I_{-1}\left(\frac{4\pi\sqrt{\delta^{\prime}}}{\sqrt{5}k}\sqrt{n-\frac{\delta^{\prime}}{5}}\right)+E_{\delta^{\prime},N}^{\prime}(n)
\end{align*}
Using \emph{Mathematica}, we find that
\begin{align*}
\left|E_{\delta^{\prime},N^{\prime}}(n)\right| & \leq54.366\times10.372^{\delta^{\prime}}+5.437\left(1.702^{\delta^{\prime}}+0.486^{\delta^{\prime}}+0.485^{\delta^{\prime}}\right)\\
 & \;+10.874\times5.978^{\delta^{\prime}}\left(2.443+1.002^{\delta^{\prime}}\right)
\end{align*}
and so
\begin{align*}
 & \left|\bar{c}_{\delta^{\prime}}^{(5)}(n)-\tilde{c}_{\delta^{\prime}}^{(5)}(n)\right|\\
 & \leq\left|E_{\delta^{\prime},N^{\prime}}^{\prime}(n)\right|+\sum_{\substack{5<k\leq N\\
k\equiv5\,\left(\text{mod}\,5\right)
}
}\sum_{\substack{0\leq h\leq k\\
h\equiv1\,\left(\text{mod}\,5\right)
}
}\frac{2\pi\sqrt{\delta^{\prime}}}{k}\left|\Pi_{h,k}\right|\left(n-\frac{\delta^{\prime}}{5}\right)^{-1/2}I_{-1}\left(\frac{4\pi\sqrt{\delta^{\prime}}}{\sqrt{5}k}\sqrt{n-\frac{\delta^{\prime}}{5}}\right)\\
 & +\sum_{\substack{5<k\leq N\\
k\equiv5\,\left(\text{mod}\,5\right)
}
}\sum_{\substack{0\leq h\leq k\\
h\equiv4\,\left(\text{mod}\,5\right)
}
}\frac{2\pi\sqrt{\delta^{\prime}}}{k}\left|\Pi_{h,k}\right|\left(n-\frac{\delta^{\prime}}{5}\right)^{-1/2}I_{-1}\left(\frac{4\pi\sqrt{\delta^{\prime}}}{\sqrt{5}k}\sqrt{n-\frac{\delta^{\prime}}{5}}\right)\\
 & \leq\sum_{\substack{5<k\leq N\\
k\equiv5\,\left(\text{mod}\,5\right)
}
}8\delta^{\prime2}\pi\left|\frac{1}{1-e^{\frac{2\pi\mathrm{i}}{5}}}\right|\left(n-\frac{\delta^{\prime}}{5}\right)^{-1/2}I_{-1}\left(\frac{4\pi\sqrt{\delta^{\prime}}}{\sqrt{5}k}\sqrt{n-\frac{\delta^{\prime}}{5}}\right)\\
 & =8\delta^{\prime2}\pi\left|\frac{1}{1-e^{\frac{2\pi\mathrm{i}}{5}}}\right|\left(n-\frac{\delta^{\prime}}{5}\right)^{-1/2}\sum_{1<k^{\prime}\leq N/5}I_{-1}\left(\frac{4\pi\sqrt{\delta^{\prime}}}{5\sqrt{5}k^{\prime}}\sqrt{n-\frac{\delta^{\prime}}{5}}\right).
\end{align*}

Similarly, take
\[
L_{\delta^{\prime},n}:=\frac{2\pi\sqrt{\delta^{\prime}}}{5\sqrt{5}}\sqrt{n-\frac{\delta^{\prime}}{5}}.
\]
When $\delta=-1$ or $\delta\in[-3,-2],$ i.e. $\delta^{\prime}=1$
or $\delta^{\prime}\in[2,3],$ using Lemma \ref{lem:z}, we get that
for $n\geq143,$ 
\begin{align*}
\left|\bar{c}_{\delta^{\prime}}^{(5)}(n)-\tilde{c}_{\delta^{\prime}}^{(5)}(n)\right| & \leq\left|E_{\delta^{\prime},N^{\prime}}^{\prime}(n)\right|+8\sqrt{\delta^{\prime}}\pi\left|\frac{1}{1-e^{\frac{2\pi\mathrm{i}}{5}}}\right|\left(n-\frac{\delta^{\prime}}{5}\right)^{-1/2}\\
 & \;\times\left(L_{\delta^{\prime},n}\log\left(\frac{\sqrt{5}L_{\delta^{\prime},n}}{\sqrt{\pi\delta^{\prime}}}\frac{43}{42}\right)+2I_{-1}\left(L_{\delta^{\prime},n}\right)-\left(2-\gamma-\frac{5}{84}\right)L_{\delta^{\prime},n}\right).
\end{align*}
and so
\begin{align*}
 & \left|\frac{\bar{c}_{\delta^{\prime}}^{(5)}(n)}{\hat{c}_{\delta^{\prime}}(n)}-1\right|\left|\cos\left(\frac{2\pi}{5}\left(n-\frac{2\delta^{\prime}}{5}\right)\right)\right|\\
 & \leq10\sqrt{5}\left|\frac{1}{1-e^{\frac{2\pi\mathrm{i}}{5}}}\right|\frac{L_{\delta^{\prime},n}\log\left(L_{\delta^{\prime},n}\right)+2I_{-1}(L_{\delta^{\prime},n})+\nu(\delta^{\prime})L_{\delta^{\prime},n}}{I_{-1}(2L_{\delta^{\prime},n})}
\end{align*}
with 
\[
\begin{aligned}\nu^{\prime}(\delta^{\prime}) & =\frac{1}{2}\log\left(\frac{1}{\delta^{\prime}}\right)-1.107+\frac{0.914\times5.978^{\delta^{\prime}}\left(2.443+1.002^{\delta^{\prime}}\right)}{\delta^{\prime}}\\
 & \;+\frac{0.084\left(54.366\times10.372^{\delta^{\prime}}+5.437\left(1.702^{\delta^{\prime}}+0.486^{\delta^{\prime}}+0.485^{\delta^{\prime}}\right)\right)}{\delta^{\prime}}.
\end{aligned}
\]
For $n\geq143,$ and $\delta^{\prime}\in\{1\}\cup[2,3],$ we have
$L_{\delta^{\prime},n}>3,$ $\nu^{\prime}(\delta^{\prime})>0.$ Apply
Lemma \ref{lem:z2}. This establishes that
\[
10\sqrt{5}\left|\frac{1}{1-e^{\frac{2\pi\mathrm{i}}{5}}}\right|M\left(\nu^{\prime}(\delta^{\prime}),L_{\delta^{\prime},n}\right)
\]
is decreasing in $n$ for $n\geq143.$ Using the software \emph{Mathematica}
we can verify that 
\[
10\sqrt{5}\left|\frac{1}{1-e^{\frac{2\pi\mathrm{i}}{5}}}\right|M\left(\nu^{\prime}(\delta^{\prime}),L_{\delta^{\prime},143}\right)<\min_{n=0,1,2,3,4}\left|\cos\left(\frac{2\pi}{5}\left(n-\frac{2\delta^{\prime}}{5}\right)\right)\right|
\]
holds for all $\delta^{\prime}\in\{1\}\cup[2,3],$ so for $n\geq143,$
$\delta\in\{-1\}\cup[-3,-2],$ we have $c_{\delta}^{(5)}(n)=\bar{c}_{\delta^{\prime}}^{(5)}(n)$
has the same sign as $\cos\left(\frac{2\pi}{5}\left(n-\frac{2\delta^{\prime}}{5}\right)\right).$
This indicates that the coefficients $\{c_{-1}^{(5)}(n)\}_{n\geq143}$
exhibit the sign pattern $++---$ and the coefficients $\{c_{\delta}^{(5)}(n)\}_{n\geq143}$
exhibit the sign pattern $+++--$ for $\delta\in\left[-3,-2\right].$
Using \emph{Mathematica}, we can easily confirm that the coefficients
$\{c_{-1}^{(5)}(n)\}_{0\leq n<143}$ follow the sign pattern $++---.$
Similarly, we can obtain ranges in which $\{c_{\delta}^{(5)}(n)\}_{0\leq n<143}$
are located for $\delta\in\left[-3,-2\right].$ The ranges are shown
in Table 3 of the appendix. From this table, we find that the coefficients
$\{c_{\delta}^{(5)}(n)\}_{0\leq n<143}$ exhibit the sign pattern
$+++--$ for $\delta\in\left[-3,-2\right].$ This completes the proof
of Theorem \ref{thm:6}. \qed

\subsection{Proof of Theorem \ref{thm:7}}

It is easy to see that the $q$-series coefficients of $Q_{6}(q)^{\delta}$
exhibit the sign pattern $(+-)^{3}$ is equivalent to that $Q_{6}(-q)^{\delta}$
has non-negative coefficients. For $\delta\geq3,$ we can write $\delta$
as

\[
\delta=\delta_{1}+\delta_{2},
\]
where $\delta_{1}$ is a non-negative integer and $\delta_{2}$ is
a real number such that $3\leq\delta_{2}<4.$ Since 
\[
Q_{6}(-q)^{\delta_{1}}=\left[(-q,-q^{5};q^{6})_{\infty}\right]^{\delta_{1}}
\]
has non-negative coefficients, it suffices to show that $Q_{6}(-q)^{\delta_{2}}$
has non-negative coefficients for $3\leq\delta_{2}<4.$ Namely, we
only need to show that the $q$-series coefficients of $Q_{6}(q)^{\delta_{2}}$
exhibit the sign pattern $(+-)^{3}$ for $3\leq\delta_{2}<4.$

For $Q_{6}(q)^{\delta_{2}},$ ${\bf m}=\{1\},$ ${\bf n}=\{6\},$
and ${\bf u}=\{1\}.$ Hence $L=6,$ and $\Omega=2.$ We compute that
\[
\mathcal{L}_{>0}=\{(1,2),(1,3),(1,4),(2,3),(2,6),(3,4),(3,6),(4,6)\}.
\]
It is easily deduced that when $(\varkappa,\kappa,k)=(3,6,6),$ $\sqrt{\Delta(\varkappa,\kappa)}/k$
attains $\max\{\sqrt{\Delta(\varkappa,\kappa)}/k|(\varkappa,\kappa)\in\mathcal{L}_{>0},k\equiv\kappa\;\text{(mod\;}L)\}.$
When $(\varkappa,\kappa)\in\mathcal{L}_{>0},$ 
\[
\frac{24}{\Delta(\varkappa,\kappa)}\varUpsilon\left(\lambda_{m_{1},n_{1}}^{*}(h,k)\right)\frac{d_{1}^{2}}{n_{1}}=12>\delta_{2}
\]
when $3\leq\delta_{2}<4.$ Let 
\[
Q_{6}^{\delta_{2}}(q)=:\sum_{n=1}^{\infty}c_{\delta_{2}}^{(6)}(n)q^{n}
\]
For $(\varkappa,\kappa,k)=(3,6,6),$ we compute that the $I$-Bessel
term is 
\[
\frac{\sqrt{\delta_{2}}\pi}{6}(-1)^{n}\left(n+\frac{\delta_{2}}{12}\right)^{-\frac{1}{2}}I_{-1}\left(\frac{\sqrt{\delta_{2}}\pi}{3}\sqrt{n+\frac{\delta_{2}}{12}}\right).
\]
Proceeding as in the proof of Theorem \ref{thm:6}, we get that when
$3\leq\delta_{2}<4,$ and $n\geq57,$ the coefficient $c_{\delta_{2}}^{(6)}(n)$
has the same sign as $(-1)^{n},$ i.e. the coefficients $\{c_{\delta_{2}}^{(6)}(n)\}_{n\geq57}$
exhibit the sign pattern $(+-)^{3}$ for $3\leq\delta_{2}<4.$ We
used \emph{Mathematica} to compute ranges in which the coefficients
$\{c_{\delta_{2}}^{(6)}(n)\}_{0\leq n<57}$ are located for $3\leq\delta_{2}\leq4.$
The ranges are shown in Table 4 of the appendix. From the table, we
discover that the coefficients $\{c_{\delta_{2}}^{(6)}(n)\}_{0\leq n<57}$
are alternating when $3\leq\delta_{2}<4.$ This concludes the proof
of Theorem \ref{thm:7}. \qed

\subsection{Proof of Theorem \ref{thm:8}}

Let $Q_{8}(q)$ be defined as in Conjecture \ref{conj:3}. Then ${\bf m}=\{1,3\},$
${\bf n}=\{8,8\},$ and ${\bf u}=\{1,-1\}.$ Hence $L=8,\Omega=12$
and $\mathcal{L}_{>0}=\{(3,8),(5,8)\}.$ After computations, we find
that when $(\varkappa,\kappa,k)=(3,8,8),\;(5,8,8),$ $\sqrt{\Delta(\varkappa,\kappa)}/k$
attains $\max\{\sqrt{\Delta(\varkappa,\kappa)}/k|(\varkappa,\kappa)\in\mathcal{L}_{>0},k\equiv\kappa\;\text{(mod\;}L)\}=\sqrt{3}/4.$

When $(\varkappa,\kappa)=(3,8),(5,8),i=1,2,$ 
\[
\frac{24}{\Delta(\varkappa,\kappa)}\varUpsilon\left(\lambda_{m_{i},n_{i}}^{*}(h,k)\right)\frac{d_{i}^{2}}{n_{i}}\geq\delta
\]
when $0<\delta\leq2.$ When $\beta<\delta\leq4,$ the inequality \eqref{eq:thm1}
is not satisfied and we can not use Theorem \ref{thm:1} to handle
this case.

For $\delta=2,$ we let 
\[
Q_{8}(q)^{\delta}=:\sum_{n=1}^{\infty}c_{\delta}^{(8)}(n)q^{n}
\]
and
\[
\hat{c}_{\delta}^{(8)}(n):=\frac{\pi\sqrt{2\delta}}{4}\cos\left(\frac{3\pi}{4}n\right)\left(n+\frac{\delta}{2}\right)^{-1/2}I_{-1}\left(\frac{\pi\sqrt{2\delta}}{4}\sqrt{n+\frac{\delta}{2}}\right).
\]
It is easily seen that $\cos\left(\frac{3\pi}{4}n\right)>0$ if $n\equiv0,3,5\;(\bmod8),$
and $\cos\left(\frac{3\pi}{4}n\right)<0$ if $n\equiv1,4,7\;\left(\bmod8\right).$
Proceeding as in the proof of Theorem \ref{thm:6}, we can get that
when $n\geq138,$ and $n\equiv0,1,3,4,5,7\;\left(\bmod8\right),$
the coefficients $c_{2}^{(8)}(n)$ have the same sign as $\cos\left(\frac{3\pi}{4}n\right).$
For $n\equiv2,6\;\left(\bmod8\right),$ we have $\cos\left(\frac{3\pi}{4}n\right)=0.$
Let 
\[
\tilde{c}_{2}^{(8)}(n):=\frac{\pi\sqrt{2\delta}}{4}\left(\cos\left(\frac{3\pi}{8}n+\frac{\delta\pi}{2}\right)+\cos\left(\frac{5\pi}{8}n+\frac{\delta\pi}{2}\right)\right)\left(n+\frac{\delta}{2}\right)^{-1/2}I_{-1}\left(\frac{\pi\sqrt{2\delta}}{8}\sqrt{n+\frac{\delta}{2}}\right)
\]
and 
\[
N=\left\lceil 2\sqrt{\pi\left(n+\frac{\delta}{2}\right)}\right\rceil .
\]

Using \emph{Mathematica}, we get 
\[
\left|E_{2,N}(n)\right|\leq932477.
\]
For $n\equiv2,6\left(\bmod8\right),$ we have 
\begin{align*}
 & \left|c_{2}^{(8)}(n)-\hat{c}_{2}^{(8)}(n)-\tilde{c}_{2}^{(8)}(n)\right|\\
 & \leq\left|E_{2,N}(n)\right|+\sum_{\substack{16<k\leq N\\
k\equiv8\,\left(\text{mod}\,8\right)
}
}\sum_{\substack{0\leq h\leq k\\
h\equiv3\,\left(\text{mod}\,8\right)
}
}\frac{2\pi\sqrt{2}}{k}\left|\Pi_{h,k}\right|\left(n+1\right)^{-1/2}I_{-1}\left(\frac{4\pi}{k}\sqrt{n+1}\right)\\
 & +\sum_{\substack{16<k\leq N\\
k\equiv8\,\left(\text{mod}\,8\right)
}
}\sum_{\substack{0\leq h\leq k\\
h\equiv5\,\left(\text{mod}\,8\right)
}
}\frac{2\pi\sqrt{2}}{k}\left|\Pi_{h,k}\right|\left(n+1\right)^{-1/2}I_{-1}\left(\frac{4\pi}{k}\sqrt{n+1}\right)\\
 & \leq\sum_{\substack{24<k\leq N\\
k\equiv12\,\left(\text{mod}\,12\right)
}
}8\sqrt{2}\pi\left|\frac{1}{1-e^{\frac{\pi\mathrm{i}}{4}}}\right|\left(n+1\right)^{-1/2}I_{-1}\left(\frac{4\pi}{k}\sqrt{n+1}\right)\\
 & =8\sqrt{2}\pi\left|\frac{1}{1-e^{\frac{\pi\mathrm{i}}{4}}}\right|\left(n+1\right)^{-1/2}\sum_{3\leq k^{\prime}\leq N/8}I_{-1}\left(\frac{\pi}{2k^{\prime}}\sqrt{n+1}\right).
\end{align*}

In order to bound 
\[
\left|\frac{c_{2}^{(8)}(n)-\hat{c}_{2}^{(8)}(n)-\tilde{c}_{2}^{(8)}(n)}{\tilde{c}_{2}^{(8)}(n)}\right|,
\]
we need the following two lemmas.
\begin{lem}
\label{lem:zplus1} For any real $x>0$ and integer $y>3$ we have
\[
\sum_{3\leq k\leq y}I_{-1}\left(\frac{2x}{k}\right)\leq x\log y+3I_{-1}\left(\frac{2x}{3}\right)-\left(\frac{5}{2}-\gamma-\frac{1}{2y}\right)x.
\]
Here $\gamma=0.577216\cdots$ is the Euler-Mascheroni constant.
\end{lem}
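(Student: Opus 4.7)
The plan is to mirror the proof structure of Lemma \ref{lem:z} but with the summation range shifted to $k \ge 3$. First I would isolate the $k = 3$ term by writing
\[
\sum_{3 \le k \le y} I_{-1}(2x/k) = I_{-1}(2x/3) + \sum_{4 \le k \le y} I_{-1}(2x/k),
\]
and then handle the remaining sum by peeling off the leading term of the Taylor expansion $I_{-1}(2x/k) = x/k + \sum_{n \ge 1} (x/k)^{2n+1}/(n!(n+1)!)$.

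The linear contribution $x \sum_{k=4}^{y} 1/k = x\bigl(\sum_{k=1}^{y} 1/k - 11/6\bigr)$ is controlled by the standard harmonic estimate $\sum_{k=1}^{y} 1/k \le \log y + \gamma + 1/(2y)$, giving at most $x \log y - (11/6 - \gamma - 1/(2y)) x$. For the higher-order tail, swapping the summations over $k$ and $n$ and applying the integral comparison $\sum_{k=4}^{y} k^{-(2n+1)} \le \int_3^\infty t^{-(2n+1)} \, dt = 3^{-2n}/(2n)$ together with $1/(2n) \le 1/2$ for $n \ge 1$ yields
\[
\sum_{4 \le k \le y} \sum_{n \ge 1} \frac{(x/k)^{2n+1}}{n!(n+1)!} \le \frac{1}{2} \sum_{n \ge 1} \frac{x^{2n+1} \cdot 3^{-2n}}{n!(n+1)!} = \frac{3}{2} I_{-1}(2x/3) - \frac{x}{2},
\]
where the final equality uses $\sum_{n \ge 1} (x/3)^{2n+1}/(n!(n+1)!) = I_{-1}(2x/3) - x/3$.

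Combining these and reinstating the $I_{-1}(2x/3)$ from the $k = 3$ term produces the sharper inequality
\[
\sum_{3 \le k \le y} I_{-1}(2x/k) \le x \log y + \frac{5}{2} I_{-1}(2x/3) - \left(\frac{7}{3} - \gamma - \frac{1}{2y}\right) x.
\]
To reach the stated form, I would invoke the elementary bound $I_{-1}(2x/3) \ge x/3$, which is merely the first term of the defining series; this permits trading $\frac{1}{2} I_{-1}(2x/3)$ for $x/6$ on the right-hand side and so converts the coefficient pair $(5/2, 7/3)$ into the round pair $(3, 5/2)$. The principal obstacle is bookkeeping: each of the three estimates (the harmonic bound, the integral tail, and $1/(2n) \le 1/2$) introduces a small slack, and the cleaner target form is valid precisely because these slacks together are absorbed by $I_{-1}(2x/3) - x/3 \ge 0$. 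Starting the integral comparison at $t = 3$ rather than $t = 4$ is essential, since it produces the factor $3^{-2n}/(2n)$ that dovetails with the series for $I_{-1}(2x/3)$.
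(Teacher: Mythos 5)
Your proof is correct and follows essentially the same route as the paper's: termwise Taylor expansion of $I_{-1}$, the harmonic-sum bound $\sum_{k\leq y}1/k\leq\log y+\gamma+1/(2y)$ for the $n=0$ term, and an integral comparison from $t=3$ for the $n\geq1$ tail, recombined into the series for $I_{-1}(2x/3)$. The only difference is bookkeeping — you peel off $k=3$ and absorb the final slack via $I_{-1}(2x/3)\geq x/3$, whereas the paper keeps $k=3$ inside the sum and uses $3^{-(2n+1)}+3^{-2n}/(2n)<3^{-2n}$ — and both yield the stated bound.
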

\begin{proof}
It is easily seen that for $n\geq1,$
\[
\sum_{k=3}^{y}\frac{1}{k^{2n+1}}<\sum_{k=3}^{\infty}\frac{1}{k^{2n+1}}<\frac{1}{3^{2n+1}}+\int_{3}^{\infty}\frac{1}{x^{2n+1}}dx=\frac{1}{3^{2n+1}}+\frac{1}{2n3^{2n}}<\frac{1}{3^{2n}}.
\]
Then 
\begin{align*}
\sum_{3\leq k\leq y}I_{-1}\left(\frac{2x}{k}\right) & =\sum_{n\geq0}\frac{x^{2n+1}}{n!(n+1)!}\sum_{3\leq k\leq y}\frac{1}{k^{2n+1}}\\
 & =x\sum_{3\leq k\leq y}\frac{1}{k}+\sum_{n\geq1}\frac{x^{2n+1}}{n!(n+1)!}\sum_{3\leq k\leq y}\frac{1}{k^{2n+1}}\\
 & \leq x\left(\log y+\gamma-\frac{3}{2}+\frac{1}{2y}\right)+3\left(\sum_{n\geq0}\frac{(x/3)^{2n+1}}{n!(n+1)!}-\frac{x}{3}\right)\\
 & \leq x\log y+3I_{-1}\left(\frac{2x}{3}\right)-\left(\frac{5}{2}-\gamma-\frac{1}{2y}\right)x,
\end{align*}
where, in the third step we have used the well-known result:
\[
\sum_{k=1}^{y}\frac{1}{k}\leq\log y+\gamma+\frac{1}{2y}.
\]
\end{proof}
\begin{lem}
\label{lem:zplus2} Suppose that $s>0,$ then the function
\[
\widehat{M}\left(s,t\right):=\frac{t\log\left(t\right)+3I_{-1}\left(\frac{2t}{3}\right)+st}{I_{-1}(t)}
\]
 is decreasing in $t$ for $t\geq5.$
\end{lem}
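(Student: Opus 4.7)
The plan is to reduce the bivariate monotonicity to two one-variable inequalities by exploiting the affine dependence on $s$. Writing $\widehat{M}(s,t) = A(t) + s B(t)$ with
\[
A(t) := \frac{t \log t + 3 I_{-1}(2t/3)}{I_{-1}(t)}, \qquad B(t) := \frac{t}{I_{-1}(t)},
\]
and noting that $s > 0$ can be chosen arbitrarily small or large, the monotonicity of $\widehat{M}(s,t)$ in $t$ for every such $s$ is equivalent to $A'(t) \leq 0$ and $B'(t) \leq 0$ on $[5, \infty)$.

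The bound $B'(t) \leq 0$ reduces to $t I_{-1}'(t) \geq I_{-1}(t)$, which I will verify termwise from the defining series:
\[
t I_{-1}'(t) - I_{-1}(t) \;=\; \sum_{n \geq 1} \frac{2n}{n!(n+1)!}\Bigl(\frac{t}{2}\Bigr)^{2n+1} \;\geq\; 0 \qquad (t \geq 0).
\]
For $A'(t) \leq 0$, I apply the recurrence $I_{-1}'(z) = I_0(z) - I_{-1}(z)/z$, where $I_0(z) := \sum_{n \geq 0}(z/2)^{2n}/(n!)^2$. After substitution and simplification, the numerator of $A'(t) \cdot I_{-1}(t)^2$ becomes
\[
\log t \cdot \bigl[I_{-1}(t) - t I_{-1}'(t)\bigr] \;+\; I_{-1}(t)\bigl[1 + 2 I_0(2t/3)\bigr] \;-\; 3 I_0(t) I_{-1}(2t/3).
\]
The first summand is non-positive for $t \geq 1$ by the bound just established for $B$, so the crux reduces to proving
\[
I_{-1}(t)\bigl[1 + 2 I_0(2t/3)\bigr] \;\leq\; 3 I_0(t) I_{-1}(2t/3) \qquad (t \geq 5).
\]

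The main obstacle is verifying this last inequality uniformly on $[5,\infty)$. Asymptotically, the standard expansion $I_\nu(z) \sim e^z/\sqrt{2\pi z}$ makes the ratio of the two sides tend to $2/3$, so the inequality is amply satisfied for large $t$. To handle the transition near $t = 5$, I plan to rewrite the inequality as
\[
\frac{I_{-1}(t)}{I_0(t)} \cdot \frac{1 + 2 I_0(2t/3)}{3 I_{-1}(2t/3)} \leq 1,
\]
and combine the monotonicity of the Bessel-function ratios $I_{-1}/I_0$ (increasing to $1$) and $I_0/I_{-1}$ (decreasing to $1$), together with the decay of $1/I_{-1}(2t/3)$, with a direct numerical verification at $t = 5$. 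Since one factor on the left is increasing and the other decreasing, the product is not a priori monotone; the delicate step will be to bound the derivative of the left-hand side by using the Bessel recurrences together with a Tur\'an-type inequality for $I_0 I_{-1}$. This mirrors the template of the analogous Lemma \ref{lem:z2}, which is the corresponding statement for $M(s,t)$ and whose proof in \cite{SZ} proceeds by the same split-and-verify strategy.
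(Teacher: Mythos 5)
Your setup is sound: writing $\widehat{M}(s,t)=A(t)+sB(t)$ and handling $B(t)=t/I_{-1}(t)$ termwise from the series is exactly right, and your algebra for the numerator of $A'(t)I_{-1}(t)^{2}$ checks out. The problem is that the proof stops at its hardest point. You reduce everything to the inequality $I_{-1}(t)\bigl[1+2I_{0}(2t/3)\bigr]\leq 3I_{0}(t)I_{-1}(2t/3)$ for $t\geq 5$ and then only describe a plan for it (monotonicity of $I_{-1}/I_{0}$, a Tur\'an-type inequality, a numerical check at $t=5$), explicitly flagging the derivative bound as ``the delicate step.'' That step is the entire content of the lemma beyond routine calculus; as written the argument is incomplete. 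Note also that your own observation --- one factor increasing, one decreasing, product not a priori monotone --- is precisely why a check at $t=5$ plus asymptotics does not close the gap.

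The paper sidesteps this difficulty by splitting $A(t)$ further into $t\log t/I_{-1}(t)$ and $3I_{-1}(2t/3)/I_{-1}(t)$ and proving each is decreasing separately. The $\log$ term is handled by a crude series estimate (the relevant numerator is bounded above by $\tfrac14\bigl(\tfrac{2}{t}-\tfrac{t}{4}(2\log t-1)\bigr)$, negative for $t\geq5$), and the ratio term reduces, via $I_{-1}'(x)=I_{0}(x)-x^{-1}I_{-1}(x)$, to showing $2I_{0}(2x)I_{-1}(3x)<3I_{0}(3x)I_{-1}(2x)$, which follows immediately from the Simpson--Spector theorem that $xI_{0}(x)/I_{1}(x)$ is strictly decreasing (using $I_{-1}=I_{1}$). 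Observe that the paper's cross-term inequality is your inequality \emph{without} the extra $+I_{-1}(t)$ on the left: by keeping the $t\log t$ term separate, the paper lets the series estimate absorb that extra contribution, whereas your lumping of the two terms forces you to prove a strictly stronger product inequality by hand. If you want to salvage your route, the cleanest fix is to un-merge the two terms and invoke Simpson--Spector as the paper does; otherwise you must actually carry out the derivative bound you defer.
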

\begin{proof}
By the definition of $I_{-1}(x),$ we know that $st/I_{-1}(t)$ is
decreasing in $t$ for $t>0.$ For $\frac{t\log\left(t\right)}{I_{-1}(t)},$
we have 
\[
\left(\frac{t\log\left(t\right)}{I_{-1}(t)}\right)^{\prime}=\frac{t^{-1}\left(t^{-1}I_{-1}(t)\right)-\left(t^{-1}I_{-1}(t)\right)^{\prime}\log t}{\text{\ensuremath{\left(t^{-1}I_{-1}(t)\right)}}^{2}},
\]
and 
\begin{align*}
 & t^{-1}\left(t^{-1}I_{-1}(t)\right)-\left(t^{-1}I_{-1}(t)\right)^{\prime}\log t\\
 & =\frac{1}{4}\sum_{l\geq0}\frac{\left(\frac{t}{2}\right)^{2l-1}}{l!(l+1)!}-\frac{\log t}{2}\sum_{l\geq0}\frac{l\left(\frac{t}{2}\right)^{2l-1}}{l!(l+1)!}\\
 & =\frac{1}{4}\left(\frac{2}{t}-\sum_{l\geq0}\frac{(2l\log t-1)\left(\frac{t}{2}\right)^{2l-1}}{l!(l+1)!}\right)\\
 & \leq\frac{1}{4}\left(\frac{2}{t}-\frac{t}{4}(2\log t-1)\right).
\end{align*}
For all $t\geq\sqrt{e},$ the function $\frac{1}{2t}-\frac{t}{16}(2\log t-1)$
is decreasing in $t.$ This implies that $\frac{1}{2t}-\frac{t}{16}(2\log t-1)\leq\frac{1}{10}-\frac{5}{16}(2\log5-1)<0$
for $t\geq5.$ So $t\log t/I_{-1}(t)$ is decreasing in $t$ for $t\geq5.$

We now prove that the function $I_{-1}(2x)/I(3x)$ is decreasing for
$x\geq5/3.$ It suffices to prove 
\begin{equation}
\left(\frac{I_{-1}(2x)}{I_{-1}(3x)}\right)^{\prime}=\frac{2I_{-1}^{\prime}(2x)I_{-1}(3x)-3I_{-1}(2x)I_{-1}^{\prime}(3x)}{I_{-1}(3x)^{2}}<0\label{eq:8-2}
\end{equation}
for all $x>0.$ Utilizing the functional relation for the modified
Bessel function $I_{-1}:$
\[
I_{-1}^{\prime}(x)=I_{0}(x)-x^{-1}I_{-1}(x),
\]
we get 
\begin{equation}
\begin{aligned} & 2I_{-1}^{\prime}(2x)I_{-1}(3x)-3I_{-1}(2x)I_{-1}^{\prime}(3x)\\
 & =2I_{0}(2x)I_{-1}(3x)-3I_{0}(3x)I_{-1}(2x)\\
 & =\frac{1}{x}\left(\frac{2xI_{0}(2x)}{I_{-1}(2x)}-\frac{3xI_{0}(3x)}{I_{-1}(3x)}\right).
\end{aligned}
\label{eq:8-1}
\end{equation}
Recall a result of Simpson and Spector \cite{SS} that for all $\alpha>0,$
the function $xI_{\alpha}(x)/I_{\alpha+1}(x)$ is strictly monotone
decreasing on $(0,+\infty).$ This, together the fact $I_{-1}(x)=I_{1}(x)$
and \eqref{eq:8-1}, completes the proof of \eqref{eq:8-2}.
\end{proof}
Let
\[
L_{\delta,n}=\frac{\pi\sqrt{2\delta}}{8}\sqrt{n+\frac{\delta}{2}}.
\]
Then 
\begin{align*}
\left|c_{2}^{(8)}(n)-\hat{c}_{2}^{(8)}(n)-\tilde{c}_{2}^{(8)}(n)\right| & \leq\left|E_{2,N}(n)\right|+8\sqrt{2}\pi\left|\frac{1}{1-e^{\frac{\pi\mathrm{i}}{4}}}\right|\left(n+1\right)^{-1/2}\\
 & \;\times\left(L_{2,n}\log\left(\frac{N}{8}\right)+3I_{-1}\left(\frac{2L_{2,n}}{3}\right)-\left(\frac{5}{2}-\gamma-\frac{4}{N}\right)L_{2,n}\right).
\end{align*}
When $n\geq565,$ we have
\begin{align*}
 & L_{2,n}=\frac{\pi}{4}\sqrt{n+1},\\
 & 2\sqrt{\pi\left(n+1\right)}\geq2\sqrt{566\pi}\geq84
\end{align*}
and so
\[
N=\left\lceil 2\sqrt{\pi\left(n+1\right)}\right\rceil \leq2\sqrt{\pi\left(n+1\right)}+1\leq\frac{85}{84}\times2\sqrt{\pi\left(n+1\right)}=\frac{85}{84}\times\frac{8}{\sqrt{\pi}}L_{2,n}.
\]
Then
\begin{align*}
\left|c_{2}^{(8)}(n)-\hat{c}_{2}^{(8)}(n)-\tilde{c}_{2}^{(8)}(n)\right| & \leq\left|E_{2,N}(n)\right|+8\sqrt{2}\pi\left|\frac{1}{1-e^{\frac{\pi\mathrm{i}}{4}}}\right|\left(n+1\right)^{-1/2}\\
 & \;\times\left(L_{2,n}\log\left(\sqrt{\frac{1}{\pi}}L_{2,n}\frac{85}{84}\right)+3I_{-1}\left(\frac{2L_{2,n}}{3}\right)-\left(\frac{5}{2}-\gamma-\frac{4}{84}\right)L_{2,n}\right).
\end{align*}
This implies that
\begin{align*}
 & \left|\frac{c_{2}^{(8)}(n)-\hat{c}_{2}^{(8)}(n)-\tilde{c}_{2}^{(8)}(n)}{\tilde{c}_{2}^{(8)}(n)}\right|\left|\cos\left(\frac{3\pi}{8}n+\pi\right)+\cos\left(\frac{5\pi}{8}n+\pi\right)\right|\\
 & \leq16\sqrt{2}\left|\frac{1}{1-e^{\frac{\pi\mathrm{i}}{4}}}\right|\frac{L_{2,n}\log\left(L_{2,n}\right)+3I_{-1}\left(\frac{2L_{2,n}}{3}\right)+\mu(2)L_{2,n}}{I_{-1}(L_{2,n})}
\end{align*}
with $\mu(2)=25564.$ It is easy to verify that $L_{2,n}\geq5$ for
$n\geq565.$ Apply Lemma \ref{lem:zplus2}. We obtain that the sequence
\[
\left\{ \frac{L_{2,n}\log\left(L_{2,n}\right)+3I_{-1}\left(\frac{2L_{2,n}}{3}\right)+\mu(2)L_{2,n}}{I_{-1}(L_{2,n})}\right\} _{n\geq565}
\]
is decreasing monotonically with respect to $n.$ Employing the software
\emph{Mathematica} we can verify that 
\[
16\sqrt{2}\left|\frac{1}{1-e^{\frac{\pi\mathrm{i}}{4}}}\right|\widehat{M}\left(\mu(2),L_{2,565}\right)<\min_{n=2,6,10,14}\left|\cos\left(\frac{3\pi}{8}n+\pi\right)+\cos\left(\frac{5\pi}{8}n+\pi\right)\right|.
\]
This indicates that when $n\geq565,$ and $n\equiv2,6,10,14\left(\bmod16\right),$
the coefficient $c_{2}^{(8)}(n)$ has the same sign as $\cos\left(\frac{3\pi}{8}n+\pi\right)+\cos\left(\frac{5\pi}{8}n+\pi\right).$
Combining the above results we get that the coefficients $\{c_{2}^{(8)}(n)\}_{n\geq565}$
exhibit the sign pattern $+-++-+--+--+-++-.$ We employ \emph{Mathematica}
to compute directly the coefficients $\{c_{2}^{(8)}(n)\}_{0\leq n<565}$
and find that they also exhibit the same sign pattern.

If $\delta<0,$ then we let $\delta=-\delta'$ and
\[
\left(Q_{8}(q)\right)^{\delta}=\left(Q_{8}^{-1}(q)\right)^{\delta^{\prime}}=\sum_{n=1}^{\infty}\bar{c}_{\delta^{\prime}}^{(8)}(n)q^{n}.
\]
We first consider the case 
\[
-0.99\leq\delta\leq\frac{7-\sqrt{73}}{2}.
\]
Proceeding as in the proof of Theorem \ref{thm:6}, we get that when
$n\geq479,$ the coefficient $\bar{c}_{\delta^{\prime}}^{(8)}(n)$
has the same sign as 
\[
\grave{c}_{\delta^{\prime}}^{(8)}(n):=\frac{\pi\sqrt{2\delta^{\prime}}}{4}\cos\left(\frac{\pi}{4}\left(n-\delta^{\prime}\right)\right)\left(n-\frac{\delta^{\prime}}{5}\right)^{-1/2}I_{-1}\left(\frac{\pi\sqrt{2\delta^{\prime}}}{4}\sqrt{n+\frac{\delta^{\prime}}{2}}\right).
\]
Then the coefficients $\{\bar{c}_{\delta^{\prime}}^{(8)}(n)\}_{n\geq479}$
exhibit the sign pattern $+++----+.$ We employ \emph{Mathematica}
to calculate the first $479$ coefficients of $Q_{8}^{\delta}(q)$
for $-0.99\leq\delta\leq\frac{7-\sqrt{73}}{2},$ and find ranges,
in which the coefficients $\{\bar{c}_{\delta^{\prime}}^{(8)}(n)\}_{0\leq n<479}$
are located for $-0.99\leq\delta\leq\frac{7-\sqrt{73}}{2}.$ The ranges
are displayed in Table 5 of the appendix. From the table, we obtain
that the coefficients $\{\bar{c}_{\delta^{\prime}}^{(8)}(n)\}_{0\leq n<479}$
also exhibit the sign pattern $+++----+.$

When $\delta=-2,$ the coefficient $\bar{c}_{2}^{(8)}(n)$ has the
same sign as $\tilde{c}_{2}(n)$ for $n\geq140,$  and $n\equiv1,2,3,5,6,7\left(\bmod8\right).$
Proceeding as in the proof of Theorem \ref{thm:6}, we obtain that
for $n\geq567$ and $n\equiv0,4\left(\bmod8\right),$ the coefficient
$\bar{c}_{2}^{(8)}(n)$ has the same sign as
\[
\frac{\pi}{2}\left(\cos\left(\frac{\pi}{8}n-\frac{\pi}{4}\right)+\cos\left(\frac{7\pi}{8}n-\frac{7\pi}{4}\right)\right)\left(n-1\right)^{-1/2}I_{-1}\left(\frac{\pi}{4}\sqrt{n-1}\right)
\]
Then when $n\geq567,$ the coefficients of $Q_{8}^{-2}(q)=\left(Q_{8}^{-1}(q)\right)^{2}$
exhibit the length $16$ sign pattern $+++++----+++----.$ Employing
\emph{Mathematica} to directly compute the coefficients of $Q_{8}^{-2}(q)$
when $n\leq567,$ we easily find that the coefficients exhibit the
same sign pattern. This finishes the proof of Theorem \ref{thm:8}.
\qed

\subsection{Proof of Theorem \ref{thm:10}}

Let $Q_{10}(q)$ be defined as in Conjecture \ref{conj:4}. Then ${\bf m}=\{1,3\},$
${\bf n}=\{10,10\},$ and ${\bf u}=\{1,-1\}.$ Hence $L=10,$ and
$\Omega=\frac{72}{5}.$ We compute that 
\[
\mathcal{L}_{>0}=\{(3,10),(7,10)\}.
\]
After computations, we find that the maximum of $\sqrt{\Delta(\varkappa,\kappa)}/k$
with $(\varkappa,\kappa)\in\mathcal{L}_{>0}$ and $k\equiv\kappa\;\text{(mod\;}L)$
is $\frac{3\sqrt{2}}{5\sqrt{5}}$ and $\sqrt{\Delta(\varkappa,\kappa)}/k$
attains this maximum when
\[
(\varkappa,\kappa,k)=(3,10,10),\;(7,10,10).
\]
When $(\varkappa,\kappa)=(3,10),(7,10),$ we have
\[
\frac{24}{\Delta(\varkappa,\kappa)}\min_{1\leq i\leq I}\left(\varUpsilon\left(\lambda_{m_{i},n_{i}}^{*}(h,k)\right)\frac{d_{i}^{2}}{n_{i}}\right)=\frac{5}{3}>\delta
\]
when $0<\delta\leq1.$ We let 
\[
Q_{10}^{\delta}(q)=:\sum_{n=1}^{\infty}c_{\delta}^{(10)}(n)q^{n}
\]
and
\[
\hat{c}_{\delta}^{(10)}(n):=\frac{\pi\sqrt{3\delta}}{2\sqrt{5}}\cos\left(\frac{3\pi}{5}\left(n+\frac{8\delta}{15}\right)\right)\left(n+\frac{3\delta}{5}\right)^{-1/2}I_{-1}\left(\frac{\pi\sqrt{3\delta}}{2\sqrt{5}}\sqrt{n+\frac{3\delta}{5}}\right),
\]
and set
\[
N=\left\lceil \sqrt{4\pi\left(n+\frac{3\delta}{5}\right)}\right\rceil .
\]
Then 
\begin{align*}
c_{\delta}^{(10)}(n)-\hat{c}_{\delta}^{(10)}(n) & =\sum_{\substack{10<k\leq N\\
k\equiv10\,\left(\text{mod}\,10\right)
}
}\sum_{\substack{0\leq h\leq k\\
h\equiv3\,\left(\text{mod}\,10\right)
}
}\frac{2\pi\delta^{1/2}}{k}e^{-2\pi\mathrm{i}nh/k}\mathrm{i}^{\delta\sum_{j=1}^{I}u_{j}}(-1)^{\delta\sum_{j=1}^{I}u_{j}\lambda_{m_{j},n_{j}}(h,k)}\\
 & \;\times\omega_{h.k}^{\delta}\varTheta_{h,k}^{\delta}\Pi_{h,k}\left(n+\frac{3\delta}{5}\right)^{-1/2}I_{-1}\left(\frac{5\pi\sqrt{3\delta}}{\sqrt{5}k}\sqrt{n+\frac{3\delta}{5}}\right)\\
 & +\sum_{\substack{10<k\leq N\\
k\equiv10\,\left(\text{mod}\,10\right)
}
}\sum_{\substack{0\leq h\leq k\\
h\equiv7\,\left(\text{mod}\,10\right)
}
}\frac{2\pi\delta^{1/2}}{k}e^{-2\pi\mathrm{i}nh/k}\mathrm{i}^{\delta\sum_{j=1}^{I}u_{j}}(-1)^{\delta\sum_{j=1}^{I}u_{j}\lambda_{m_{j},n_{j}}(h,k)}\\
 & \;\times\omega_{h.k}^{\delta}\varTheta_{h,k}^{\delta}\Pi_{h,k}\left(n+\frac{3\delta}{5}\right)^{-1/2}I_{-1}\left(\frac{5\pi\sqrt{3\delta}}{\sqrt{5}k}\sqrt{n+\frac{3\delta}{5}}\right)+E_{\delta,N}(n).
\end{align*}
and so 
\[
\begin{aligned} & \left|c_{\delta}^{(10)}(n)-\hat{c}_{\delta}^{(10)}(n)\right|\\
 & \leq\left|E_{\delta,N}(n)\right|+\sum_{\substack{10<k\leq N\\
k\equiv10\,\left(\text{mod}\,10\right)
}
}\sum_{\substack{0\leq h\leq k\\
h\equiv3\,\left(\text{mod}\,10\right)
}
}\frac{2\pi\delta^{1/2}}{k}\left|\Pi_{h,k}\right|\left(n+\frac{3\delta}{5}\right)^{-1/2}I_{-1}\left(\frac{5\pi\sqrt{3\delta}}{\sqrt{5}k}\sqrt{n+\frac{3\delta}{5}}\right)\\
 & +\sum_{\substack{10<k\leq N\\
k\equiv10\,\left(\text{mod}\,10\right)
}
}\sum_{\substack{0\leq h\leq k\\
h\equiv7\,\left(\text{mod}\,10\right)
}
}\frac{2\pi\delta^{1/2}}{k}\left|\Pi_{h,k}\right|\left(n+\frac{3\delta}{5}\right)^{-1/2}I_{-1}\left(\frac{5\pi\sqrt{3\delta}}{\sqrt{5}k}\sqrt{n+\frac{3\delta}{5}}\right)\\
 & \leq\sum_{\substack{10<k\leq N\\
k\equiv10\,\left(\text{mod}\,10\right)
}
}8\delta^{1/2}\pi\left|\frac{1}{1-e^{\frac{\pi\mathrm{i}}{5}}}\right|\left(n+\frac{3\delta}{5}\right)^{-1/2}I_{-1}\left(\frac{5\pi\sqrt{3\delta}}{\sqrt{5}k}\sqrt{n+\frac{3\delta}{5}}\right)\\
 & =8\delta^{1/2}\pi\left|\frac{1}{1-e^{\frac{\pi\mathrm{i}}{5}}}\right|\left(n+\frac{3\delta}{5}\right)^{-1/2}\sum_{1<k^{\prime}\leq N/10}I_{-1}\left(\frac{\pi\sqrt{3\delta}}{2\sqrt{5}k^{\prime}}\sqrt{n+\frac{3\delta}{5}}\right).
\end{aligned}
\]
Let 
\[
L_{\delta,n}=\frac{\pi\sqrt{3\delta}}{4\sqrt{5}}\sqrt{n+\frac{3\delta}{5}}.
\]

When $\delta=1,$ we use the software \emph{Mathematica} to get
\begin{align*}
\left|E_{1,N}(n)\right| & \leq61597.1.
\end{align*}
Applying Lemma \ref{lem:z} weget 
\begin{align*}
\left|c_{1}^{(10)}(n)-\hat{c}_{1}^{(10)}(n)\right| & \leq\left|E_{1,N}(n)\right|+8\pi\left|\frac{1}{1-e^{\frac{\pi i}{5}}}\right|\left(n+\frac{3}{5}\right)^{-1/2}\\
 & \;\times\left(L_{1,n}\log\left(\frac{N}{10}\right)+2I_{-1}\left(L_{1,n}\right)-\left(2-\gamma-\frac{5}{N}\right)L_{1,n}\right),
\end{align*}
with 
\[
L_{1,n}=\frac{\pi\sqrt{3}}{4\sqrt{5}}\sqrt{n+\frac{3}{5}}.
\]
 For $n\geq241,$ we have
\[
\sqrt{4\pi\left(n+\frac{3\delta}{5}\right)}\geq\sqrt{4\pi\left(241+\frac{3}{5}\right)}\geq55,
\]
so that

\[
N=\left\lceil \sqrt{4\pi\left(n+\frac{3}{5}\right)}\right\rceil \leq\sqrt{4\pi\left(n+\frac{3}{5}\right)}+1\leq\frac{56}{55}\sqrt{4\pi\left(n+\frac{3}{5}\right)}\leq\frac{56}{55}\times\frac{8\sqrt{5}L_{1,n}}{\sqrt{3\pi}}.
\]
Then 
\begin{align*}
\left|c_{1}^{(10)}(n)-\hat{c}_{1}^{(10)}(n)\right| & \leq\left|E_{1,N}(n)\right|+8\pi\left|\frac{1}{1-e^{\frac{\pi\mathrm{i}}{5}}}\right|\left(n+\frac{3}{5}\right)^{-1/2}\\
 & \;\times\left(L_{1,n}\log\left(\frac{56}{55}\times\frac{8\sqrt{5}L_{1,n}}{\sqrt{3\pi}}\right)+2I_{-1}\left(L_{1,n}\right)-\left(2-\gamma-\frac{5}{55}\right)L_{1,n}\right).
\end{align*}
From this we deduce that 
\begin{align*}
 & \left|\frac{c_{1}^{(10)}(n)}{\hat{c}_{1}^{(10)}(n)}-1\right|\left|\cos\left(\frac{3\pi}{5}\left(n+\frac{8}{15}\right)\right)\right|\\
 & \leq16\sqrt{\frac{5}{3}}\left|\frac{1}{1-e^{\frac{\pi\mathrm{i}}{5}}}\right|\frac{L_{1,n}\log\left(L_{1,n}\right)+2I_{-1}(L_{1,n})+\nu(1)L_{1,n}}{I_{-1}(2L_{1,n})}
\end{align*}
with 
\begin{align*}
\nu(1) & =2490.26.
\end{align*}
For $n\geq241,$ we know $L_{1,n}\geq3.$ Apply Lemma \ref{lem:z2}.
This gives that the sequence
\[
\left\{ \frac{L_{1,n}\log\left(L_{1,n}\right)+2I_{-1}(L_{1,n})+\nu(1)L_{1,n}}{I_{-1}(2L_{1,n})}\right\} _{n\geq241}
\]
is decreasing monotonically with respect to $n.$ Employing the software
\emph{Mathematica} we can verify that 
\[
16\sqrt{\frac{5}{3}}\left|\frac{1}{1-e^{\frac{\pi\mathrm{i}}{5}}}\right|M\left(\nu(1),L_{1,241}\right)<\min_{n=0,1,\cdots,9}\left|\cos\left(\frac{3\pi}{5}\left(n+\frac{8}{15}\right)\right)\right|.
\]
This tells us that when $n\geq241,$ the coefficient $c_{1}^{(10)}(n)$
has the same sign as $\cos\left(\frac{3\pi}{5}\left(n+\frac{8}{15}\right)\right).$
The coefficients of $Q_{10}(q)$ exhibit the sign pattern $+-++--+--+.$
We utilize \emph{Mathematica} to compute the first $242$ coefficients
of $Q_{10}(q)$ and find that they also follow the same sign pattern.

When $\delta=-1,$ we let 
\[
\left(Q_{10}(q)\right)^{-1}=:\sum_{n=1}^{\infty}\bar{c}^{(10)}(n)q^{n}.
\]
Then ${\bf m}=\{1,3\},$ ${\bf n}=\{10,10\},$ and ${\bf u}=\{-1,1\}.$
Hence $L=10,$ and $\Omega=-72/5.$ Proceeding as in the proof of
Theorem \ref{thm:6}, we can get when $n\geq211,$ the coefficient
$\bar{c}^{(10)}(n)$ has the same sign as $\cos\left(\frac{\pi}{5}\left(n-\frac{6}{5}\right)\right).$
This implies that the coefficients of $Q_{10}(q)^{-1}$ exhibit the
sign pattern $++++-----+$ when $n\geq211.$ Similarly, we compute
the first $212$ coefficients of $Q_{10}^{-1}(q)$ and find that they
also follow the same sign pattern. This ends the proof of Theorem
\ref{thm:10}. \qed

\subsection{Proof of Theorem \ref{thm:12}}

Let $Q_{12}(q)$ be defined as in Conjecture \ref{conj:5}. Then ${\bf m}=\{1,5\},$
${\bf n}=\{12,12\},$ and ${\bf u}=\{1,-1\}.$ Hence $L=12,$ and
$\Omega=24.$ We compute that 
\[
\mathcal{L}_{>0}=\{(5,12),(7,12)\}.
\]
It is easy to find that the maximum of $\sqrt{\Delta(\varkappa,\kappa)}/k$
with $(\varkappa,\kappa)\in\mathcal{L}_{>0}$ and $k\equiv\kappa\;\text{(mod\;}L)$
is $\frac{\sqrt{6}}{6}$ and $\sqrt{\Delta(\varkappa,\kappa)}/k$
attains the value $\frac{\sqrt{6}}{6}$ when
\[
(\varkappa,\kappa,k)=(5,12,12),\;(7,12,12).
\]
When $(\varkappa,\kappa)=(5,12),$ 
\[
\frac{24}{\Delta(\varkappa,\kappa)}\min_{1\leq i\leq I}\left(\varUpsilon\left(\lambda_{m_{i},n_{i}}^{*}(h,k)\right)\frac{d_{i}^{2}}{n_{i}}\right)=1
\]
takes its minimum value, so when $\delta=1,$ the inequality \eqref{eq:thm1}
is satisfied and when $2\leq\delta\leq3,$ the inequality \eqref{eq:thm1}
is not satisfied.

For $\delta>0,$ we let 
\[
Q_{12}^{\delta}(q)=\sum_{n=1}^{\infty}c_{\delta}^{(12)}(n)q^{n}.
\]
 Proceeding as in the proof of Theorem \ref{thm:6}, we can get when
$n\geq326,$ and $n\equiv0,1,2,4,5,6,7,8,10,11\left(\bmod12\right),$
the coefficient $c_{1}^{(12)}(n)$ has the same sign as $\cos\left(\frac{5\pi}{6}n\right),$
that is 
\[
\begin{cases}
c_{1}^{(12)}(n)>0, & \mathrm{if}\;n\equiv0,2,5,7,10\left(\bmod12\right),\\
c_{1}^{(12)}(n)<0, & \mathrm{if}\;n\equiv1,4,6,8,11\left(\bmod12\right).
\end{cases}
\]
Recall from \cite[Theorem 2.]{AB} that if
\[
\frac{(q^{r},q^{2k-r};q^{2k})}{(q^{k-r},q^{k+r};q^{2k})}=\sum_{n=0}^{\infty}a_{n}q^{n},
\]
then $a_{kn+r(k-r+1)/2}$ is always zero. Choose $k=6,$ $r=1,$ we
can get $c_{1}^{(12)}(6n+3)=0,$ so the coefficients of $Q_{12}(q)$
exhibit the sign pattern $+-+\,0-+-+-0+-.$

When $\delta\in[-1,0),$ we let $\delta=-\delta'$ and
\[
Q_{12}(q)^{\delta}=\left(Q_{12}(q)^{-1}\right)^{\delta^{\prime}}=:\sum_{n=1}^{\infty}\bar{c}_{\delta^{\prime}}^{(12)}(n)q^{n}.
\]
Then ${\bf m}=\{1,5\},$ ${\bf n}=\{12,12\},$ and ${\bf u}=\{-1,1\}.$
Hence $L=12,$ and $\Omega=-24.$ We compute that 
\[
\mathcal{L}_{>0}=\{(1,12),(11,12)\}.
\]
After some computations, we find taht the maximum of $\sqrt{\Delta(\varkappa,\kappa)}/k$
with $(\varkappa,\kappa)\in\mathcal{L}_{>0}$ and $k\equiv\kappa\;\text{(mod\;}L)$
is $\frac{\sqrt{6}}{6}$ and $\sqrt{\Delta(\varkappa,\kappa)}/k$
attains the maximum when
\[
(\varkappa,\kappa,k)=(1,12,12),\;(11,12,12).
\]
When $(\varkappa,\kappa)=(1,12),$ we have
\[
\frac{24}{\Delta(\varkappa,\kappa)}\min_{1\leq i\leq I}\left(\varUpsilon\left(\lambda_{m_{i},n_{i}}^{*}(h,k)\right)\frac{d_{i}^{2}}{n_{i}}\right)=1
\]
takes its minimum value. So when $0<\delta^{\prime}\leq1,$ the inequality
\eqref{eq:thm1} is satisfied. Proceeding as in the proof of Theorem
\ref{thm:6}, we can get when $n\geq328,$ and $n\equiv0,1,2,3,4,6,7,8,9,10\left(\bmod12\right),$
the coefficient $\bar{c}_{1}^{(12)}(n)$ has the same sign as $\cos\left(\frac{\pi}{6}n\right),$
that is 
\[
\begin{cases}
\bar{c}_{1}^{(12)}(n)>0, & \mathrm{if}\;n\equiv0,1,2,3,4\left(\bmod12\right),\\
\bar{c}_{1}^{(12)}(n)<0, & \mathrm{if}\;n\equiv6,7,8,9,10\left(\bmod12\right).
\end{cases}
\]
Utilizing \cite[Theorem 2.]{AAR}, we find that $\bar{c}_{1}^{(12)}(6n+5)=0,$
so the coefficients of $Q_{12}^{-1}(q)$ exhibit the sign pattern
$+++++\,0-----0.$

For $0.001\leq\delta^{\prime}\leq0.499,$ proceeding as in the proof
of Theorem \ref{thm:6}, we can get that when $n\geq1283,$ the coefficient
$\bar{c}_{\delta^{\prime}}^{(12)}(n)$ has the same sign as 
\[
\tilde{c}_{\delta^{\prime}}^{(12)}(n):=\frac{\pi\sqrt{\delta}}{3}\cos\left(\frac{\pi}{6}\left(n-2\delta^{\prime}\right)\right)\left(n-\delta^{\prime}\right)^{-1/2}I_{-1}\left(\frac{\pi\sqrt{\delta}}{3}\sqrt{n-\delta^{\prime}}\right).
\]
So $Q_{12}^{\delta}(q)=\left(Q_{12}^{-1}(q)\right)^{\delta^{\prime}}$
exhibit the sign pattern $++++------++.$ Similarly, for $0.501\leq\delta^{\prime}\leq0.999,$
we can obtain that when $n\geq1277,$ the coefficient $\bar{c}_{\delta^{\prime}}^{(12)}(n)$
has the same sign as $\tilde{c}_{\delta^{\prime}}^{(12)}(n).$ Thus,
the coefficients of $Q_{12}^{\delta}(q)=\left(Q_{12}^{-1}(q)\right)^{\delta^{\prime}}$
exhibit the sign pattern $+++++------+.$

For $\delta^{\prime}=\frac{1}{2},$ we know that when $n\geq439,$
and $n\equiv0,1,2,3,5,6,7,8,9,11\left(\bmod12\right),$ the coefficient
$\bar{c}_{1/2}^{(12)}(n)$ has the same sign as $\tilde{c}_{1/2}^{(12)}(n).$
Similarly, we can deduce that  for $n\geq1859,$
\[
48\left|\frac{1}{1-e^{\frac{\pi\mathrm{i}}{6}}}\right|\widehat{M}\left(\mu(1/2),L_{\frac{1}{2},1859}\right)<\min_{n=4,10,16,22}\left|\cos\left(\frac{\pi}{12}n-\frac{1}{12}\right)+\cos\left(\frac{11\pi}{12}n+\frac{1}{12}\right)\right|.
\]
Then when $n\geq1859,$ and $n\equiv4,10,16,22\left(\bmod24\right),$
the coefficient $\bar{c}_{1/2}^{(12)}(n)$ has the same sign as $\cos\left(\frac{\pi}{12}n-\frac{1}{12}\right)+\cos\left(\frac{11\pi}{12}n+\frac{1}{12}\right).$
Combining the above results we obtain that the coefficients of $Q_{12}^{-1/2}(q)$
exhibit the sign pattern $+++++------+++++------++.$ For $n<1859,$
we utilized \emph{Mathematica} to confirm that the coefficients of
$Q_{12}^{-1/2}(q)$ exhibit the same sign pattern. The proof of Theorem
\ref{thm:12} is complete. \qed

\section*{Appendix}
\begin{center}
\vspace{-35pt}
\par\end{center}

\noindent \begin{center}
\begin{longtable}[c]{cccccc}
\caption{Ranges in which $\{c_{\delta}^{(5)}(n)\}_{0\protect\leq n\protect\leq175}$
are located for $1\protect\leq\delta\protect\leq(\sqrt{97}-5)/2$}
\tabularnewline
\hline 
$n$ & Range & $n$ & Range & $n$ & Range\tabularnewline
\hline 
\endhead
\hline 
 &  &  &  &  & \tabularnewline
\endfoot
$0$ & $[1,1]$ & $59$ & $[-1005.4,-57]$ & $118$ & $[-1524803.7,-148]$\tabularnewline
$1$ & $[-2.4,-1]$ & $60$ & $[107,11752.2]$ & $119$ & $[-133054.2,-1296]$\tabularnewline
$2$ & $[1,4.2]$ & $61$ & $[-20423.3,-119]$ & $120$ & $[2360,1911942]$\tabularnewline
$3$ & $[-3.7,0]$ & $62$ & $[83,22161.4]$ & $121$ & $[-3233455.6,-2574]$\tabularnewline
$4$ & $[-1.3,0]$ & $63$ & $[-14667.6,-8]$ & $122$ & $[1771,3415223.4]$\tabularnewline
$5$ & $[1,5.3]$ & $64$ & $[-1616.4,-79]$ & $123$ & $[-2201715.1,-180]$\tabularnewline
$6$ & $[-10,-1]$ & $65$ & $[143,19277.3]$ & $124$ & $[-189402.8,-1630]$\tabularnewline
$7$ & $[1,11.4]$ & $66$ & $[-33343.4,-157]$ & $125$ & $[2949,2753054.5]$\tabularnewline
$8$ & $[-7.6,0]$ & $67$ & $[110,36004.7]$ & $126$ & $[-4649222.4,-3208]$\tabularnewline
$9$ & $[-2,-1]$ & $68$ & $[-23724.5,-12]$ & $127$ & $[2208,4903530.7]$\tabularnewline
$10$ & $[2,13.9]$ & $69$ & $[-2531.4,-103]$ & $128$ & $[-3156784.9,-230]$\tabularnewline
$11$ & $[-28,-3]$ & $70$ & $[191,30997.3]$ & $129$ & $[-267764.6,-2024]$\tabularnewline
$12$ & $[2,33.9]$ & $71$ & $[-53509.5,-212]$ & $130$ & $[3676,3937093.6]$\tabularnewline
$13$ & $[-25.4,0]$ & $72$ & $[147,57667.2]$ & $131$ & $[-6640782.8,-4004]$\tabularnewline
$14$ & $[-5.1,-1.1]$ & $73$ & $[-37917.9,-14]$ & $132$ & $[2750,6995709]$\tabularnewline
$15$ & $[4,39.9]$ & $74$ & $[-3949.1,-139]$ & $133$ & $[-4498306.3,-279]$\tabularnewline
$16$ & $[-72.1,-4]$ & $75$ & $[253,49254.2]$ & $134$ & $[-376621.7,-2523]$\tabularnewline
$17$ & $[3,82.1]$ & $76$ & $[-84705.3,-277]$ & $135$ & $[4563,5596381.3]$\tabularnewline
$18$ & $[-58.1,-1]$ & $77$ & $[193,90960.4]$ & $136$ & $[-9427466.2,-4957]$\tabularnewline
$19$ & $[-10,-3]$ & $78$ & $[-59616,-22]$ & $137$ & $[3406,9918781]$\tabularnewline
$20$ & $[6,85.4]$ & $79$ & $[-6042.4,-180]$ & $138$ & $[-6370023.7,-355]$\tabularnewline
$21$ & $[-159.6,-7]$ & $80$ & $[332,77025.7]$ & $139$ & $[-526550.3,-3114]$\tabularnewline
$22$ & $[5,186.5]$ & $81$ & $[-132228.8,-366]$ & $140$ & $[5646,7906498]$\tabularnewline
$23$ & $[-130,0]$ & $82$ & $[254,141739]$ & $141$ & $[-13304421.4,-6139]$\tabularnewline
$24$ & $[-22,-5]$ & $83$ & $[-92702.1,-25]$ & $142$ & $[4213,13982635.5]$\tabularnewline
$25$ & $[9,188.6]$ & $84$ & $[-9197.1,-238]$ & $143$ & $[-8970096.9,-428]$\tabularnewline
$26$ & $[-340.5,-10]$ & $85$ & $[432,119201.8]$ & $144$ & $[-732670.4,-3853]$\tabularnewline
$27$ & $[7,380.5]$ & $86$ & $[-204045.2,-473]$ & $145$ & $[6959,11108984.7]$\tabularnewline
$28$ & $[-260.6,-1]$ & $87$ & $[328,218087.7]$ & $146$ & $[-18672059.6,-7553]$\tabularnewline
$29$ & $[-38.5,-7]$ & $88$ & $[-142264.8,-35]$ & $147$ & $[5181,19601702]$\tabularnewline
$30$ & $[14,372.5]$ & $89$ & $[-13792.8,-305]$ & $148$ & $[-12560953.9,-536]$\tabularnewline
$31$ & $[-671.8,-16]$ & $90$ & $[561,182144.7]$ & $149$ & $[-1014043.8,-4727]$\tabularnewline
$32$ & $[11,756.5]$ & $91$ & $[-311230.5,-616]$ & $150$ & $[8558,15523302]$\tabularnewline
$33$ & $[-519.7,-1]$ & $92$ & $[426,332090.3]$ & $151$ & $[-26065393.3,-9292]$\tabularnewline
$34$ & $[-74.8,-11]$ & $93$ & $[-216249,-43]$ & $152$ & $[6368,27335974.8]$\tabularnewline
$35$ & $[20,722.9]$ & $94$ & $[-20565.9,-397]$ & $153$ & $[-17499723.6,-649]$\tabularnewline
$36$ & $[-1281.8,-22]$ & $95$ & $[720,275698.1]$ & $154$ & $[-1397269.3,-5809]$\tabularnewline
$37$ & $[16,1421.1]$ & $96$ & $[-469996.8,-787]$ & $155$ & $[10483,21583095.3]$\tabularnewline
$38$ & $[-959.6,-2]$ & $97$ & $[545,500345.9]$ & $156$ & $[-36203388.4,-11364]$\tabularnewline
$39$ & $[-127.3,-15]$ & $98$ & $[-325094.4,-58]$ & $157$ & $[7788,37929563.3]$\tabularnewline
$40$ & $[29,1322.2]$ & $99$ & $[-30308.1,-504]$ & $158$ & $[-24257181,-804]$\tabularnewline
$41$ & $[-2349,-33]$ & $100$ & $[923,412923.5]$ & $159$ & $[-1916118.7,-7088]$\tabularnewline
$42$ & $[23,2599.4]$ & $101$ & $[-702816.7,-1012]$ & $160$ & $[12813,29859903.9]$\tabularnewline
$43$ & $[-1753.2,-2]$ & $102$ & $[698,747019.8]$ & $161$ & $[-50040614.6,-13896]$\tabularnewline
$44$ & $[-225.8,-23]$ & $103$ & $[-484583.7,-70]$ & $162$ & $[9512,52378591.6]$\tabularnewline
$45$ & $[41,2387.2]$ & $104$ & $[-44401.6,-648]$ & $163$ & $[-33467189.9,-968]$\tabularnewline
$46$ & $[-4189.7,-45]$ & $105$ & $[1175,613315]$ & $164$ & $[-2616822.8,-8658]$\tabularnewline
$47$ & $[32,4590.3]$ & $106$ & $[-1041841.4,-1281]$ & $165$ & $[15612,41121289.2]$\tabularnewline
$48$ & $[-3071,-4]$ & $107$ & $[885,1105230]$ & $166$ & $[-68848821.8,-16906]$\tabularnewline
$49$ & $[-373.3,-30]$ & $108$ & $[-715639.4,-94]$ & $167$ & $[11572,71999014.2]$\tabularnewline
$50$ & $[57,4135.5]$ & $109$ & $[-64432.1,-816]$ & $168$ & $[-45961846.9,-1193]$\tabularnewline
$51$ & $[-7251.2,-64]$ & $110$ & $[1489,902751.4]$ & $169$ & $[-3558280.9,-10511]$\tabularnewline
$52$ & $[45,7937.3]$ & $111$ & $[-1531293.9,-1628]$ & $170$ & $[18975,56374058]$\tabularnewline
$53$ & $[-5295,-4]$ & $112$ & $[1122,1622160.6]$ & $171$ & $[-94305940.4,-20555]$\tabularnewline
$54$ & $[-624.6,-43]$ & $113$ & $[-1048806.2,-113]$ & $172$ & $[14058,98537759.4]$\tabularnewline
$55$ & $[78,7065.5]$ & $114$ & $[-92960.2,-1036]$ & $173$ & $[-62850291,-1431]$\tabularnewline
$56$ & $[-12306,-86]$ & $115$ & $[1877,1318912.7]$ & $174$ & $[-4819914.1,-12766]$\tabularnewline
$57$ & $[60,13374.2]$ & $116$ & $[-2233496.2,-2045]$ & $175$ & $[23000,76958092.2]$\tabularnewline
$58$ & $[-8872.4,-7]$ & $117$ & $[1409,2362106.4]$ & $176$ & $[-128630751.1,-24886]$\tabularnewline
\end{longtable}
\par\end{center}

\noindent \begin{center}
\vspace{-40pt}%
\begin{longtable}[c]{cccccc}
\caption{Ranges in which $\{c_{\delta}^{(5)}(n)\}_{0\protect\leq n\protect\leq175}$
are located for $\alpha\protect\leq\delta\protect\leq4$}
\tabularnewline
\hline 
$n$ & Range & $n$ & Range & $n$ & Range\tabularnewline
\hline 
\endhead
\hline 
 &  &  &  &  & \tabularnewline
\endfoot
$0$ & $[1,1]$ & $59$ & $[-1005.4,-57]$ & $118$ & $[-1524803.7,-148]$\tabularnewline
$1$ & $[-2.4,-1]$ & $60$ & $[107,11752.2]$ & $119$ & $[-133054.2,-1296]$\tabularnewline
$2$ & $[1,4.2]$ & $61$ & $[-20423.3,-119]$ & $120$ & $[2360,1911942]$\tabularnewline
$3$ & $[-3.7,0]$ & $62$ & $[83,22161.4]$ & $121$ & $[-3233455.6,-2574]$\tabularnewline
$4$ & $[-1.3,0]$ & $63$ & $[-14667.6,-8]$ & $122$ & $[1771,3415223.4]$\tabularnewline
$5$ & $[1,5.3]$ & $64$ & $[-1616.4,-79]$ & $123$ & $[-2201715.1,-180]$\tabularnewline
$6$ & $[-10,-1]$ & $65$ & $[143,19277.3]$ & $124$ & $[-189402.8,-1630]$\tabularnewline
$7$ & $[1,11.4]$ & $66$ & $[-33343.4,-157]$ & $125$ & $[2949,2753054.5]$\tabularnewline
$8$ & $[-7.6,0]$ & $67$ & $[110,36004.7]$ & $126$ & $[-4649222.4,-3208]$\tabularnewline
$9$ & $[-2,-1]$ & $68$ & $[-23724.5,-12]$ & $127$ & $[2208,4903530.7]$\tabularnewline
$10$ & $[2,13.9]$ & $69$ & $[-2531.4,-103]$ & $128$ & $[-3156784.9,-230]$\tabularnewline
$11$ & $[-28,-3]$ & $70$ & $[191,30997.3]$ & $129$ & $[-267764.6,-2024]$\tabularnewline
$12$ & $[2,33.9]$ & $71$ & $[-53509.5,-212]$ & $130$ & $[3676,3937093.6]$\tabularnewline
$13$ & $[-25.4,0]$ & $72$ & $[147,57667.2]$ & $131$ & $[-6640782.8,-4004]$\tabularnewline
$14$ & $[-5.1,-1.1]$ & $73$ & $[-37917.9,-14]$ & $132$ & $[2750,6995709]$\tabularnewline
$15$ & $[4,39.9]$ & $74$ & $[-3949.1,-139]$ & $133$ & $[-4498306.3,-279]$\tabularnewline
$16$ & $[-72.1,-4]$ & $75$ & $[253,49254.2]$ & $134$ & $[-376621.7,-2523]$\tabularnewline
$17$ & $[3,82.1]$ & $76$ & $[-84705.3,-277]$ & $135$ & $[4563,5596381.3]$\tabularnewline
$18$ & $[-58.1,-1]$ & $77$ & $[193,90960.4]$ & $136$ & $[-9427466.2,-4957]$\tabularnewline
$19$ & $[-10,-3]$ & $78$ & $[-59616,-22]$ & $137$ & $[3406,9918781]$\tabularnewline
$20$ & $[6,85.4]$ & $79$ & $[-6042.4,-180]$ & $138$ & $[-6370023.7,-355]$\tabularnewline
$21$ & $[-159.6,-7]$ & $80$ & $[332,77025.7]$ & $139$ & $[-526550.3,-3114]$\tabularnewline
$22$ & $[5,186.5]$ & $81$ & $[-132228.8,-366]$ & $140$ & $[5646,7906498]$\tabularnewline
$23$ & $[-130,0]$ & $82$ & $[254,141739]$ & $141$ & $[-13304421.4,-6139]$\tabularnewline
$24$ & $[-22,-5]$ & $83$ & $[-92702.1,-25]$ & $142$ & $[4213,13982635.5]$\tabularnewline
$25$ & $[9,188.6]$ & $84$ & $[-9197.1,-238]$ & $143$ & $[-8970096.9,-428]$\tabularnewline
$26$ & $[-340.5,-10]$ & $85$ & $[432,119201.8]$ & $144$ & $[-732670.4,-3853]$\tabularnewline
$27$ & $[7,380.5]$ & $86$ & $[-204045.2,-473]$ & $145$ & $[6959,11108984.7]$\tabularnewline
$28$ & $[-260.6,-1]$ & $87$ & $[328,218087.7]$ & $146$ & $[-18672059.6,-7553]$\tabularnewline
$29$ & $[-38.5,-7]$ & $88$ & $[-142264.8,-35]$ & $147$ & $[5181,19601702]$\tabularnewline
$30$ & $[14,372.5]$ & $89$ & $[-13792.8,-305]$ & $148$ & $[-12560953.9,-536]$\tabularnewline
$31$ & $[-671.8,-16]$ & $90$ & $[561,182144.7]$ & $149$ & $[-1014043.8,-4727]$\tabularnewline
$32$ & $[11,756.5]$ & $91$ & $[-311230.5,-616]$ & $150$ & $[8558,15523302]$\tabularnewline
$33$ & $[-519.7,-1]$ & $92$ & $[426,332090.3]$ & $151$ & $[-26065393.3,-9292]$\tabularnewline
$34$ & $[-74.8,-11]$ & $93$ & $[-216249,-43]$ & $152$ & $[6368,27335974.8]$\tabularnewline
$35$ & $[20,722.9]$ & $94$ & $[-20565.9,-397]$ & $153$ & $[-17499723.6,-649]$\tabularnewline
$36$ & $[-1281.8,-22]$ & $95$ & $[720,275698.1]$ & $154$ & $[-1397269.3,-5809]$\tabularnewline
$37$ & $[16,1421.1]$ & $96$ & $[-469996.8,-787]$ & $155$ & $[10483,21583095.3]$\tabularnewline
$38$ & $[-959.6,-2]$ & $97$ & $[545,500345.9]$ & $156$ & $[-36203388.4,-11364]$\tabularnewline
$39$ & $[-127.3,-15]$ & $98$ & $[-325094.4,-58]$ & $157$ & $[7788,37929563.3]$\tabularnewline
$40$ & $[29,1322.2]$ & $99$ & $[-30308.1,-504]$ & $158$ & $[-24257181,-804]$\tabularnewline
$41$ & $[-2349,-33]$ & $100$ & $[923,412923.5]$ & $159$ & $[-1916118.7,-7088]$\tabularnewline
$42$ & $[23,2599.4]$ & $101$ & $[-702816.7,-1012]$ & $160$ & $[12813,29859903.9]$\tabularnewline
$43$ & $[-1753.2,-2]$ & $102$ & $[698,747019.8]$ & $161$ & $[-50040614.6,-13896]$\tabularnewline
$44$ & $[-225.8,-23]$ & $103$ & $[-484583.7,-70]$ & $162$ & $[9512,52378591.6]$\tabularnewline
$45$ & $[41,2387.2]$ & $104$ & $[-44401.6,-648]$ & $163$ & $[-33467189.9,-968]$\tabularnewline
$46$ & $[-4189.7,-45]$ & $105$ & $[1175,613315]$ & $164$ & $[-2616822.8,-8658]$\tabularnewline
$47$ & $[32,4590.3]$ & $106$ & $[-1041841.4,-1281]$ & $165$ & $[15612,41121289.2]$\tabularnewline
$48$ & $[-3071,-4]$ & $107$ & $[885,1105230]$ & $166$ & $[-68848821.8,-16906]$\tabularnewline
$49$ & $[-373.3,-30]$ & $108$ & $[-715639.4,-94]$ & $167$ & $[11572,71999014.2]$\tabularnewline
$50$ & $[57,4135.5]$ & $109$ & $[-64432.1,-816]$ & $168$ & $[-45961846.9,-1193]$\tabularnewline
$51$ & $[-7251.2,-64]$ & $110$ & $[1489,902751.4]$ & $169$ & $[-3558280.9,-10511]$\tabularnewline
$52$ & $[45,7937.3]$ & $111$ & $[-1531293.9,-1628]$ & $170$ & $[18975,56374058]$\tabularnewline
$53$ & $[-5295,-4]$ & $112$ & $[1122,1622160.6]$ & $171$ & $[-94305940.4,-20555]$\tabularnewline
$54$ & $[-624.6,-43]$ & $113$ & $[-1048806.2,-113]$ & $172$ & $[14058,98537759.4]$\tabularnewline
$55$ & $[78,7065.5]$ & $114$ & $[-92960.2,-1036]$ & $173$ & $[-62850291,-1431]$\tabularnewline
$56$ & $[-12306,-86]$ & $115$ & $[1877,1318912.7]$ & $174$ & $[-4819914.1,-12766]$\tabularnewline
$57$ & $[60,13374.2]$ & $116$ & $[-2233496.2,-2045]$ & $175$ & $[23000,76958092.2]$\tabularnewline
$58$ & $[-8872.4,-7]$ & $117$ & $[1409,2362106.4]$ & $176$ & $[-128630751.1,-24886]$\tabularnewline
\end{longtable}
\par\end{center}

\noindent \begin{center}
\vspace{-40pt}%
\begin{longtable}[c]{cccccc}
\caption{Ranges in which $\{c_{\delta}^{(5)}(n)\}_{0\protect\leq n\protect\leq142}$
are located for $-3\protect\leq\delta\protect\leq-2$}
\tabularnewline
\hline 
$n$ & Range & $n$ & Range & $n$ & Range\tabularnewline
\hline 
\endhead
\hline 
 &  &  &  &  & \tabularnewline
\endfoot
$0$ & $[1,1]$ & $48$ & $[-12636,-1442]$ & $96$ & $[87308,3134741]$\tabularnewline
$1$ & $[2,3]$ & $49$ & $[-20877,-1090]$ & $97$ & $[6072,1900626]$\tabularnewline
$2$ & $[1,3]$ & $50$ & $[1016,2629.5]$ & $98$ & $[-2477301,-97066]$\tabularnewline
$3$ & $[-2.2,-2]$ & $51$ & $[2024,27874]$ & $99$ & $[-3956904,-71548]$\tabularnewline
$4$ & $[-6,-2]$ & $52$ & $[147,17427]$ & $100$ & $[64697,389287.7]$\tabularnewline
$5$ & $[0,2]$ & $53$ & $[-23364,-2350]$ & $101$ & $[125666,4937112]$\tabularnewline
$6$ & $[5,12]$ & $54$ & $[-38444,-1775]$ & $102$ & $[8773,2987237]$\tabularnewline
$7$ & $[0,9]$ & $55$ & $[1632,4639.8]$ & $103$ & $[-3884727,-139160]$\tabularnewline
$8$ & $[-15,-8]$ & $56$ & $[3244,50718]$ & $104$ & $[-6192612,-102453]$\tabularnewline
$9$ & $[-28,-6]$ & $57$ & $[226,31532]$ & $105$ & $[92402,599630.3]$\tabularnewline
$10$ & $[3,7.8]$ & $58$ & $[-42174,-3756]$ & $106$ & $[179256,7694952]$\tabularnewline
$11$ & $[14,48]$ & $59$ & $[-69000,-2818]$ & $107$ & $[12444,4646544]$\tabularnewline
$12$ & $[1,33]$ & $60$ & $[2605,8111.2]$ & $108$ & $[-6031739,-197999]$\tabularnewline
$13$ & $[-48,-18]$ & $61$ & $[5148,90312]$ & $109$ & $[-9596208,-145508]$\tabularnewline
$14$ & $[-87,-15]$ & $62$ & $[372,55929]$ & $110$ & $[131145,916256.7]$\tabularnewline
$15$ & $[7,17.7]$ & $63$ & $[-74344,-5898]$ & $111$ & $[253984,11880316]$\tabularnewline
$16$ & $[30,135]$ & $64$ & $[-121209,-4423]$ & $112$ & $[17663,7160853]$\tabularnewline
$17$ & $[2,90]$ & $65$ & $[4048,13777.2]$ & $113$ & $[-9277578,-279622]$\tabularnewline
$18$ & $[-134,-40]$ & $66$ & $[7990,157242]$ & $114$ & $[-14734337,-205256]$\tabularnewline
$19$ & $[-234,-32]$ & $67$ & $[558,96966]$ & $115$ & $[184608,1387431.9]$\tabularnewline
$20$ & $[21,44.3]$ & $68$ & $[-128580,-9130]$ & $116$ & $[357096,18176196]$\tabularnewline
$21$ & $[66,356]$ & $69$ & $[-208748,-6812]$ & $117$ & $[24728,10936480]$\tabularnewline
$22$ & $[6,237]$ & $70$ & $[6246,23175.8]$ & $118$ & $[-14146467,-392217]$\tabularnewline
$23$ & $[-330,-82]$ & $71$ & $[12278,268992]$ & $119$ & $[-22428240,-287458]$\tabularnewline
$24$ & $[-575,-65]$ & $72$ & $[872,165328]$ & $120$ & $[258363,2085718.1]$\tabularnewline
$25$ & $[42,92.8]$ & $73$ & $[-218262,-13918]$ & $121$ & $[499038,27577116]$\tabularnewline
$26$ & $[125,831]$ & $74$ & $[-353265,-10373]$ & $122$ & $[34608,16566303]$\tabularnewline
$27$ & $[8,540]$ & $75$ & $[9454,38203.7]$ & $123$ & $[-21392580,-546562]$\tabularnewline
$28$ & $[-762,-157]$ & $76$ & $[18558,452052]$ & $124$ & $[-33863613,-400161]$\tabularnewline
$29$ & $[-1296,-120]$ & $77$ & $[1298,276912]$ & $125$ & $[359028,3110699.3]$\tabularnewline
$30$ & $[107,203.1]$ & $78$ & $[-364679,-20971]$ & $126$ & $[692733,41506438]$\tabularnewline
$31$ & $[238,1848]$ & $79$ & $[-588297,-15576]$ & $127$ & $[47872,24895416]$\tabularnewline
$32$ & $[19,1191]$ & $80$ & $[14203,62325.5]$ & $128$ & $[-32101560,-757083]$\tabularnewline
$33$ & $[-1633,-286]$ & $81$ & $[27792,748552]$ & $129$ & $[-50738224,-553542]$\tabularnewline
$34$ & $[-2769,-222]$ & $82$ & $[1960,457272]$ & $130$ & $[496271,4608858.7]$\tabularnewline
$35$ & $[206,395.5]$ & $83$ & $[-600162,-31224]$ & $131$ & $[956318,62008230]$\tabularnewline
$36$ & $[419,3842]$ & $84$ & $[-965643,-23158]$ & $132$ & $[66127,37138891]$\tabularnewline
$37$ & $[28,2448]$ & $85$ & $[21022,100091.3]$ & $133$ & $[-47817984,-1042560]$\tabularnewline
$38$ & $[-3366,-507]$ & $86$ & $[41073,1221666]$ & $134$ & $[-75473118,-761523]$\tabularnewline
$39$ & $[-5634,-386]$ & $87$ & $[2862,744208]$ & $135$ & $[681750,6781172.8]$\tabularnewline
$40$ & $[366,778.2]$ & $88$ & $[-974622,-46008]$ & $136$ & $[1312430,91978176]$\tabularnewline
$41$ & $[732,7722]$ & $89$ & $[-1563840,-34030]$ & $137$ & $[90530,55012824]$\tabularnewline
$42$ & $[55,4889]$ & $90$ & $[30891,159226.6]$ & $138$ & $[-70738170,-1427957]$\tabularnewline
$43$ & $[-6624,-864]$ & $91$ & $[60208,1968912]$ & $139$ & $[-111497472,-1041816]$\tabularnewline
$44$ & $[-11028,-659]$ & $92$ & $[4225,1196562]$ & $140$ & $[931969,9917198.9]$\tabularnewline
$45$ & $[610,1435.5]$ & $93$ & $[-1562712,-67118]$ & $141$ & $[1792120,135524112]$\tabularnewline
$46$ & $[1224,14871]$ & $94$ & $[-2501817,-49587]$ & $142$ & $[123636,80953146]$\tabularnewline
$47$ & $[86,9342]$ & $95$ & $[44856,250048.8]$ & $143$ & $[-103955994,-1945616]$\tabularnewline
\end{longtable}
\par\end{center}

\noindent \begin{center}
\vspace{-40pt}%
\begin{longtable}[c]{cccccc}
\caption{Ranges in which $\{c_{\delta_{2}}^{(6)}(n)\}_{0\protect\leq n<57}$
are located for $3\protect\leq\delta_{2}\protect\leq4$}
\tabularnewline
\hline 
$n$ & Range & $n$ & Range & $n$ & Range\tabularnewline
\hline 
\endhead
\hline 
 &  &  &  &  & \tabularnewline
\endfoot
$0$ & $[1,1]$ & $20$ & $[111,449]$ & $40$ & $[1260,8916]$\tabularnewline
$1$ & $[3,4]$ & $21$ & $[91,420]$ & $41$ & $[1641,10708]$\tabularnewline
$2$ & $[3,6]$ & $22$ & $[84,396]$ & $42$ & $[2287,14378]$\tabularnewline
$3$ & $[1,4]$ & $23$ & $[123,508]$ & $43$ & $[2799,18320]$\tabularnewline
$4$ & $[0,1]$ & $24$ & $[208,809]$ & $44$ & $[2886,20557]$\tabularnewline
$5$ & $[3,4]$ & $25$ & $[279,1160]$ & $45$ & $[2691,20896]$\tabularnewline
$6$ & $[9,16]$ & $26$ & $[282,1332]$ & $46$ & $[2724,21576]$\tabularnewline
$7$ & $[12,28]$ & $27$ & $[234,1272]$ & $47$ & $[3405,25440]$\tabularnewline
$8$ & $[12,32]$ & $28$ & $[222,1225]$ & $48$ & $[4582,32970]$\tabularnewline
$9$ & $[9,28]$ & $29$ & $[321,1548]$ & $49$ & $[5556,41160]$\tabularnewline
$10$ & $[6,22]$ & $30$ & $[495,2300]$ & $50$ & $[5754,46086]$\tabularnewline
$11$ & $[12,32]$ & $31$ & $[630,3112]$ & $51$ & $[5429,47268]$\tabularnewline
$12$ & $[27,68]$ & $32$ & $[642,3525]$ & $52$ & $[5550,49194]$\tabularnewline
$13$ & $[42,116]$ & $33$ & $[568,3472]$ & $53$ & $[6834,57428]$\tabularnewline
$14$ & $[42,140]$ & $34$ & $[558,3476]$ & $54$ & $[8922,72612]$\tabularnewline
$15$ & $[28,120]$ & $35$ & $[741,4236]$ & $55$ & $[10611,88828]$\tabularnewline
$16$ & $[24,100]$ & $36$ & $[1082,5908]$ & $56$ & $[11031,98973]$\tabularnewline
$17$ & $[45,144]$ & $37$ & $[1368,7772]$ & $57$ & $[10632,102520]$\tabularnewline
$18$ & $[82,262]$ & $38$ & $[1404,8792]$ & ~ & ~\tabularnewline
$19$ & $[111,392]$ & $39$ & $[1262,8784]$ &  & \tabularnewline
\end{longtable}
\par\end{center}

\noindent \begin{center}
\vspace{-40pt}%
\begin{longtable}[c]{cccccc}
\caption{Ranges in which $\{\bar{c}_{\delta^{\prime}}^{(8)}(n)\}_{0\protect\leq n<479}$
are located for $\delta'\in[(\sqrt{73}-7)/2,0.99]$}
\tabularnewline
\hline 
$n$ & Range & $n$ & Range & $n$ & Range\tabularnewline
\hline 
\endhead
\hline 
 &  &  &  &  & \tabularnewline
\endfoot
$0$ & $[1,1]$ & $160$ & $[1586.2,7476.4]$ & $320$ & $[158350.8,1471930.2]$\tabularnewline
$1$ & $[0.8,1]$ & $161$ & $[1958.6,10890.9]$ & $321$ & $[194359.1,2125074.3]$\tabularnewline
$2$ & $[0.7,1]$ & $162$ & $[1178.9,7958]$ & $322$ & $[115436.2,1534146.1]$\tabularnewline
$3$ & $[-0.1,0]$ & $163$ & $[-481.6,-92.5]$ & $323$ & $[-68661.5,-17670.2]$\tabularnewline
$4$ & $[0,0]$ & $164$ & $[-8716.5,-1808.5]$ & $324$ & $[-1649560.2,-174883.5]$\tabularnewline
$5$ & $[-1,-0.7]$ & $165$ & $[-12720.3,-2241.3]$ & $325$ & $[-2381452.2,-214697.9]$\tabularnewline
$6$ & $[-1,-0.6]$ & $166$ & $[-9280.2,-1345]$ & $326$ & $[-1718858.3,-127475.8]$\tabularnewline
$7$ & $[0,0.2]$ & $167$ & $[107.9,556.1]$ & $327$ & $[19795.5,76518.4]$\tabularnewline
$8$ & $[0.7,1]$ & $168$ & $[2064.1,10160.5]$ & $328$ & $[193068.5,1847580.9]$\tabularnewline
$9$ & $[1.3,2]$ & $169$ & $[2562.9,14834.5]$ & $329$ & $[237027.4,2667026.1]$\tabularnewline
$10$ & $[0.4,1]$ & $170$ & $[1528.2,10796.6]$ & $330$ & $[140649.9,1924452.3]$\tabularnewline
$11$ & $[-0.3,0]$ & $171$ & $[-641.3,-125.7]$ & $331$ & $[-85226.1,-22162.4]$\tabularnewline
$12$ & $[-2,-1.3]$ & $172$ & $[-11855.2,-2364.9]$ & $332$ & $[-2068395.7,-213130.2]$\tabularnewline
$13$ & $[-2.9,-1.8]$ & $173$ & $[-17267.6,-2924.2]$ & $333$ & $[-2984961.3,-261538]$\tabularnewline
$14$ & $[-1.9,-0.8]$ & $174$ & $[-12574.7,-1747.3]$ & $334$ & $[-2153627,-155195.4]$\tabularnewline
$15$ & $[0,0.3]$ & $175$ & $[146.3,738.8]$ & $335$ & $[24796.6,94870.2]$\tabularnewline
$16$ & $[1.9,2.9]$ & $176$ & $[2700.9,13794.5]$ & $336$ & $[235108,2313977]$\tabularnewline
$17$ & $[2.1,3.9]$ & $177$ & $[3332.8,20071.1]$ & $337$ & $[288432.6,3338725.2]$\tabularnewline
$18$ & $[1.9,3.9]$ & $178$ & $[2000.2,14633.6]$ & $338$ & $[171204.7,2408742.4]$\tabularnewline
$19$ & $[-0.5,0]$ & $179$ & $[-849.7,-169.9]$ & $339$ & $[-105547.6,-27726.8]$\tabularnewline
$20$ & $[-3.9,-2.5]$ & $180$ & $[-15991.1,-3066.8]$ & $340$ & $[-2586760.4,-259106.5]$\tabularnewline
$21$ & $[-5.8,-2.9]$ & $181$ & $[-23296.9,-3795.4]$ & $341$ & $[-3732113.8,-317918.3]$\tabularnewline
$22$ & $[-4.8,-2.2]$ & $182$ & $[-16967,-2273.1]$ & $342$ & $[-2692036.4,-188657.7]$\tabularnewline
$23$ & $[0.1,0.7]$ & $183$ & $[197,976.1]$ & $343$ & $[30984.5,117363]$\tabularnewline
$24$ & $[2.8,4.9]$ & $184$ & $[3481.5,18526.2]$ & $344$ & $[285450.3,2890144.9]$\tabularnewline
$25$ & $[4.3,8.7]$ & $185$ & $[4314.1,26996.5]$ & $345$ & $[350238,4169350.8]$\tabularnewline
$26$ & $[2.1,5.8]$ & $186$ & $[2572.5,19627.9]$ & $346$ & $[207728.5,3006694.5]$\tabularnewline
$27$ & $[-0.9,-0.1]$ & $187$ & $[-1120.1,-228.2]$ & $347$ & $[-130430,-34604.1]$\tabularnewline
$28$ & $[-7.8,-4.2]$ & $188$ & $[-21477.9,-3966.1]$ & $348$ & $[-3227591,-314427]$\tabularnewline
$29$ & $[-11.6,-5.4]$ & $189$ & $[-31242,-4898.9]$ & $349$ & $[-4655063.6,-385651]$\tabularnewline
$30$ & $[-8.6,-3.2]$ & $190$ & $[-22718.8,-2923.9]$ & $350$ & $[-3356599,-228731.2]$\tabularnewline
$31$ & $[0.1,1.2]$ & $191$ & $[263.8,1283.4]$ & $351$ & $[38624,144875.6]$\tabularnewline
$32$ & $[6.3,11.7]$ & $192$ & $[4506.4,24842.7]$ & $352$ & $[346122.4,3602121.2]$\tabularnewline
$33$ & $[6.6,15.4]$ & $193$ & $[5557.4,36106.9]$ & $353$ & $[424423.4,5194314.1]$\tabularnewline
$34$ & $[4.6,12.5]$ & $194$ & $[3328.4,26279.6]$ & $354$ & $[251785.2,3745202.8]$\tabularnewline
$35$ & $[-1.4,-0.1]$ & $195$ & $[-1469,-304.7]$ & $355$ & $[-160837.7,-43086]$\tabularnewline
$36$ & $[-13.6,-6.8]$ & $196$ & $[-28643.3,-5092.4]$ & $356$ & $[-4017358.3,-380698.6]$\tabularnewline
$37$ & $[-21.2,-8.9]$ & $197$ & $[-41669.1,-6295.3]$ & $357$ & $[-5792720.5,-466865.1]$\tabularnewline
$38$ & $[-16.3,-5.7]$ & $198$ & $[-30304.3,-3765.5]$ & $358$ & $[-4175925.8,-276897.2]$\tabularnewline
$39$ & $[0.2,1.7]$ & $199$ & $[351.4,1679.9]$ & $359$ & $[48036,178466.1]$\tabularnewline
$40$ & $[8.2,17.5]$ & $200$ & $[5759.9,33014.9]$ & $360$ & $[418574.1,4478148.9]$\tabularnewline
$41$ & $[11.3,27.9]$ & $201$ & $[7124,48028.5]$ & $361$ & $[513306.7,6456433.3]$\tabularnewline
$42$ & $[6.4,20]$ & $202$ & $[4245.6,34880.4]$ & $362$ & $[304309,4653404.7]$\tabularnewline
$43$ & $[-2.3,-0.3]$ & $203$ & $[-1918.4,-404.8]$ & $363$ & $[-197927.1,-53524.8]$\tabularnewline
$44$ & $[-25.2,-11.7]$ & $204$ & $[-38055.5,-6527]$ & $364$ & $[-4989496.8,-460130.6]$\tabularnewline
$45$ & $[-36.5,-14.1]$ & $205$ & $[-55290.4,-8054.5]$ & $365$ & $[-7192176,-564088.8]$\tabularnewline
$46$ & $[-26.7,-8.2]$ & $206$ & $[-40158.9,-4803.6]$ & $366$ & $[-5183133.9,-334411.1]$\tabularnewline
$47$ & $[0.3,2.9]$ & $207$ & $[465.7,2188.6]$ & $367$ & $[59608,219401.5]$\tabularnewline
$48$ & $[14.8,32.9]$ & $208$ & $[7377.5,43779.2]$ & $368$ & $[505523.8,5555946]$\tabularnewline
$49$ & $[18.1,48]$ & $209$ & $[9094.4,63566.5]$ & $369$ & $[619604.8,8007393]$\tabularnewline
$50$ & $[11.8,37.2]$ & $210$ & $[5438.5,46198.8]$ & $370$ & $[367381.2,5770213]$\tabularnewline
$51$ & $[-3.5,-0.4]$ & $211$ & $[-2494.4,-535.1]$ & $371$ & $[-243085.4,-66346.4]$\tabularnewline
$52$ & $[-40.6,-17.4]$ & $212$ & $[-50249.4,-8311.4]$ & $372$ & $[-6182815,-554985.5]$\tabularnewline
$53$ & $[-61.3,-22.2]$ & $213$ & $[-72996.6,-10258.7]$ & $373$ & $[-8910225.9,-680273.4]$\tabularnewline
$54$ & $[-46.6,-14.1]$ & $214$ & $[-53015.4,-6127.2]$ & $374$ & $[-6419775.9,-403269.6]$\tabularnewline
$55$ & $[0.5,4.4]$ & $215$ & $[614.1,2839.6]$ & $375$ & $[73807.2,269196.4]$\tabularnewline
$56$ & $[20.8,51.1]$ & $216$ & $[9360.8,57634.8]$ & $376$ & $[609056.9,6877012.3]$\tabularnewline
$57$ & $[27.3,78.4]$ & $217$ & $[11562.6,83730.9]$ & $377$ & $[746531.8,9909577.2]$\tabularnewline
$58$ & $[15.9,56.9]$ & $218$ & $[6886.9,60747.8]$ & $378$ & $[442388.3,7138466.1]$\tabularnewline
$59$ & $[-5.6,-0.7]$ & $219$ & $[-3229.5,-704]$ & $379$ & $[-297970.9,-82064]$\tabularnewline
$60$ & $[-67.4,-26.9]$ & $220$ & $[-66098.3,-10557.1]$ & $380$ & $[-7645766.1,-668267]$\tabularnewline
$61$ & $[-100.3,-34.1]$ & $221$ & $[-95928.7,-13016.8]$ & $381$ & $[-11015274.8,-818877.5]$\tabularnewline
$62$ & $[-73.9,-20.2]$ & $222$ & $[-69602.1,-7758.4]$ & $382$ & $[-7934123,-485243.1]$\tabularnewline
$63$ & $[0.9,6.9]$ & $223$ & $[806.2,3669.7]$ & $383$ & $[91196.9,329665.3]$\tabularnewline
$64$ & $[33.8,86.6]$ & $224$ & $[11887.3,75688.9]$ & $384$ & $[732842.4,8495763.1]$\tabularnewline
$65$ & $[41.2,125.9]$ & $225$ & $[14640.8,109784.5]$ & $385$ & $[897840.2,12238059.4]$\tabularnewline
$66$ & $[26.1,95.6]$ & $226$ & $[8742.4,79683.3]$ & $386$ & $[532097.2,8814202.7]$\tabularnewline
$67$ & $[-8.5,-1.1]$ & $227$ & $[-4165.4,-922.1]$ & $387$ & $[-364562.7,-101294.7]$\tabularnewline
$68$ & $[-105.6,-39.4]$ & $228$ & $[-86494.3,-13342.2]$ & $388$ & $[-9434777,-803115.5]$\tabularnewline
$69$ & $[-158.1,-50]$ & $229$ & $[-125508.7,-16454.1]$ & $389$ & $[-13589672.5,-983977.3]$\tabularnewline
$70$ & $[-118.1,-30.9]$ & $230$ & $[-91046.8,-9815.6]$ & $390$ & $[-9786243.6,-583041.4]$\tabularnewline
$71$ & $[1.4,10.3]$ & $231$ & $[1053.5,4723.8]$ & $391$ & $[112454,402971.1]$\tabularnewline
$72$ & $[47.4,131.4]$ & $232$ & $[14972.6,98778.3]$ & $392$ & $[879826.6,10472686]$\tabularnewline
$73$ & $[60.9,197.9]$ & $233$ & $[18472,143326.9]$ & $393$ & $[1077915.2,15083042.6]$\tabularnewline
$74$ & $[35.7,144.3]$ & $234$ & $[10996.9,103889.1]$ & $394$ & $[638499.6,10859806.9]$\tabularnewline
$75$ & $[-12.6,-1.7]$ & $235$ & $[-5352.4,-1202.5]$ & $395$ & $[-445224.4,-124780.7]$\tabularnewline
$76$ & $[-166.7,-58.7]$ & $236$ & $[-112781.4,-16820.7]$ & $396$ & $[-11619752.9,-963644.4]$\tabularnewline
$77$ & $[-245.1,-73.1]$ & $237$ & $[-163509.9,-20720.2]$ & $397$ & $[-16732296.7,-1180325.3]$\tabularnewline
$78$ & $[-180.9,-44]$ & $238$ & $[-118514.3,-12339.5]$ & $398$ & $[-12046034.2,-699134.9]$\tabularnewline
$79$ & $[2.2,15.3]$ & $239$ & $[1371.2,6058.9]$ & $399$ & $[138391.1,491690.9]$\tabularnewline
$80$ & $[71.5,207.6]$ & $240$ & $[18866.2,128589.4]$ & $400$ & $[1054921.2,12885807.3]$\tabularnewline
$81$ & $[88,303.7]$ & $241$ & $[23224.3,186351.2]$ & $401$ & $[1291913.7,18552863.7]$\tabularnewline
$82$ & $[54.5,226.9]$ & $242$ & $[13851.3,135103.1]$ & $402$ & $[765299.3,13355659.7]$\tabularnewline
$83$ & $[-18.5,-2.7]$ & $243$ & $[-6852.9,-1561.9]$ & $403$ & $[-542770,-153412.2]$\tabularnewline
$84$ & $[-252.3,-83.9]$ & $244$ & $[-146370.2,-21107.4]$ & $404$ & $[-14282175.1,-1154163.9]$\tabularnewline
$85$ & $[-374.4,-105.7]$ & $245$ & $[-212166.7,-26005.4]$ & $405$ & $[-20561698.6,-1413469.6]$\tabularnewline
$86$ & $[-277.4,-64.5]$ & $246$ & $[-153752.4,-15499]$ & $406$ & $[-14799744.6,-837166.6]$\tabularnewline
$87$ & $[3.3,22.2]$ & $247$ & $[1777.5,7744.5]$ & $407$ & $[169982.8,598895.6]$\tabularnewline
$88$ & $[98.6,307.2]$ & $248$ & $[23610.9,166503.7]$ & $408$ & $[1262301.4,15822757.2]$\tabularnewline
$89$ & $[125.5,458.2]$ & $249$ & $[29096.9,241331]$ & $409$ & $[1545839.2,22777317.3]$\tabularnewline
$90$ & $[74.5,335.1]$ & $250$ & $[17311.3,174768.7]$ & $410$ & $[915315.8,16391969.9]$\tabularnewline
$91$ & $[-26.7,-4]$ & $251$ & $[-8744.3,-2020.8]$ & $411$ & $[-660544.3,-188255.2]$\tabularnewline
$92$ & $[-381.3,-120]$ & $252$ & $[-189335.6,-26427.4]$ & $412$ & $[-17522220.7,-1380239.1]$\tabularnewline
$93$ & $[-559.7,-149.1]$ & $253$ & $[-274250.3,-32531.9]$ & $413$ & $[-25219925.3,-1689914.4]$\tabularnewline
$94$ & $[-410.7,-89]$ & $254$ & $[-198598.1,-19360.3]$ & $414$ & $[-18148020.3,-1000584.5]$\tabularnewline
$95$ & $[4.8,31.8]$ & $255$ & $[2295.4,9865.4]$ & $415$ & $[208395.7,728233.8]$\tabularnewline
$96$ & $[143.8,466.6]$ & $256$ & $[29540,215041]$ & $416$ & $[1508517.7,19394877.6]$\tabularnewline
$97$ & $[177.3,681.9]$ & $257$ & $[36339.8,311365.3]$ & $417$ & $[1846688.6,27911693.9]$\tabularnewline
$98$ & $[108.8,506]$ & $258$ & $[21652.2,225510.7]$ & $418$ & $[1093469.5,20083360.3]$\tabularnewline
$99$ & $[-38,-5.9]$ & $259$ & $[-11121.6,-2604.9]$ & $419$ & $[-802525.7,-230585.7]$\tabularnewline
$100$ & $[-559.2,-166.5]$ & $260$ & $[-243894.9,-32952.8]$ & $420$ & $[-21456945,-1647815.2]$\tabularnewline
$101$ & $[-826.4,-209.1]$ & $261$ & $[-353191.8,-40564]$ & $421$ & $[-30876812.5,-2017225.6]$\tabularnewline
$102$ & $[-610.2,-127.4]$ & $262$ & $[-255704.3,-24153.1]$ & $422$ & $[-22214062.5,-1194271.5]$\tabularnewline
$103$ & $[7.1,45.3]$ & $263$ & $[2953.5,12527.5]$ & $423$ & $[255024.1,884035.8]$\tabularnewline
$104$ & $[195.6,675.4]$ & $264$ & $[36749.2,276440.1]$ & $424$ & $[1799369.7,23728109.5]$\tabularnewline
$105$ & $[246.1,998.8]$ & $265$ & $[45247.1,400288.2]$ & $425$ & $[2202648.2,34141657.9]$\tabularnewline
$106$ & $[146.4,729.8]$ & $266$ & $[26904.3,289644.2]$ & $426$ & $[1303742.5,24559497.8]$\tabularnewline
$107$ & $[-53.7,-8.6]$ & $267$ & $[-14100.3,-3345.8]$ & $427$ & $[-973432.1,-281927.8]$\tabularnewline
$108$ & $[-821.8,-233.5]$ & $268$ & $[-313192.6,-40996.5]$ & $428$ & $[-26229176.7,-1964392.8]$\tabularnewline
$109$ & $[-1204.9,-290]$ & $269$ & $[-453265.4,-50429.6]$ & $429$ & $[-37735131.2,-2404203.1]$\tabularnewline
$110$ & $[-883.2,-173.6]$ & $270$ & $[-327956.7,-29993.4]$ & $430$ & $[-27141794,-1422970]$\tabularnewline
$111$ & $[10.4,63.6]$ & $271$ & $[3787.1,15859.1]$ & $431$ & $[311533.3,1071439.9]$\tabularnewline
$112$ & $[274.7,989.7]$ & $272$ & $[45685,354463.2]$ & $432$ & $[2143632.9,28980695.9]$\tabularnewline
$113$ & $[339.7,1447.1]$ & $273$ & $[56166.2,512827.2]$ & $433$ & $[2623222.9,41688735.9]$\tabularnewline
$114$ & $[207.1,1068.1]$ & $274$ & $[33433.2,371076.7]$ & $434$ & $[1552655.7,29983077]$\tabularnewline
$115$ & $[-75.3,-12.5]$ & $275$ & $[-17823.4,-4283]$ & $435$ & $[-1178852.1,-344099.3]$\tabularnewline
$116$ & $[-1179.9,-318.6]$ & $276$ & $[-400684.4,-50822.8]$ & $436$ & $[-32005923.4,-2338060.5]$\tabularnewline
$117$ & $[-1734,-397.1]$ & $277$ & $[-579742.1,-62514]$ & $437$ & $[-46036807.1,-2861121.1]$\tabularnewline
$118$ & $[-1274.7,-240.5]$ & $278$ & $[-419360.6,-37192.4]$ & $438$ & $[-33106393.5,-1693242.8]$\tabularnewline
$119$ & $[14.9,88.8]$ & $279$ & $[4839.8,20016.9]$ & $439$ & $[379908,1296529.6]$\tabularnewline
$120$ & $[370.1,1407.5]$ & $280$ & $[56515.4,452638.9]$ & $440$ & $[2549305,35332552.2]$\tabularnewline
$121$ & $[464.3,2073.8]$ & $281$ & $[69526.9,654849.5]$ & $441$ & $[3119456.7,50816903.6]$\tabularnewline
$122$ & $[276.5,1514.1]$ & $282$ & $[41320.5,473481.6]$ & $442$ & $[1845741,36539182]$\tabularnewline
$123$ & $[-104.5,-17.8]$ & $283$ & $[-22464.8,-5464.7]$ & $443$ & $[-1425405.5,-419266.7]$\tabularnewline
$124$ & $[-1687.8,-434.8]$ & $284$ & $[-511105.4,-62862.9]$ & $444$ & $[-38989884.7,-2778942.2]$\tabularnewline
$125$ & $[-2471.1,-540.1]$ & $285$ & $[-739106.2,-77274.6]$ & $445$ & $[-56069935.7,-3399896.4]$\tabularnewline
$126$ & $[-1808.4,-323.5]$ & $286$ & $[-534349.7,-45929.9]$ & $446$ & $[-40312530.3,-2011568.1]$\tabularnewline
$127$ & $[21.2,122.8]$ & $287$ & $[6165.4,25193.7]$ & $447$ & $[462511.3,1566498.9]$\tabularnewline
$128$ & $[508.7,2013.9]$ & $288$ & $[69854.4,576582.9]$ & $448$ & $[3028055.7,43007373.3]$\tabularnewline
$129$ & $[627.5,2938.7]$ & $289$ & $[85831.9,833562.6]$ & $449$ & $[3704212.7,61840462.3]$\tabularnewline
$130$ & $[380.2,2159]$ & $290$ & $[51049.4,602654.1]$ & $450$ & $[2191663.7,44457785.1]$\tabularnewline
$131$ & $[-143.8,-25.1]$ & $291$ & $[-28235.1,-6950.6]$ & $451$ & $[-1720915.1,-510007.6]$\tabularnewline
$132$ & $[-2378.6,-585.1]$ & $292$ & $[-649748.7,-77504.8]$ & $452$ & $[-47418079,-3297997]$\tabularnewline
$133$ & $[-3487.9,-728.6]$ & $293$ & $[-939363,-95269]$ & $453$ & $[-68177152.3,-4034349.5]$\tabularnewline
$134$ & $[-2555.9,-439.4]$ & $294$ & $[-678960.8,-56639.5]$ & $454$ & $[-49007903,-2386700.7]$\tabularnewline
$135$ & $[29.8,168.2]$ & $295$ & $[7830,31622.8]$ & $455$ & $[562153.8,1889852.3]$\tabularnewline
$136$ & $[674.9,2812.4]$ & $296$ & $[85968,731761.6]$ & $456$ & $[3590878.9,52260758.5]$\tabularnewline
$137$ & $[842.9,4127.6]$ & $297$ & $[105677.4,1057808.8]$ & $457$ & $[4392390.7,75132988.8]$\tabularnewline
$138$ & $[503.1,3012]$ & $298$ & $[62770.5,764281.6]$ & $458$ & $[2598036.8,54001551.8]$\tabularnewline
$139$ & $[-196.6,-35.3]$ & $299$ & $[-35393.1,-8814]$ & $459$ & $[-2074616.7,-619384.7]$\tabularnewline
$140$ & $[-3338.1,-786.4]$ & $300$ & $[-823731.4,-95361.7]$ & $460$ & $[-57576618.5,-3908786.8]$\tabularnewline
$141$ & $[-4876.3,-974.2]$ & $301$ & $[-1190324.3,-117153.1]$ & $461$ & $[-82765628.9,-4780550.8]$\tabularnewline
$142$ & $[-3561.2,-582.8]$ & $302$ & $[-859939.1,-69592.2]$ & $462$ & $[-59482126.5,-2827480.8]$\tabularnewline
$143$ & $[41.6,229.1]$ & $303$ & $[9914.5,39587.6]$ & $463$ & $[682173.3,2276624.3]$\tabularnewline
$144$ & $[910.1,3941.8]$ & $304$ & $[105684.5,926473]$ & $464$ & $[4253259.4,63407543]$\tabularnewline
$145$ & $[1124.6,5749.7]$ & $305$ & $[129788.5,1338464.6]$ & $465$ & $[5201251.1,91137970.4]$\tabularnewline
$146$ & $[678.4,4210.9]$ & $306$ & $[77138.4,966956.6]$ & $466$ & $[3076317.8,65493940.3]$\tabularnewline
$147$ & $[-266.7,-49]$ & $307$ & $[-44251.3,-11144.6]$ & $467$ & $[-2497409.1,-751034.2]$\tabularnewline
$148$ & $[-4624.7,-1042.3]$ & $308$ & $[-1041065.2,-116985.8]$ & $468$ & $[-69800199.5,-4626133]$\tabularnewline
$149$ & $[-6763.2,-1294.5]$ & $309$ & $[-1503988.2,-143701.6]$ & $469$ & $[-100318436.6,-5657087.8]$\tabularnewline
$150$ & $[-4944.7,-778.7]$ & $310$ & $[-1086266.6,-85375]$ & $470$ & $[-72083798.5,-3345549.9]$\tabularnewline
$151$ & $[57.6,309.9]$ & $311$ & $[12518.4,49432.9]$ & $471$ & $[826528.4,2738641.3]$\tabularnewline
$152$ & $[1195.9,5427.6]$ & $312$ & $[129443.3,1169130.9]$ & $472$ & $[5030162.9,76808707.8]$\tabularnewline
$153$ & $[1488.1,7941.5]$ & $313$ & $[159014.4,1688817.3]$ & $473$ & $[6150820.5,110381427.8]$\tabularnewline
$154$ & $[887.6,5786.6]$ & $314$ & $[94402.7,1219371.2]$ & $474$ & $[3636948.3,79305801.7]$\tabularnewline
$155$ & $[-359.3,-67.5]$ & $315$ & $[-55187.6,-14052]$ & $475$ & $[-3002130.3,-909266]$\tabularnewline
$156$ & $[-6381.7,-1380.4]$ & $316$ & $[-1312317.8,-143222.3]$ & $476$ & $[-84490797,-5468131.9]$\tabularnewline
$157$ & $[-9309.6,-1709.2]$ & $317$ & $[-1895058.2,-175850.3]$ & $477$ & $[-121408020.7,-6685462.1]$\tabularnewline
$158$ & $[-6788.5,-1021.7]$ & $318$ & $[-1368142.9,-104404.6]$ & $478$ & $[-87220473.3,-3952858.6]$\tabularnewline
$159$ & $[79.1,416.3]$ & $319$ & $[15762.8,61575.6]$ & $479$ & $[999910.2,3289835.9]$\tabularnewline
\end{longtable}
\par\end{center}

\end{document}